  \numberwithin{equation}{section}
  \newcommand{\N}{\mathbb{N}}         % natural numbers
  \newcommand{\Z}{\mathbb{Z}}         % integers
  \newcommand{\R}{\mathbb{R}}         % real line
  \newcommand{\C}{\mathbb{C}}         % complex numbers
  \newcommand{\PP}{\mathbb{P}}         % simplex
  \newcommand{\supp}{\text{supp}}        % Support
  \newcommand{\la}{\langle}
  \newcommand{\ra}{\rangle}
  \newcommand{\hdim}{\dim_{\mathsf{H}}}
  \newcommand{\fdim}{\dim_{\mathsf{F}}}
\newcommand{\iii}{\mathtt{i}}
\newcommand{\jjj}{\mathtt{j}}
  \newcommand{\e}{\varepsilon}
  \newcommand{\lam}{\lambda}
  \newcommand{\eps}{\varepsilon}
  \newcommand{\be}{\begin{equation}}
  \newcommand{\ee}{\end{equation}}
  \newcommand{\ba}{\begin{align}}
  \newcommand{\ea}{\end{align}}
  \newcommand{\bas}{\begin{align*}}
  \newcommand{\eas}{\end{align*}}
  \newcommand{\cI}{\mathcal{I}}    % IFS
  \newcommand{\cS}{\mathcal{S}}    % space of contracting similarities
  \newcommand{\cP}{\mathcal{P}}    % Borel prob measures
  \newcommand{\cD}{\mathcal{D}}    % Measures with power Fourier decay
  \newcommand{\cH}{\mathcal{H}}
  \newcommand{\cL}{\mathcal{L}}    % Lebesgue measure
  \newcommand{\wh}{\widehat}
  \newcommand{\wc}{\widecheck}
  \newtheorem{thm}{Theorem}[section]
  \newtheorem{mainthm}{Theorem}
  \newtheorem{lemma}[thm]{Lemma}
  \newtheorem{prop}[thm]{Proposition}
  \newtheorem{cor}[thm]{Corollary}
  \theoremstyle{remark}
  \newtheorem{rem}[thm]{Remark}
  \newtheorem{rems}[thm]{Remarks}
  \newtheorem{defn}[thm]{Definition}
\begin{document}

\title[Absolute continuity of self-similar measures]{Absolute continuity of self-similar measures, their projections and convolutions}

\author{Pablo Shmerkin}
\address[P. Shmerkin]{Department of Mathematics and Statistics\\
Torcuato di Tella University\\
Av. Figueroa Alcorta 7350 (1425), Buenos Aires\\
Argentina}
\email{pshmerkin@utdt.edu}
\thanks{P.S. was supported in part by Project PICT 2011-0436 (ANPCyT)}

\author{Boris Solomyak}
\address[B. Solomyak]{Department of Mathematics\\
University of Washington\\
Box 354350, Seattle\\
WA 98195-4350\\
USA}
\email{solomyak@math.washington.edu}

\subjclass[2010]{Primary 28A78, 28A80, secondary 37A45, 42A38}
\keywords{absolute continuity, self-similar measures, Hausdorff dimension, convolutions}
\thanks{B.S. was supported in part by NSF grant DMS-0968879, and by the Forschheimer Fellowship
and ERC AdG 267259 grant at the Hebrew University of Jerusalem}

\begin{abstract}
We show that in many parametrized families of self-similar measures, their projections, and their convolutions, the set of parameters for which the measure fails to be absolutely continuous is very small - of co-dimension at least one in parameter space. This complements an active line of research concerning similar questions for dimension. Moreover, we establish some regularity of the density outside this small exceptional set, which applies in particular to Bernoulli convolutions; along the way, we prove some new results about the dimensions of self-similar measures and the absolute continuity of the convolution of two measures. As a concrete application, we obtain a very strong version of Marstrand's projection theorem for planar self-similar sets.
\end{abstract}

\maketitle

\section{Introduction and main results}

\subsection{Introduction}

One of the most natural questions one can ask about a measure on Euclidean space is: is it absolutely continuous with respect to Lebesgue measure? If the answer is affirmative, one would like to gain some information about its density. Basic as it is, the question of absolute continuity is very hard to answer for measures of dynamical or arithmetic origin whose construction involves complicated overlaps. One of the most fascinating examples is the family of \emph{Bernoulli convolutions} $\nu_\lam$, defined as the distribution of the random sum $\pm \lam^n$, where the signs are chosen independently with equal probabilities. For $\lam\in (0,1/2)$ it is well-known and easy to see that $\nu_\lam$ is singular, as it is supported on a Cantor set of Hausdorff dimension less than $1$. For $\lam\in (1/2,1)$, Erd\H{o}s \cite{Erdos39} already in 1939 exhibited a countable set $P$ (the reciprocals of Pisot numbers in $(1,2)$) such that $\nu_\lam$ is singular for $\lam\in P$. Up to today it is not known if these are the only parameters for which $\nu_\lam$ fails to be absolutely continuous. Solomyak \cite{Solomyak95} showed that the exceptional set is small: for Lebesgue almost all $\lam\in (1/2,1)$, the measure $\nu_\lam$ is absolutely continuous with an $L^2$ density. Very recently, Shmerkin \cite{Shmerkin13} showed that indeed $\nu_\lam$ is absolutely continuous for all $\lam$ outside of an exceptional set of Hausdorff dimension zero (no new information about densities was obtained).

The goal of this article is to show that the method of \cite{Shmerkin13}, with suitable extensions and modifications, can be adapted to show that, for a number of natural parametrized families of measures on the real line, the set of ``exceptional'' parameters for which the measure is singular is very small. By ``very small'', we mean a bound on the Hausdorff dimension which is smaller by at least $1$ than the dimension of the parameter space. Some of the families to which our results apply include:
\begin{enumerate}
 \item Fairly general parametrized families of homogeneous self-similar measures on the line (which include Bernoulli convolutions as a special case).
 \item Projections of homogeneous self-similar measures on the plane (possibly containing a scaled irrational rotation).
 \item Convolutions of scaled homogeneous self-similar measures on the line (the parameter comes in the scaling).
\end{enumerate}

These results have immediate applications to the Lebesgue measure of self-similar sets, their projections and arithmetic sums. Moreover, we are able to get some information about the densities - this is new even in the case of Bernoulli convolutions. Along the way, we establish a new criterion for the absolute continuity of the convolution of two measures, and prove a partial continuity result for the $L^q$ dimensions of some self-similar measures. These results may be of independent interest.

In a forthcoming article, we obtain analogous results for families of parametrized families in the plane, including complex Bernoulli convolutions (see \cite{SolomyakXu03} and references therein), and fat Sierpi\'{n}ski carpets.

Before stating our results more precisely, we review some definitions and set up some notation. In this article, all measures are understood to be Borel probability measures on some Euclidean space $\R^d$; the space of all of them is denoted by $\cP_d$. The \emph{(lower) Hausdorff dimension} of a measure $\mu\in\cP_d$ is
\[
\hdim \mu = \inf\{\hdim A:\mu(A)>0\},
\]
where $\hdim A$ is the Hausdorff dimension of the set $A$. It is then clear that if $\hdim\mu<d$, then $\mu$ is necessarily singular. When studying parametrized families $\{ \mu_u\}_{u\in U}$ it may happen that $\hdim\mu_u<d$ for obvious reasons for all or part of the parameter space. Although at present this is a vague concept, we refer to this part of the parameter space as the \emph{sub-critical} regime, and we are not interested in it as the measures are then automatically singular. Therefore all of our results will contain an assumption that will ensure that we are in the \emph{super-critical} regime, in which there is at least a chance of obtaining absolute continuity.

Our parametrized families will consist of self-similar measures, or measures constructed from self-similar measures through the geometric operations of projection and convolution. As corollaries, we will obtain also results for self-similar sets and their projections and arithmetic sums. Hence, we review their definition and some of their properties, and set up notation along the way.

Recall that an \emph{iterated function system (IFS)} $\cI$ is a finite collection $\{ f_1,\ldots, f_m\}$ of strictly contractive maps on some Euclidean space $\R^d$. In this article, the maps $f_i$ will always be similarities and this will be assumed from now on. The IFS is termed \emph{homogeneous} if the linear parts of the maps $f_i$ are all equal. It is well-known that there exists a unique nonempty compact set $A=A(\cI)\subset\R^d$, the \emph{attractor} or \emph{invariant set} for $\cI$, such that $A=\bigcup_{i=1}^m  f_i(A)$. The set $A$ is then a \emph{self-similar set}. We say that the \emph{open set condition} holds for $\cI$ if there is a nonempty open set $O$ such that the sets $f_i(O)$ are pairwise disjoint and contained in $O$, and that the \emph{strong separation condition} holds if the sets $f_i(A)$ are pairwise disjoint; the latter condition is stronger.

We denote the open simplex in $\R^m$ by $\PP_m$, i.e.\ $\mathbb{P}_m=\{ (p_1,\ldots,p_m): p_i> 0,\ \sum_{i=1}^m p_i=1\}$. We think of elements of the simplex as probability vectors. Given an IFS $(f_1,\ldots,f_m)$ and a vector $p\in\PP_m$, there is a unique measure $\mu\in\cP_d$ such that
\[
\mu = \sum_{i=1}^m p_i\, f_i\mu,
\]
where we use the notation $f\mu$ to denote the push-forward measure $f\mu(A)=\mu(f^{-1}A)$.

In this article we consider mostly \emph{homogeneous} iterated function systems on $\R^d$ of the form
\[
 \cI = \cI_{T,a} = \{ T x + a_1,\ldots, T x + a_m\},
\]
where $T\in\cS_d$, the family of invertible contractive similarities on $\R^d$, and $a=(a_1,\ldots,a_m)\in\R^{md}$ is a tuple of translation  vectors. We identify $\cS_1$ with $(-1,0)\cup (0,1)$, and $\cS_2$ with $\{ z\in \C: 0 < |z| < 1\}$. The invariant set for $\cI_{T,a}$ will be denoted by $A(T,a)$, and the invariant measure for $\cI_{T,a}$ with weights $p\in \PP_m$ by $\mu(T,a,p)$.

Given a homogeneous IFS $\cI =  (T x+a_i)_{i=1}^m$, its \emph{similarity dimension} is $s(T,m)=\log m/\log(1/r)$, where $r\in (0,1)$ is the contraction ratio of $T$. Note that $s(T,m)$ is  independent of $a$. If a weight $p$ is also given, then the similarity dimension is $s(T,m,p)=h(p)/\log(1/r)$, where $h(p)=-\sum_{i=1}^m p_i\log p_i$ is the \emph{entropy} of $p$. It is well known that $0\le h(p)\le m$ with $h(p)=m$ if and only if $p=(\frac1m,\ldots,\tfrac1m)$. Therefore, $s(T,m)=\sup_{p\in\PP_m} s(T,m,p)$ and the supremum is attained exactly when $p=(\frac1m,\ldots,\tfrac1m)$.

The following result is well known, see e.g. \cite[Corollary 5.2.3 and Theorem 5.2.5]{Edgar98} for a proof.

\begin{prop} \label{prop:sim-dim}
\begin{enumerate}[(i)]
\item For any $T\in\cS_d, a\in\R^{md}$ and $p\in \PP_m$, we have the inequalities $\hdim(A(T,a))\le s(T,m)$ and $\hdim(\mu(T,a,p))\le s(T,m,p)$.
\item If the open set condition holds, then there are equalities $\hdim(A(T,a))= s(T,m)$ and $\hdim(\mu(T,a,p))= s(T,m,p)$.
\end{enumerate}
\end{prop}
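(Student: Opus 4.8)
The plan is to derive the two inequalities in (i) from the natural symbolic cover of the attractor, and the two equalities in (ii) from the mass distribution principle, with the open set condition entering only in the latter.

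For the upper bounds, given a word $\iii=i_1\cdots i_n\in\{1,\dots,m\}^n$ write $f_\iii=f_{i_1}\circ\cdots\circ f_{i_n}$, a similarity of ratio $r^n$. From the self-similarity relation $A=\bigcup_{|\iii|=n}f_\iii(A)$ one gets, for each $n$, a cover of $A$ by $m^n$ sets of diameter $r^n\diam(A)\to 0$. Choosing $s=s(T,m)$, so that $mr^s=1$, the sum $\sum_{|\iii|=n}(\diam f_\iii(A))^s=(\diam A)^s$ stays bounded, whence $\cH^s(A)<\infty$ and $\hdim A\le s(T,m)$. For the measure, realize $\mu=\mu(T,a,p)$ as $\pi\eta$, where $\eta=p^{\N}$ is the Bernoulli measure on $\{1,\dots,m\}^{\N}$ and $\pi$ the associated coding map, and invoke the standard identity $\hdim\mu=\operatorname{ess\,inf}_x\ldimloc(\mu,x)$ relating the Hausdorff dimension of a measure to its lower local dimension. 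For $\eta$-a.e.\ $\omega$, set $x=\pi(\omega)$ and $\rho_n=r^n\diam(A)$; since $f_{\omega_1\cdots\omega_n}(A)\subset B(x,\rho_n)$ we have $\mu(B(x,\rho_n))\ge p_{\omega_1}\cdots p_{\omega_n}$, and the strong law of large numbers gives $-\tfrac1n\log(p_{\omega_1}\cdots p_{\omega_n})\to h(p)$. Dividing by $\log\rho_n\sim n\log r$ therefore forces $\ldimloc(\mu,x)\le h(p)/\log(1/r)=s(T,m,p)$ for a.e.\ $x$, and hence $\hdim\mu\le s(T,m,p)$.

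For the lower bounds I would apply the mass distribution principle: if a measure $\nu$ supported on $A$ satisfies $\nu(B(x,\rho))\le C\rho^{t}$ for all $x$ and all small $\rho$, then $\hdim A\ge t$, and more generally such a pointwise bound forces $\ldimloc(\nu,x)\ge t$. For the set I take $\nu=\mu(T,a,p_0)$ with $p_0=(1/m,\dots,1/m)$, so that $\nu(f_\iii(A))=m^{-n}=(r^n)^{s(T,m)}$ for every word of length $n$; for the measure I take $\nu=\mu$ itself. In either case, given $\rho$ choose $n$ with $r^n\asymp\rho$ and estimate $\nu(B(x,\rho))$ by the total $\nu$-mass of the generation-$n$ cylinders $f_\iii(A)$ that meet $B(x,\rho)$. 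For the set this is at most (number of such cylinders)$\times m^{-n}\asymp$ (number)$\times\rho^{s(T,m)}$, and an analogous computation using the varying masses $p_{i_1}\cdots p_{i_n}$ together with the strong law of large numbers yields $\ldimloc(\mu,x)=s(T,m,p)$ for $\mu$-a.e.\ $x$.

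The decisive step, and the only place the open set condition is used, is the geometric bounded-overlap estimate: there is a constant $N$, depending on $\cI$ alone, such that every ball of radius $\rho$ meets at most $N$ of the cylinders $f_\iii(A)$ with $|\iii|=n$ and $r^n\asymp\rho$. I expect this to be the main obstacle. It is proved by fixing the open set $O$ from the definition, observing that the sets $f_\iii(O)$, $|\iii|=n$, are pairwise disjoint, contained in a bounded neighbourhood of the ball, and of Lebesgue measure comparable to $\rho^d$, and then bounding their number by a volume/packing comparison. Granting this estimate, the counts in the previous paragraph are bounded by $N$, and the mass distribution principle delivers the matching lower bounds, completing (ii). The homogeneity of $\cI$ is a convenience here: all cylinders of a given length share the same contraction ratio $r^n$, so no stopping-time or antichain bookkeeping is needed to produce cylinders of comparable size.
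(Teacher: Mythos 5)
The paper gives no proof of this proposition at all --- it is quoted as classical with a pointer to Edgar's book --- so the comparison can only be with the standard arguments. Your part (i) and the set statement in part (ii) are correct and follow exactly the classical route: the generation-$n$ symbolic cover gives $\cH^{s(T,m)}(A)\le(\diam A)^{s(T,m)}<\infty$; the cylinder $f_{\omega_1\cdots\omega_n}(A)$ containing $x=\Pi(\omega)$ lies in $B(x,r^n\diam A)$, so $\mu(B(x,\rho_n))\ge p_{\omega_1}\cdots p_{\omega_n}$ and the strong law bounds the lower local dimension by $h(p)/\log(1/r)$; and Hutchinson's volume-packing count of the disjoint sets $f_\iii(O)$ near a ball gives the bounded-overlap constant $N$. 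For the set lower bound it is cleaner to use the iterated identity $\nu=\sum_{|\iii|=n}m^{-n}f_\iii\nu$, which gives $\nu(B(x,\rho))\le N m^{-n}=O(\rho^{s(T,m)})$ directly, rather than the claim $\nu(f_\iii(A))=m^{-n}$, which itself presupposes that the overlaps are $\nu$-null.

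The genuine gap is in the measure lower bound of part (ii), precisely at the step you dispatch as ``an analogous computation \dots together with the strong law of large numbers.'' The bounded-overlap lemma yields $\mu(B(x,\rho_n))\le\sum_{\iii\in\mathcal{A}_n(x)}p_{\iii}$, where $\mathcal{A}_n(x)$ consists of at most $N$ words of length $n$; but the strong law controls only the weight $p_{\omega_1}\cdots p_{\omega_n}$ of the single cylinder coding $x$, and says nothing about the weights of the up to $N-1$ neighbouring cylinders that also meet the ball. For non-uniform $p$ a neighbour can a priori carry mass as large as $(\max_i p_i)^n$, which is exponentially larger than $2^{-nh(p)}$, so the desired bound $\mu(B(x,\rho_n))\le r^{n(s(T,m,p)-\e)}$ does not follow. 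Under the strong separation condition the issue vanishes, since a ball of radius much smaller than $r^n$ meets only one generation-$n$ cylinder; under the open set condition alone this neighbour-mass control is the actual crux of the proof and needs a further argument (a Borel--Cantelli/density estimate showing that for $\mu$-a.e.\ $x$ the adjacent cylinders eventually carry comparable mass, or the exact-dimensionality route of Feng--Hu combined with part (i)). Relatedly, you flag the bounded-overlap lemma as ``the main obstacle,'' but that is the routine volume comparison; the hard point is the one just described, which your sketch does not address.
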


\subsection{Parametrized families of self-similar measures}

Our first result concerns real-analytic parametrized families of self-similar measures on the real line. See Section \ref{subsec:projection-IFS} below for the definition of the coding map.

\begin{mainthm} \label{mainthm:parametrized}
Let $\{ (\lam_u,a_u) \}_{u\in U}$ be a real-analytic map $\R^\ell\supset U\to \cS_1\times \R^m$ such that the following non-degeneracy condition holds: for any distinct infinite sequences $\iii,\jjj\in \{1,\ldots,m\}^\N$, there is a parameter $u$ such that $\Pi_u(\iii)\neq \Pi_u(\jjj)$, where $\Pi_u$ is the projection map for the IFS $(\lam_u x+ a_{u,i})_{i\in [m]}$.

Then there exists a set $E\subset U$ of Hausdorff dimension at most $\ell-1$, such that for any $u\in U\setminus E$ and any $p\in\PP_m$ such that $s(\lam_u,m,p)>1$, the measure $\mu(\lam_u,a_u,p)$ is absolutely continuous with respect to Lebesgue measure on the line, and has a density in $L^q$ for some $q=q(u,p)>1$.

In particular, $A(\lam_u,a_u)\subset\R$ has positive Lebesgue measure for all $u\in U\setminus E$ such that $\lambda_u m>1$.
\end{mainthm}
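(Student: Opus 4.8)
The plan is to reduce Theorem~\ref{mainthm:parametrized} to two independent engines, bridged by the convolution structure of homogeneous self-similar measures on the line. Fix $p\in\PP_m$, write $f_{u,i}(x)=\lambda_u x+a_{u,i}$ and $\mu_u=\mu(\lambda_u,a_u,p)$. Iterating the self-similarity relation $\mu_u=\sum_i p_i f_{u,i}\mu_u$ gives the infinite convolution
\[
  \mu_u \;=\; \nu_{u,0} * \nu_{u,1} * \nu_{u,2} * \cdots, \qquad \nu_{u,n} = \sum_{i=1}^m p_i\, \delta_{\lambda_u^{\,n} a_{u,i}} .
\]
Fixing a large integer $g$ and splitting the digit set, I factor $\mu_u=\alpha_u * \beta_u$, where $\beta_u = *_{\,n\in g\N}\,\nu_{u,n}$ is the self-similar measure of the IFS with ratio $\lambda_u^g$ (hence of small but positive dimension $\approx s(\lambda_u,m,p)/g$), while $\alpha_u = *_{\,n\notin g\N}\,\nu_{u,n}$ carries almost all digits and has symbolic growth rate $\approx (1-1/g)\,s(\lambda_u,m,p)$. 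Since we assume $s(\lambda_u,m,p)>1$, choosing $g>s/(s-1)$ makes $\alpha_u$ \emph{super-critical}, so that this rate exceeds $1$. The two engines are: (i) a no-dimension-drop statement giving $D_q(\alpha_u)=1$ for all $u$ outside a set of dimension $\le\ell-1$ and all small $q>1$; and (ii) a convolution criterion upgrading this to a genuine density. Here $D_q(\eta)=\tau_\eta(q)/(q-1)$ is the $L^q$ dimension, with $\tau_\eta(q)=\liminf_n\big(-\tfrac1n\log_2\sum_I \eta(I)^q\big)$, the sum over dyadic intervals $I$ of length $2^{-n}$.

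Engine (i) is the heart of the matter and the step I expect to be the \emph{main obstacle}. The $L^q$-similarity dimension $s_q$ of $\alpha_u$ decreases continuously to its symbolic growth rate as $q\downarrow1$; by super-criticality there is a range $q\in(1,q_0)$ on which $s_q>1$, so $\min(1,s_q)=1$. One then wants $D_q(\alpha_u)=\min(1,s_q)=1$ there, i.e. no $L^q$-dimension drop. I would obtain this from an $L^q$ version of Hochman's inverse theorem: a drop forces the cylinder separation
\[
  \Delta_n(u) \;=\; \min\big\{\, |\Pi_u(\iii)-\Pi_u(\jjj)| \;:\; \iii,\jjj\in[m]^n,\ \iii\neq\jjj \,\big\}
\]
(and its analogue for the factor $\alpha_u$) to decay super-exponentially along a subsequence, $-\tfrac1n\log\Delta_n(u)\to\infty$. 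The crucial point, and where real-analyticity is indispensable, is the conversion of this concentration into a sharp \emph{Hausdorff dimension} bound: for each pair $\iii\neq\jjj$ the map $u\mapsto \Pi_u(\iii)-\Pi_u(\jjj)$ is real-analytic and, by the non-degeneracy hypothesis, not identically zero, so its zero set is a proper real-analytic variety of dimension $\le\ell-1$; super-exponential smallness forces $u$ super-exponentially close to these varieties, and a covering argument (exploiting that super-exponential smallness beats the exponential number of pairs, together with {\L}ojasiewicz-type control of analytic zero sets) bounds the bad set $E$ by $\hdim E\le\ell-1$. Note that $\Delta_n(u)$ does not involve $p$, so the \emph{same} $E$ serves all weights simultaneously. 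Making the inverse theorem quantitative and $q$-uniform, and forcing the transversality argument to deliver dimension (not merely measure) $\ell-1$, is the delicate part; the partial continuity of $L^q$ dimensions lets me spread full $L^q$ dimension over a $q$-interval and onto the factor $\alpha_u$.

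Engine (ii) then closes the argument. The new convolution criterion asserts that if $\alpha$ has full $L^q$ dimension $D_q(\alpha)=1$ and $\beta$ has positive dimension, then $\alpha*\beta$ is absolutely continuous with a density in $L^{q'}$ for some $q'>1$; heuristically (at $q=2$) the factor $\widehat{\beta}$ supplies the polynomial decay needed to turn the merely sub-polynomial energy growth of $\alpha$ into an integrable one, which is why a full-dimension \emph{factor} is required rather than merely $D_q(\mu_u)=1$. Applying this to $\mu_u=\alpha_u*\beta_u$ for $u\in U\setminus E$ yields that $\mu_u$ is absolutely continuous with a density in $L^{q'}$ for some $q'=q'(u,p)>1$, which is the first assertion. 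Finally, the ``in particular'' follows by specializing to the uniform weight $p=(\tfrac1m,\dots,\tfrac1m)$: then $s(\lambda_u,m,p)=s(\lambda_u,m)=\log m/\log(1/|\lambda_u|)$, so $|\lambda_u|m>1$ gives $s(\lambda_u,m,p)>1$ and hence $\mu(\lambda_u,a_u,p)$ is absolutely continuous for $u\in U\setminus E$. Since all weights are positive, the support of this measure equals the attractor $A(\lambda_u,a_u)$, and an absolutely continuous probability measure supported there forces $\leb(A(\lambda_u,a_u))>0$.
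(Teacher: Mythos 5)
Your decomposition $\mu_u=\alpha_u*\beta_u$ by splitting off every $g$-th digit is exactly the paper's (Section \ref{subsec:decomposition-large-and-small-part}), and the two-engine shape is right, but Engine (ii) has a fatal gap: positive \emph{dimension} of the factor $\beta_u$ is not a usable hypothesis. The convolution criterion actually proved (Theorem \ref{thm:abs-cont-convolution} / Corollary \ref{cor:convolution-with-density-in-Lq}) requires $\beta_u$ to have positive \emph{Fourier} dimension, i.e.\ power decay $|\widehat{\beta_u}(\xi)|=O(|\xi|^{-\sigma})$, and positive Hausdorff dimension does not imply this: $\beta_u=\mu(\lam_u^g,a_u,p)$ is itself a homogeneous self-similar measure, and for particular parameters (e.g.\ $\lam_u^{-g}$ Pisot with integer translations) its Fourier transform does not even tend to zero, while its Hausdorff dimension stays positive. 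Producing the decay for all $u$ outside a further exceptional set of Hausdorff dimension $\le\ell-1$ is precisely the content of the Erd\H{o}s--Kahane arguments (Proposition \ref{prop:EK-translations} combined with Lemma \ref{lem:analytic-preimage} when $\lam_u$ is constant --- which also forces the case distinction $m\ge 3$ versus $m=2$ --- and Watanabe's theorem when $\lam_u$ varies); the paper singles this flexibility out as one of its main innovations, and your proposal omits it entirely. Without it the convolution criterion simply cannot be invoked.

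Engine (i) also overshoots what is available and what is needed. You appeal to a parametric ``$L^q$ version of Hochman's inverse theorem'' yielding $D_q(\alpha_u)=1$ off a set of dimension $\le\ell-1$; no such theorem is proved or cited here (it is a substantially harder statement than Hochman's entropy result), and the sketch via superexponential concentration near analytic varieties plus a {\L}ojasiewicz-type covering is not a proof. The paper deliberately avoids needing full $L^q$ dimension: it applies Theorem \ref{thm:dimension-parametrized} only for Hausdorff (equivalently entropy) dimension to get $\hdim\nu_{u,p,k}=1$ off $E'_k$, and then uses the new continuity result $D_1\mu=\lim_{q\to1^+}D_q\mu$ for homogeneous self-similar measures (Theorem \ref{thm:Lq-dim-continuity}, via the sharpened submultiplicativity of Proposition \ref{prop:Lq-dim-submultiplicative}) to conclude $D_q(\nu_{u,p,k})>1-\sigma$ for $q$ close to $1$, which is all that Theorem \ref{thm:abs-cont-convolution} requires. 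Your closing steps (uniform weights for the ``in particular'' clause, $p$-independence of the exceptional set) are fine, but as written the two central inputs of your argument are, respectively, unavailable and insufficient.
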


\begin{rems}
\begin{enumerate}
\item Hochman \cite{Hochman13} proved a similar result, with ``$\mu(\lam_u,a_u,p)$ has full dimension'' in place of ``$\mu(\lam_u,a_u,p)$ is absolutely continuous'', but valid for arbitrary self-similar measures (not necessarily homogeneous). Our proof of Theorem \ref{mainthm:parametrized} relies heavily on his result.
\item This result generalizes \cite[Theorem 1.2]{Shmerkin13}, which is the special case in which $\ell=1$, the parameter is the contraction ratio $\lam$ and the translations are fixed. One novelty is the fact that the \emph{translations} are now allowed to depend on the parameter as well. Indeed, the situation in which the contraction $\lam$ is fixed and the translations vary arises naturally when studying projections of self-similar measures on $\R^2$; this will be exploited later.
\item The second novelty, even for the family of Bernoulli convolutions, is that we are able to prove that, outside of a zero-dimensional set of exceptions, the densities are in some $L^q$ space with $q>1$. The value of $q$ depends on the parameter and is not explicit, but nevertheless it is some quantitative information about the density.
\item By applying Theorem \ref{mainthm:parametrized} to the identity map of $U=(0,1)\times \R^m$, we obtain in particular that there exists a set $E\subset U$ of Hausdorff dimension $\le m$, such that if $(\lam,a)\in U\setminus E$ and $p\in\PP_m$ is such that $h(p)>|\log\lam|$, then $\mu(\lam,a,p)$ is absolutely continuous.
\item In general, the bound $\ell-1$ on the dimension of the exceptional set is sharp; in short, this is because ``exact overlaps'' are often a co-dimension $1$ set in parameter space, and they may lead to the similarity dimension dropping below $1$.
\end{enumerate}
\end{rems}

\subsection{Orthogonal projections of self-similar measures}

We now deal with the orthogonal projections of a fixed self-similar measure (or set) in $\R^2$. Let $P_\beta:\R^2\to\R^2$ be the orthogonal projection onto a line making an angle $\beta$ with the $x$-axis. We may then enquire about the absolute continuity of $P_\beta\mu$ as $\beta$ varies. If $\hdim\mu<1$, then also $\hdim(P_\beta\mu)\le \hdim\mu<1$, and therefore $P_\beta\mu$ is singular for all $\beta$: this is the subcritical regime. The supercritical regime corresponds to $\hdim\mu>1$ (the \emph{critical} case $\hdim\mu=1$ is of different nature and will not be considered here). In this case, we have the following classical general result:

\begin{prop} \label{prop:marstrand}
\begin{enumerate}[(i)]
\item Let $A\subset\R$ be a Borel set. If $\hdim A\le 1$, then $\hdim P_\beta A=\hdim A$ for almost all $\beta$, while if $\hdim A>1$, then $P_\beta A$ has positive Lebesgue measure for Lebesgue almost all $\beta\in [0,\pi)$.
\item Let $\mu\in\cP_2$. If $\hdim\mu\le 1$, then $\hdim P_\beta \mu= \hdim \mu$ for almost all $\beta$, while if $\hdim\mu>1$, then $P_\beta\mu$ is absolutely continuous for Lebesgue almost all $\beta\in [0,\pi)$.
\end{enumerate}
\end{prop}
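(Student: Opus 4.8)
The plan is to establish the measure statement (ii) by the potential-theoretic method and to deduce the set statement (i) from it using Frostman's lemma. I identify the image line of $P_\beta$ with $\R$ through the unit vector $e_\beta=(\cos\beta,\sin\beta)$, so that $P_\beta x=\langle x,e_\beta\rangle$ and $\wh{P_\beta\mu}(t)=\wh\mu(t\,e_\beta)$. The engine throughout is the $s$-energy $I_s(\mu)=\iint|x-y|^{-s}\,d\mu(x)\,d\mu(y)$, together with the two standard facts that $I_s(\mu)<\infty$ forces $\hdim\mu\ge s$, and that $I_s(\mu)=c_{s}\int_{\R^2}|\wh\mu(\xi)|^2|\xi|^{s-2}\,d\xi$.

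\emph{Supercritical case $\hdim\mu>1$.} Since $\hdim\mu=\operatorname{essinf}_x\liminf_{r\to0}\log\mu(B(x,r))/\log r$, for any $s'$ with $1<s'<\hdim\mu$ the sets $A_N=\{x:\mu(B(x,r))\le r^{s'}\text{ for all }r\le 1/N\}$ exhaust $\mu$, so I may write $\mu$ as a countable sum of restrictions $\nu=\mu|_{B}$, each satisfying $\nu(B(x,r))\le C r^{s'}$ and hence $I_t(\nu)<\infty$ for some $t\in(1,s')$. For such a $\nu$ I claim $P_\beta\nu\in L^2$ for a.e.\ $\beta$: integrating in $\beta$ and passing to polar coordinates $\xi=t\,e_\beta$ (Jacobian $|t|$),
\[
\int_0^\pi\!\int_\R|\wh{P_\beta\nu}(t)|^2\,dt\,d\beta=\int_{\R^2}\frac{|\wh\nu(\xi)|^2}{|\xi|}\,d\xi,
\]
and splitting at $|\xi|=1$ the low frequencies contribute a finite amount because $|\wh\nu|\le1$ and $|\xi|^{-1}$ is locally integrable in the plane, while for $|\xi|>1$ the bound $|\xi|^{-1}\le|\xi|^{t-2}$ (valid since $t>1$) identifies the tail with $I_t(\nu)<\infty$. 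Thus $P_\beta\nu$ has an $L^2$ density off a null set of $\beta$; taking the union over the countably many pieces and using that a total-variation limit of absolutely continuous measures is absolutely continuous, $P_\beta\mu$ is absolutely continuous for a.e.\ $\beta$.

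\emph{Case $\hdim\mu\le1$.} The bound $\hdim P_\beta\mu\le\hdim\mu$ holds for every $\beta$ because $P_\beta$ is $1$-Lipschitz. For the reverse inequality fix $s<\hdim\mu$ (so $s<1$) and decompose $\mu$, exactly as above, into countably many restrictions $\nu_k$ with $I_s(\nu_k)<\infty$. For each $k$,
\[
\int_0^\pi I_s(P_\beta\nu_k)\,d\beta=\iint\Big(\int_0^\pi\frac{d\beta}{|\langle x-y,e_\beta\rangle|^s}\Big)\,d\nu_k(x)\,d\nu_k(y)=c_s\,I_s(\nu_k),
\]
where $c_s=\int_0^\pi|\cos\phi|^{-s}\,d\phi<\infty$ precisely because $s<1$. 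Hence $I_s(P_\beta\nu_k)<\infty$, and so $\hdim P_\beta\nu_k\ge s$, for a.e.\ $\beta$. Since the lower Hausdorff dimension of a countable sum is the infimum of the summands' dimensions, $\hdim P_\beta\mu\ge s$ for a.e.\ $\beta$; letting $s\uparrow\hdim\mu$ through a sequence gives $\hdim P_\beta\mu\ge\hdim\mu$, hence equality, for a.e.\ $\beta$.

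Finally I deduce (i). If $\hdim A\le1$, the upper bound again follows from $P_\beta$ being Lipschitz, and for the lower bound Frostman's lemma supplies, for each $s<\hdim A$, a measure $\mu$ on $A$ with $I_s(\mu)<\infty$; applying the dimension part of (ii) together with $\operatorname{supp}(P_\beta\mu)\subseteq P_\beta A$ gives $\hdim P_\beta A\ge s$ for a.e.\ $\beta$, and $s\uparrow\hdim A$ finishes. If $\hdim A>1$, Frostman supplies, for some $s\in(1,\hdim A)$, a measure $\mu$ on $A$ with $\mu(B(x,r))\le Cr^s$, so $\hdim\mu>1$; by (ii) $P_\beta\mu$ is absolutely continuous and nonzero for a.e.\ $\beta$, and an absolutely continuous measure cannot be carried by the set $P_\beta A$ unless the latter has positive Lebesgue measure. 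I expect the main obstacle to be bookkeeping: arranging the reduction from a hypothesis on $\hdim\mu$ (a property of the lower local dimension) to genuine finite-energy pieces, while respecting that restriction can only raise the lower Hausdorff dimension. The one genuinely rigid point is that both critical computations — the radial integrability of $|\xi|^{-1}$ and the angular integrability of $|\cos\phi|^{-s}$ — turn exactly on the threshold exponent $1$, which is precisely why $\hdim=1$ is the dividing line in the statement.
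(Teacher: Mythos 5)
Your proof is correct. There is, however, no internal argument in the paper to compare it against: Proposition \ref{prop:marstrand} is stated as background, with part (i) attributed to Marstrand's classical theorem and part (ii) to Hu and Taylor \cite{HuTaylor94}. What you have written is the standard potential-theoretic proof (Kaufman's Fourier-analytic argument, as in \cite{Mattila95}), which is essentially what underlies those references, so in substance you are reproving the cited result rather than diverging from the paper. Two points of bookkeeping deserve an explicit sentence in a polished write-up. First, the reduction from the hypothesis on $\hdim\mu$ to genuine Frostman pieces uses the identity $\hdim\mu=\operatorname{essinf}_x\liminf_{r\to 0}\log\mu(B(x,r))/\log r$, and the bound $\nu(B(x,r))\le r^{s'}$ for $\nu=\mu|_{A_N}$ holds a priori only at points $x\in A_N$; for the energy estimate one should note that if $B(x,r)$ meets $A_N$ then $B(x,r)\subset B(y,2r)$ for some $y\in A_N$, which restores the bound (up to a constant) at every $x$, and that the far-field part of $I_t(\nu)$ is trivially finite since $|x-y|^{-t}\le 1$ there. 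Second, the deduction of (i) in the case $\hdim A>1$ requires $A$ to be Borel (as assumed) so that Frostman's lemma applies. With those caveats the argument is complete, and the two threshold computations you isolate --- local integrability of $|\xi|^{-1}$ in the plane and integrability of $|\cos\phi|^{-s}$ for $s<1$ --- are indeed exactly where the dividing exponent $1$ enters.
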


The first part is Marstrand's classical theorem on projections, and the second part is a standard variant; see Hu and Taylor
\cite[Theorem 6.1]{HuTaylor94} for the proof. In general, the exceptional set can be large from the point of view of dimension (full Hausdorff dimension) and topology (dense $G_\delta$). Nevertheless, recently there has been much interest in improving general projection results for specific class of sets and measures; in particular, for \emph{self-similar} sets and measures: see e.g. \cite{PeresShmerkin09, NPS12, HochmanShmerkin12, Hochman13, FalconerJin14, Farkas14}.  However, all of these papers deal with the \emph{dimension} part, and the techniques fall short of directly giving any results on absolute continuity (or positive Lebesgue measure in the case of sets). We have the following result for projections of homogeneous self-similar measures:

\begin{mainthm} \label{mainthm:projection-measures}
Fix $\lam=r\exp(2\pi i\alpha)\in\cS_2, a\in\R^{2m}$ such that the planar IFS $(\lambda x+a_i)_{i=1}^m$ satisfies the strong separation condition. There exists a set $E\subset [0,\pi)$ of zero Hausdorff dimension such that the following holds.
\begin{enumerate}[(i)]
\item  The measure $P_\beta\mu(\lam,a,p)$ is absolutely continuous for any $p\in\PP_m$ such that $r h(p)>1$ and any $\beta\in [0,\pi)\setminus E$. Moreover, the density is in some $L^q$ space with $q=q(\beta,p)>1$.
\item If the rotational part of $\lam$ generates a dense subgroup of rotations (i.e. $\alpha/\pi\notin\mathbb{Q}$) and
\begin{equation} \label{eq:Lq-larger-than-1}
r^{q-1}>\sum_{i=1}^m p_i^q,
\end{equation}
for some $q\in (1,2]$, then for all $\beta\in [0,\pi)\setminus E$ the measure $P_\beta\mu(\lam,a,p)$  has a density in $L^q$.
\end{enumerate}
\end{mainthm}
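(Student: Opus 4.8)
The plan is to realize $\{P_\beta\mu\}_\beta$, with $\mu=\mu(\lam,a,p)$, as a parametrized family of self-similar measures on the line when $\lam$ is real, and as a ``rotating'' relative of one otherwise, and in each case to run the $L^q$-smoothing machinery behind Theorem~\ref{mainthm:parametrized}. Identify $\R^2$ with $\C$, write $\lam=r\exp(2\pi i\alpha)$, and take $P_\beta$ to be the scalar map $P_\beta z=\mathrm{Re}(e^{-i\beta}z)$ and $c_i(\beta)=\mathrm{Re}(e^{-i\beta}a_i)$. The structural backbone is the identity
\[
P_\beta(\lam z+a_i)=r\,P_{\beta-2\pi\alpha}(z)+c_i(\beta).
\]
When $\alpha\in\{0,\tfrac12\}$ it says that $P_\beta\mu$ is \emph{exactly} the homogeneous self-similar measure on $\R$ with contraction $r$, translations $c_i(\beta)$ and weights $p$. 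Since $\beta\mapsto(r,(c_i(\beta))_i)$ is real-analytic, I would apply Theorem~\ref{mainthm:parametrized} with $\ell=1$. Its non-degeneracy hypothesis is supplied by strong separation: the line IFS has coding map $\Pi_\beta=P_\beta\circ\Pi$, distinct $\iii,\jjj\in[m]^\N$ give distinct points $\Pi(\iii)\neq\Pi(\jjj)$ of the planar attractor, and $P_\beta\Pi(\iii)=P_\beta\Pi(\jjj)$ only for the single $\beta$ orthogonal to $\Pi(\iii)-\Pi(\jjj)$. This yields part~(i) off a set of dimension $\le\ell-1=0$ in the non-rotational case; note that $rh(p)>1$ forces $\hdim\mu>1$, placing us in the super-critical regime.

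When $\alpha\notin\{0,\tfrac12\}$ the projection is no longer self-similar, and here I would re-run the \emph{proof} of Theorem~\ref{mainthm:parametrized} for the rotating system rather than invoke it. The same identity gives the graph-directed relation $P_\beta\mu=\sum_i p_i\,S_{i,\beta}\big(P_{\beta-2\pi\alpha}\mu\big)$ with $S_{i,\beta}(x)=rx+c_i(\beta)$, over the circle rotation $\beta\mapsto\beta-2\pi\alpha$; equivalently $P_\beta\mu$ is the law of $\sum_{n\ge0}r^n\mathrm{Re}(e^{i(2\pi n\alpha-\beta)}a_{i_n})$ with $(i_n)$ i.i.d.\ $\sim p$, whose coefficients now depend on $n$. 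Two ingredients must be transported. First, Hochman's theorem \cite{Hochman13} and the projection results of \cite{HochmanShmerkin12} force $\hdim P_\beta\mu=\min(\hdim\mu,1)=1$ for all $\beta$ outside a zero-dimensional set. Second, the $L^q$-smoothing argument of \cite{Shmerkin13} upgrades full \emph{dimension} to \emph{absolute continuity}: writing $P_\beta\mu$ as the convolution of the law of a long initial block $\sum_{n<N}$ with that of the tail, an inverse-theorem/$L^q$ inequality keeps the $L^q$ norms of $P_\beta\mu*\phi_\delta$ bounded. Equidistribution of $\{n\alpha\}$ is what makes these block estimates uniform in the rotating angles, giving part~(i) in general with a single zero-dimensional $E$.

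For part~(ii) the dense-rotation hypothesis is in force and the target is the prescribed exponent $q$. Condition~\eqref{eq:Lq-larger-than-1} is precisely the statement that the no-overlap $L^q$-dimension $\frac{\log\sum_i p_i^q}{(q-1)\log r}$ exceeds $1$, i.e.\ that $P_\beta\mu$ is $L^q$-super-critical. I would feed this into the partial continuity result for $L^q$ dimensions advertised in the introduction: combined with the full-dimension fact above and Hochman's separation, it shows that the $L^q$-dimension of $P_\beta\mu$ attains the maximal value $1$ for every $\beta\notin E$, equivalently that $P_\beta\mu$ has a density in $L^q$. Because the controlling estimates are averages over the rotation orbit, the same zero-dimensional $E$ serves here.

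The main obstacle is the rotational case: with no ready-made self-similar family to cite, one must show that the entropy and $L^q$ machinery survive for the non-autonomous system driven by the irrational rotation. The two concrete difficulties are (1) uniform-in-$\beta$ control of the $L^q$ norms of the finite-block approximants even though the projection angle rotates at every step, which forces one to exploit equidistribution together with strong separation to keep cylinder images well separated at the relevant scales; and (2) arranging that a \emph{single} zero-dimensional $E$ works simultaneously for all super-critical $p$ in part~(i) and for the fixed $q$ in part~(ii), rather than a $p$- or $q$-dependent set. Establishing the $L^q$-dimension continuity statement uniformly along the rotation orbit is where the bulk of the technical work lies.
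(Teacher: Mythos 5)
Your treatment of the no-rotation case is essentially the paper's: there $P_\beta\mu=\mu(r,P_\beta a,p)$ is a genuine one-parameter analytic family of self-similar measures on the line, the non-degeneracy hypothesis of Theorem~\ref{mainthm:parametrized} follows from injectivity of the planar coding map under strong separation exactly as you say, and part~(i) follows with an exceptional set of dimension $\ell-1=0$. Two remarks: the correct dichotomy is $\alpha\in\mathbb{Q}$ versus $\alpha\notin\mathbb{Q}$, not $\alpha\in\{0,\tfrac12\}$ versus not --- for any rational $\alpha$ one first iterates the IFS to kill the rotation and then runs the same argument; your ``rotating system'' branch, which leans on equidistribution of $\{n\alpha\}$, cannot absorb the remaining rational angles.

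The irrational case is where the proposal has a genuine gap. Your plan is: establish $\hdim P_\beta\mu=1$ (or $D_q(P_\beta\mu)=1$) and then assert that an ``inverse-theorem/$L^q$ inequality'' or the $L^q$-continuity result upgrades this to an $L^q$ density. No such upgrade exists: full Hausdorff or $L^q$ dimension is necessary but far from sufficient for absolute continuity (dimension~$1$ singular measures abound), and the sentence ``the $L^q$-dimension of $P_\beta\mu$ attains the maximal value $1$ \ldots, equivalently \ldots has a density in $L^q$'' asserts a false equivalence. The missing ingredient is the entire Fourier-decay half of the argument. The paper splits $\mu_p=\nu_{p,k}*\eta_{p,k}$ by skipping/keeping every $k$-th digit (both infinite convolutions, not an initial block and a tail), shows via the subadditive-cocycle/Furman argument over the irrational rotation (Theorem~\ref{thm:projection-dim-irrational}, the $L^q$ analogue of \cite{HochmanShmerkin12}) that $D_q(P_\beta\nu_{p,k})=1$ for \emph{all} $\beta$, and --- crucially --- proves a new Erd\H{o}s--Kahane variant (Proposition~\ref{prop:EK-projections}, controlling $\|t\theta^n\cos(\beta+n\alpha)\|$) showing $P_\beta\eta_{p,k}$ has power Fourier decay off a zero-dimensional set $E_k$. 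Only then does the convolution criterion (Theorem~\ref{thm:abs-cont-convolution}: $D_q$ close to $1$ plus positive Fourier dimension of the other factor gives an $L^p$ density) yield part~(ii), with part~(i) in the irrational case obtained by letting $q\searrow 1$. Without the Erd\H{o}s--Kahane decay estimate and the convolution criterion, your outline has no mechanism for producing absolute continuity, which is the whole point of the theorem.
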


\begin{rem}
The assumption \eqref{eq:Lq-larger-than-1} is sharp up to the endpoint. Indeed, it is well known that (under the open set condition) the $L^q$ dimension of $\mu(\lam,a,p)$ is given
\begin{equation} \label{eq:Lq-dim-ssm}
D_q(\mu_{\lam,a,p}) = \frac{\log \sum_{i=1}^m p_i^q}{(q-1)\log r}.
\end{equation}
(See Section \ref{sec:abs-cont-of-convolution} for the definition of the $L^q$ dimension. See e.g.\ \cite[Theorem 16]{Riedi93} for the proof of (\ref{eq:Lq-dim-ssm}). We will actually only use this
formula under the strong separation condition.) Since $L^q$ dimension does not increase under projections, and a measure on the line of $L^q$ dimension $<1$ cannot have an $L^q$ density, it follows that a necessary condition for $P_\beta\mu(\lam,a,p)$ to have an $L^q$ density is that $r^{q-1}\ge\sum_{i=1}^m p_i^q$.
\end{rem}

As a consequence of Theorem \ref{mainthm:projection-measures}, we obtain an analogous result valid for \emph{all} self-similar sets on $\R^2$.
\begin{mainthm} \label{mainthm:projection-sets}
Let $A\subset\R^2$ be any self-similar set. Then
\begin{align*}
\hdim( \beta\in [0,\pi): \hdim P_\beta A< \hdim A)) = 0 \quad\text{if } \hdim A\le 1,\\
\hdim( \beta\in [0,\pi): \mathcal{L}_1(P_\beta A)=0 ) = 0 \quad\text{if } \hdim A > 1.
\end{align*}
\end{mainthm}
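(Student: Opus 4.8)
The plan is to deduce Theorem \ref{mainthm:projection-sets} from Theorem \ref{mainthm:projection-measures}, treating the two regimes separately since they are of different nature. The case $\hdim A\le 1$ is a pure dimension statement about projections of self-similar sets, and for it I would simply invoke the existing dimension-preservation results: by \cite{HochmanShmerkin12, Farkas14} (the latter requiring no separation assumption on $A$), one has $\hdim P_\beta A=\hdim A$ for every $\beta$ outside a set of Hausdorff dimension zero. So the real work is the supercritical case $\hdim A>1$, where I must exhibit an exceptional set of directions of dimension zero off which $\mathcal{L}_1(P_\beta A)>0$.

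The strategy for $\hdim A>1$ is to extract from $A$ a subset $A'$ to which Theorem \ref{mainthm:projection-measures} applies directly, namely a self-similar set generated by a \emph{homogeneous} IFS satisfying the \emph{strong separation condition}, while keeping $\hdim A'>1$. Granting such an $A'$, the conclusion is immediate: equip the homogeneous IFS for $A'$, say with $K$ maps and common contraction ratio $r'$, with the uniform weights $p=(1/K,\ldots,1/K)$, and let $\mu'$ be the corresponding self-similar measure. Under the strong separation condition its similarity dimension is $\log K/\log(1/r')=\hdim A'>1$, so the supercritical hypothesis of Theorem \ref{mainthm:projection-measures}(i) holds. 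That theorem furnishes a single set $E\subset[0,\pi)$ with $\hdim E=0$ such that $P_\beta\mu'$ is absolutely continuous for all $\beta\notin E$. An absolutely continuous probability measure has support of positive Lebesgue measure, and $\supp(P_\beta\mu')=P_\beta(\supp\mu')=P_\beta A'\subseteq P_\beta A$; hence $\mathcal{L}_1(P_\beta A)\ge \mathcal{L}_1(P_\beta A')>0$ for every $\beta\notin E$, which is exactly the assertion.

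Two reductions are needed to produce $A'$. The \emph{homogenization} is the clean one: the linear part of an orientation-preserving planar similarity is multiplication by a complex number, and these commute, so the linear part of a composition $f_{i_1}\circ\cdots\circ f_{i_n}$ depends only on how many times each index occurs. Thus, fixing a count vector $(k_1,\ldots,k_M)$ with $\sum_j k_j=N$ and collecting all words that are rearrangements of it yields a homogeneous sub-IFS, every map of which has linear part $\prod_j\lambda_j^{k_j}$; if a few generators reverse orientation one first restricts to compositions containing an even number of them, which still commute and retain dimension. Strong separation is inherited by every iterate, so this sub-IFS is strongly separated as soon as the system we start from is. A short entropy computation (Stirling applied to the multinomial coefficient $\binom{N}{k_1,\ldots,k_M}$) shows that, choosing $k_j\approx N\,\rho_j^{s}$ — where $\rho_j=|\lambda_j|$ is the contraction ratio of the $j$-th map and $s$ is the dimension of the strongly separated system being homogenized — and taking $N$ large, the resulting homogeneous attractor has dimension converging to $s$, hence $>1$ once $s>1$.

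The main obstacle is the other reduction: passing from an \emph{arbitrary} self-similar set (no separation, possibly with overlaps) to a strongly separated self-similar subset of dimension still exceeding $1$. Without overlaps this is routine — a greedy packing of the level-$n$ cylinders against a Frostman measure of exponent close to $\hdim A$ produces exponentially many pairwise disjoint pieces, hence a strongly separated subsystem of almost full dimension. The difficulty is that severe (in the limit, exact) overlaps could make many cylinders pile up, so that the number of \emph{disjoint} pieces one can select is far smaller than the number needed to retain dimension. This is precisely the regime controlled by Hochman's machinery \cite{Hochman13}, which relates super-exponential concentration of cylinders to exact overlaps; I expect to route the argument through this fact, together with the no-separation projection results of \cite{Farkas14}, to guarantee a strongly separated subsystem of dimension $>1$. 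Establishing this "strongly separated subset of almost full dimension" statement in the presence of overlaps is the crux on which the whole deduction rests.
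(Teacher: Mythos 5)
Your overall architecture is exactly the paper's: extract from $A$ a homogeneous, strongly separated self-similar subset $A'$ with $\hdim A'>1$, apply Theorem \ref{mainthm:projection-measures}(i) to the uniform self-similar measure on $A'$, and note that an absolutely continuous measure cannot be supported on a Lebesgue-null set; the subcritical case is dispatched by citing known dimension results. The one place where you stop short is precisely the step you flag as the crux, and there you propose the wrong tool. The statement that any self-similar set contains a strongly separated self-similar subset of dimension arbitrarily close to $\hdim A$ is folklore (the paper cites \cite[Lemma 3.4]{Orponen12}), and its proof is elementary and does \emph{not} require Hochman's inverse theorem or \cite{Farkas14}. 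Your worry that overlaps could make the disjoint cylinder count collapse is unfounded: since $\mathcal{H}^t(A)>0$ for every $t<\hdim A$, Frostman's lemma gives a measure $\mu$ on $A$ with $\mu(B(x,\delta))\le C\delta^t$; taking the stopping-time family of cylinders $f_w(A)$ of diameter comparable to $\delta$ (which covers $A$) and running the $5r$-covering lemma against $\mu$ selects $\gtrsim\delta^{-t}$ \emph{pairwise disjoint} cylinders, and these words generate a strongly separated sub-IFS of similarity (hence Hausdorff) dimension $\ge t-o_\delta(1)$. The overlaps are already priced into $\hdim A$ via the Frostman exponent, so no entropy-inverse-theorem input is needed. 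Hochman's machinery addresses when dimension \emph{drops below} the similarity dimension; it is not a device for extracting separated subsystems.

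Two smaller points. First, your homogenization sketch is correct only for orientation-preserving generators: if some $f_i(z)=\lambda_i\bar z$, the linear part of a composition is not determined by the count vector (two orientation-reversing similarities need not commute), so one should instead quote \cite[Proposition 6]{PeresShmerkin09}, which the paper does, or first pass to the index-two orientation-preserving subsystem and rerun the rearrangement argument there. Second, for the case $\hdim A\le 1$ the paper's route is to use \cite[Theorem 5]{PeresShmerkin09} when an irrational rotation is present (no exceptional directions at all) and otherwise to reduce, by the same inner approximation, to a homogeneous strongly separated system and apply \cite[Theorem 1.8]{Hochman13}; your citation of a blanket ``no separation needed'' dimension result should be checked to actually deliver a zero-dimensional (not merely null) exceptional set for arbitrary $A$, which is why the paper routes through the approximation argument rather than quoting such a result directly.
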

This should be compared with Marstrand's Theorem (Proposition \ref{prop:marstrand}): in the special case of self-similar sets, the exceptional set is not only of Lebesgue measure zero, but in fact of zero Hausdorff dimension. The part of the statement concerning the case $\hdim A\le 1$ was essentially already known, our contribution is the positive measure part.

\subsection{Convolutions of self-similar measures}

Recall that the convolution of the measures $\mu,\nu\in\cP_d$ is the push down of $\mu\times \nu$ under the map $S(x,y)=x+y$. The dimensions of convolutions of self-similar measures were investigated in \cite{NPS12, HochmanShmerkin12} and, indirectly as a consequence of more general results, in \cite{Hochman13}. Since these results serve both as motivation and as key tools in our investigations, we recall some of them. Let $T_u(x)= ux$ be the map that scales a number by $u$.

\begin{thm}[{\cite{HochmanShmerkin12, Hochman13}}]  \label{thm:dimension-convolution}
For $i=1,2$, let $\lam_i\in  (0,1)$, $a_i\in\R^{m_i}, p_i\in\PP_{m_i}, m_i\ge 2$, and suppose the strong separation condition is satisfied for $(\lambda x+a_{ij})_{j=1}^{m_i}$. Write $\mu_i=\mu(\lam_i,a_i,p_i)$ for simplicity.
\begin{enumerate}[(i)]
\item If $\log \lam_2/\log\lam_1\notin\mathbb{Q}$, then
\begin{equation} \label{eq:convolution-dim}
\hdim(\mu_1*T_u \mu_2)=\min(\hdim\mu_1+\hdim\mu_2,1),
\end{equation}
for all $u\in\R\setminus\{0\}$.
\item In general, without any algebraic assumptions, \eqref{eq:convolution-dim} holds for all $u$ outside of a set of dimension zero which is independent of $p_1,p_2$.
\end{enumerate}
\end{thm}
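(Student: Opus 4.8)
The upper bound is immediate: since $\mu_1 * T_u\mu_2$ is the push-forward of the product measure $\mu_1\times T_u\mu_2$ under the Lipschitz map $S(x,y)=x+y$, and push-forward under Lipschitz maps cannot increase Hausdorff dimension, we get $\hdim(\mu_1*T_u\mu_2)\le \hdim(\mu_1\times T_u\mu_2)=\hdim\mu_1+\hdim\mu_2$; the bound by $1$ is trivial as the convolution lives on $\R$. So the whole content is the matching lower bound, and I would attack it through multiscale entropy. Under strong separation the factors are exact-dimensional (Feng--Hu), with $\hdim\mu_i=h(p_i)/\log(1/\lam_i)$ by Proposition \ref{prop:sim-dim}. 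Writing $H_n(\rho)=\tfrac{1}{n\log 2}H(\rho,\mathcal{D}_n)$ for the normalized dyadic entropy at scale $2^{-n}$, exact-dimensionality gives $H_n(\mu_i)\to\hdim\mu_i$, and the target is to force $H_n(\mu_1*T_u\mu_2)$ up to its maximal possible value $\min(H_n(\mu_1)+H_n(\mu_2),1)$. A warning is that the lower entropy dimension is in general only an \emph{upper} bound for $\hdim$, so an entropy lower bound does not by itself control the Hausdorff dimension of the convolution. I would resolve this by first establishing that $\mu_1,\mu_2$ are uniformly scaling measures, hence generate ergodic fractal distributions under the magnification dynamics; within that framework the convolution is itself exact-dimensional, and its dimension is computed by a local-entropy-average (projection-type) formula, so that the entropy lower bound genuinely delivers the dimension conclusion.

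The engine of the lower bound is Hochman's inverse theorem \cite{Hochman13} for the entropy of convolutions on $\R$: for every $\e>0$ there is $\delta>0$ so that if $H_n(\mu*\nu)\le H_n(\mu)+\delta$ at a large scale $n$, then there are sets of scales $I,J\subseteq\{1,\dots,n\}$ with $|I|+|J|\ge(1-\e)n$ such that at each $k\in I$ the scale-$k$ components of $\mu$ are within $\e$ of uniform (refinement entropy $\ge 1-\e$) while at each $k\in J$ the scale-$k$ components of $\nu$ are $\e$-atomic (refinement entropy $\le\e$). Intuitively, convolving with $\nu$ must add entropy at every scale where $\mu$ is not already saturated and $\nu$ is not degenerate, which is exactly what drives the dimension up to $\min(\text{sum},1)$. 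Applying the theorem with $\mu=\mu_1$, $\nu=T_u\mu_2$, any failure of the desired lower bound forces, at arbitrarily large $n$, precisely this rigid dichotomy of scales; the plan is then to show the self-similar structure of the factors is incompatible with it whenever $\log\lam_2/\log\lam_1\notin\mathbb{Q}$.

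Here the self-similarity and the irrationality enter together. Because the IFSs are homogeneous and strongly separated, the scale-$k$ component of $\mu_i$ at a typical point is, up to a controlled error, an affine copy of $\mu_i$ seen at a sub-resolution determined only by the phase $\{k\,\theta_i\}$, where $\theta_i=\log 2/\log(1/\lam_i)$; hence whether a component is near-uniform or near-atomic is asymptotically a function of the phase alone. Since $0<\hdim\mu_1<1$, the set $B_1\subseteq\mathbb{T}$ of phases at which $\mu_1$ is near-uniform has Lebesgue measure strictly below $1$ (else the average refinement increment, hence $\hdim\mu_1$, would be $1$); symmetrically, since $\hdim\mu_2>0$, the set $B_2\subseteq\mathbb{T}$ of phases at which $\mu_2$ is near-atomic has measure below $1$. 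The inverse theorem would demand $I\subseteq\{k:\{k\theta_1\}\in B_1\}$ and $J\subseteq\{k:\{k\theta_2\}\in B_2\}$ up to density $\e$. When $\theta_1/\theta_2=\log\lam_2/\log\lam_1$ is irrational, Weyl's theorem makes $k\mapsto(\{k\theta_1\},\{k\theta_2\})$ equidistribute on the full torus $\mathbb{T}^2$, so the density of scales lying in neither admissible set is $|B_1^c|\cdot|B_2^c|>0$ -- contradicting $|I|+|J|\ge(1-\e)n$ for small $\e$. This yields part (i) for every $u\ne 0$. In the case $\theta_1/\theta_2\in\mathbb{Q}$ the phase pair equidistributes only on a proper closed subgroup of $\mathbb{T}^2$, on which the product bound can collapse: for arithmetically resonant scalings $u$ the non-uniform phases of $\mu_1$ may align with the atomic phases of $T_u\mu_2$, permitting a genuine entropy deficit. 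I would identify these resonant $u$ with the parameters creating exact-overlap-type coincidences between the two scaled systems; they form a set of zero Hausdorff dimension governed by $(\lam_1,\lam_2,a_1,a_2)$ and, since the degeneracy is driven by the scaling phases rather than the masses, independent of $p_1,p_2$, giving part (ii).

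The main obstacle is twofold. First and deepest is the inverse theorem itself: proving that a deficit in convolution entropy forces the rigid uniform/atomic dichotomy of components is the hard combinatorial core, an additive-combinatorics statement in the spirit of Freiman's theorem transported to the multiscale setting. Second, and more technical, is making rigorous the reduction ``component of $\mu_i$ at scale $k$ $\approx$ copy depending only on the phase $\{k\theta_i\}$'' with errors uniform enough to survive the positive-density counting, together with establishing exact-dimensionality of the convolution (via the fractal-distribution dynamics) so that the entropy bound yields the Hausdorff-dimension statement and recovers the full sum rather than merely a lack of deficiency. The equidistribution step, by contrast, is soft once the phase description is in place.
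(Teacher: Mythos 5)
First, a point of order: the paper does not prove this statement. It is quoted as background directly from \cite{HochmanShmerkin12, Hochman13} (the citation is in the theorem header): part (i) is the local-entropy-averages / CP-distribution theorem of Hochman--Shmerkin, and part (ii) follows from Hochman's theorem on parametrized families applied to the analytic family of projections $u\mapsto P_u(\mu_1\times\mu_2)$ of the planar product measure. So there is no internal proof to compare against, and I can only measure your sketch against the arguments in those references.

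Your sketch has a genuine gap at its central step. You invoke Hochman's inverse theorem under the hypothesis $H_n(\mu*\nu)\le H_n(\mu)+\delta$ and assert that ``any failure of the desired lower bound forces, at arbitrarily large $n$, precisely this rigid dichotomy of scales.'' It does not. Failure of \eqref{eq:convolution-dim} means $\hdim(\mu_1*T_u\mu_2)<\min(\hdim\mu_1+\hdim\mu_2,1)$, which (even granting exact dimensionality of the convolution) only yields $H_n(\mu_1*T_u\mu_2)\le H_n(\mu_1)+H_n(\mu_2)-cn$ along a subsequence; the convolution may still gain a positive but deficient amount of entropy over $\mu_1$, and in that regime the inverse theorem is silent. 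Converting a \emph{partial} entropy deficit into the no-growth hypothesis of the inverse theorem is the hard part of Hochman's argument (it exploits the self-similar relation $\mu=\mu^{(n)}*S\mu$ for a rescaling $S$ and an amplification over cylinder generations), and it is exactly what your sketch omits. A second symptom that the rationality hypothesis is misplaced in your outline: for a self-similar measure with $0<\hdim\mu_i<1$, at large granularity the components at almost every scale are neither $\e$-uniform nor $\e$-atomic (they are approximate copies of $\mu_i$ with intermediate normalized entropy), so your sets $B_1,B_2$ are essentially empty, and your argument --- were the inverse-theorem application legitimate --- would prove the conclusion for \emph{all} $u\neq 0$ with no arithmetic assumption, which is false (rational resonance producing exact overlaps genuinely lowers the dimension). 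In the actual proofs the arithmetic condition enters elsewhere: in \cite{HochmanShmerkin12} through unique ergodicity of the irrational rotation acting on the scenery generated by $\mu_1\times\mu_2$, which upgrades the Marstrand almost-every statement to all directions; and in part (ii) through the parametrized theorem, whose exceptional set depends only on the IFS data and hence not on $p_1,p_2$. Your upper bound, and your warning that entropy dimension alone does not control Hausdorff dimension, are both correct.
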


In fact, the first part holds for more general, not necessarily homogeneous self-similar measures. In the homogeneous case, the analogous result for \emph{correlation} rather than Hausdorff dimension was established in \cite{NPS12} (under the irrationality assumption in the first part); this will be discussed in more detail in Section \ref{subsec:proof-convolutions}. These results again suggest the question of whether, in the super-critical regime (sum of the dimensions $>1$), one can go beyond full dimension and establish absolute continuity of the convolution. One cannot hope to get no exceptions when $\log\lam_2/\log\lam_1$ is irrational, since in \cite[Theorem 4.1]{NPS12} it was shown that
\[
\mu(1/3,(0,1),(\tfrac12,\tfrac12)) * T_u \mu(1/4,(0,1),(\tfrac12,\tfrac12))
\]
is singular for a dense $G_\delta$ set of parameters $u$. On the other hand, in \cite[Theorem 1.3]{Shmerkin13}, it was proved that there exists a zero-dimensional set $E\subset (0,1/2)$ such that for $\lam\in (0,1/2)\setminus E$ and all $\lam'$ such that $\lam'/\lam\notin\mathbb{Q}$, the measure
\[
\mu(\lam,(0,1),(\tfrac12,\tfrac12)) * T_u \mu(\lam',(0,1),(\tfrac12,\tfrac12))
\]
is absolutely continuous (even with an $L^2$ density) for \emph{all} $u\in\R\setminus\{0\}$. The following result in some sense interpolates between these two.

\begin{mainthm} \label{mainthm:convolutions}
For $i=1,2$, let $\lam_i\in (-1,0)\cup (0,1)$, $a_i\in\R^{m_i}, m_i\ge 2$, and assume that
\begin{equation} \label{eq:sum-of-dims-larger-1}
\frac{\log m_1}{|\log\lam_1|}+\frac{\log m_2}{|\log\lam_2|}>1.
\end{equation}
Suppose also that the $(\lam_i x+a_{ij})_{j=1}^{m_i}$, $i=1,2$, satisfy the strong separation condition.
\begin{enumerate}[(i)]
\item There exists a set $E\subset \R$ of Hausdorff dimension zero such that
\[
\mu(\lam_1,a_1,p_1)*T_u\mu(\lam_2,a_2,p_2) \ll \mathcal{L}_1,
\]
and has a density in $L^q$ for some $q=q(\lam_i,a_i,p_i)>1$, for all $u\in\R\setminus E$ and any $p_i\in \PP_{m_i}$ such that
\begin{equation} \label{eq:sum-of-hausd-dims-g1}
\hdim\mu(\lam_1,a_1,p_1)+\hdim\mu(\lam_2,a_2,p_2) > 1.
\end{equation}
\item If, additionally, $\log |\lam_2|/\log|\lam_1|$ is irrational, then
\begin{equation*}
\mu(\lam_1,a_1,p_1)*T_u\mu(\lam_2,a_2,p_2) \in L^q(\R)
\end{equation*}
for all $u\in\R\setminus E$ whenever $q\in (1,2]$, and
\begin{equation} \label{eq:sum-of-corr-dims-g1}
\frac{\log\sum_{j=1}^{m_1}p_{1j}^q}{(q-1)|\log\lam_1|} + \frac{\log\sum_{j=1}^{m_2}p_{2j}^q}{(q-1)|\log\lam_2|} > 1.
\end{equation}
\end{enumerate}
\end{mainthm}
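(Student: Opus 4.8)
The plan is to reduce Theorem \ref{mainthm:convolutions} to the parametrized family result, Theorem \ref{mainthm:parametrized}, by recognizing that the convolution $\mu_1 * T_u\mu_2$ is itself a self-similar measure associated to a product-type IFS whose translations depend on $u$. Concretely, I would consider the homogeneous IFS on $\R$ generated by composing the two systems. Since both $\mu_1$ and $T_u\mu_2$ are invariant under homogeneous IFSs with respective contraction ratios $\lam_1$ and $\lam_2$, one natural approach is to pass to a common refinement. For the irrational case (part (ii)), where $\log|\lam_2|/\log|\lam_1|$ is irrational, this is delicate because there is no common power of $\lam_1$ and $\lam_2$; but one can exploit that $\mu_1 * T_u\mu_2 = \Pi_u(\ov{\mu})$ where $\ov\mu = \mu_1\times\mu_2$ lives on the symbolic product space and $\Pi_u$ is a coding/projection map that depends analytically (indeed linearly) on the scaling parameter $u$.

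First I would set up the symbolic coding: let $\Sigma = [m_1]^\N \times [m_2]^\N$ and define the projection $\Pi_u(\iii,\jjj) = \sum_{n} \lam_1^{n} a_{1,i_n} + u\sum_n \lam_2^n a_{2,j_n}$, so that $\Pi_u(\ov\mu) = \mu_1 * T_u\mu_2$. The key point is that this is a self-similar measure for the combined system, and that the family $\{\Pi_u\}_{u\in\R\setminus\{0\}}$ (or a suitable reparametrization making it fit the real-analytic framework on a one-dimensional parameter interval) satisfies the non-degeneracy hypothesis of Theorem \ref{mainthm:parametrized}: distinct symbolic sequences $(\iii,\jjj)\neq(\iii',\jjj')$ must be separated by some value of $u$. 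This should follow from the strong separation condition on each factor together with the structure of the projection, since if the projections agree for all $u$, comparing the $u^0$ and $u^1$ parts forces both symbolic sequences to coincide. The super-criticality condition \eqref{eq:sum-of-dims-larger-1} translates exactly into the similarity-dimension-exceeds-one hypothesis of Theorem \ref{mainthm:parametrized}, and \eqref{eq:sum-of-hausd-dims-g1} corresponds to the weight condition $s(\cdot,\cdot,p)>1$ via Proposition \ref{prop:sim-dim}(ii) under strong separation. Thus part (i) should follow fairly directly, yielding the zero-dimensional exceptional set $E$ and an $L^q$ density for some $q>1$.

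The main obstacle is part (ii): obtaining the \emph{sharp} range of $q$ under the irrationality hypothesis, with the explicit $L^q$-dimension condition \eqref{eq:sum-of-corr-dims-g1} rather than merely some unspecified $q>1$. Here the parametrized-family machinery alone gives only a non-explicit exponent, so I expect to need the separate analysis of $L^q$ dimensions of convolutions. The idea is that under irrationality of $\log|\lam_2|/\log|\lam_1|$, Theorem \ref{thm:dimension-convolution}(i) and its $L^q$-dimension analogue (the correlation-dimension results from \cite{NPS12}, and the more refined $L^q$ statements) guarantee that the $L^q$ dimension of $\mu_1 * T_u\mu_2$ equals $\min$ of $1$ and the sum of the individual $L^q$ dimensions, for \emph{all} $u\neq 0$ with no exceptions. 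I would combine this dimensional input with the absolute-continuity conclusion from part (i): once we know (for $u\notin E$) that the measure is absolutely continuous with \emph{some} $L^q$ density, a bootstrapping argument using interpolation of $L^q$ dimensions and the explicit formula \eqref{eq:Lq-dim-ssm} should upgrade the integrability to every $q\in(1,2]$ for which \eqref{eq:sum-of-corr-dims-g1} holds.

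The hard part will be this last bootstrapping step, namely transferring knowledge of the $L^q$ \emph{dimension} being maximal into genuine membership of the density in $L^q$. In general a measure can have full $L^q$ dimension without an $L^q$ density; the gain comes from the fact that we already have absolute continuity plus \emph{some} integrability from part (i), which lets us run a self-improvement. I anticipate invoking the convolution structure together with a result (to be established earlier in the paper, in Section \ref{sec:abs-cont-of-convolution}) relating $L^q$ norms of convolutions to the $L^q$ dimensions of the factors, so that the irrationality-driven independence of $\lam_1$ and $\lam_2$ forces no dimension loss and the formula \eqref{eq:Lq-dim-ssm} for each factor plugs directly into \eqref{eq:sum-of-corr-dims-g1}. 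The uniformity of the exceptional set $E$ over all weights $p_i$ (as in Theorem \ref{thm:dimension-convolution}(ii)) is what allows a single $E$ to work simultaneously for parts (i) and (ii) and for all admissible $p_i$.
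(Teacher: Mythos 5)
There is a genuine gap, and it occurs precisely where your proposal is vaguest. Your reduction to Theorem \ref{mainthm:parametrized} only works when $\log|\lam_2|/\log|\lam_1|\in\mathbb{Q}$: in that case one can iterate both systems until $\lam_1=\lam_2$, the product $\mu_1\times\mu_2$ becomes a homogeneous planar self-similar measure, and the convolutions $\mu_1*T_u\mu_2$ are (after reparametrization) its orthogonal projections --- this is exactly what the paper does for the rational case. But in the irrational case there is no common power of $\lam_1$ and $\lam_2$, so $\mu_1*T_u\mu_2$ is \emph{not} a homogeneous self-similar measure and the coding map $\Pi_u$ on $[m_1]^\N\times[m_2]^\N$ is not the projection map of any IFS of the form $(\lam_u x+a_{u,i})_i$; Theorem \ref{mainthm:parametrized} simply does not apply, and no reparametrization fixes this. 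Consequently part (i) does not ``follow fairly directly'' in the irrational case.

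The second gap is the ``bootstrapping'' for part (ii). As you yourself observe, full $L^q$ dimension does not imply an $L^q$ density, and knowing absolute continuity with an $L^{q_0}$ density for some unrelated $q_0>1$ provides no mechanism to upgrade to the sharp range of $q$; there is no interpolation argument of the kind you gesture at. What actually makes the proof work is a convolution decomposition that you never introduce: write each $\mu^i=\nu^i_k*\eta^i_k$ (skip/keep every $k$-th digit), so that
\[
\mu^1*T_u\mu^2=\bigl(\nu^1_k*T_u\nu^2_k\bigr)*\bigl(\eta^1_k*T_u\eta^2_k\bigr).
\]
For $k$ large, \eqref{eq:sum-of-corr-dims-g1} and \eqref{eq:Lq-dim-ssm} give $D_q(\nu^1_k)+D_q(\nu^2_k)>1$, and the irrationality hypothesis feeds into Theorem \ref{thm:convolution-ssm-dim} (the $L^q$-dimension analogue of \eqref{eq:convolution-dim} from \cite{NPS12}) to yield $D_q(\nu^1_k*T_u\nu^2_k)=1$ for \emph{all} $u\neq 0$. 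The factor $\eta^1_k*T_u\eta^2_k$ is handled by the Erd\H{o}s--Kahane variant, Proposition \ref{prop:EK-convolutions}, which gives power Fourier decay for $u$ outside a zero-dimensional set $E_k$; only then does the criterion of Theorem \ref{thm:abs-cont-convolution} produce an $L^q$ density. Your proposal never invokes Fourier decay at all, so the criterion of Section \ref{sec:abs-cont-of-convolution} has nothing to bite on. Finally, the logical order is reversed: in the irrational case the paper deduces part (i) \emph{from} part (ii) by letting $q\searrow 1$ (via the continuity of $L^q$ dimensions at $q=1$, Theorem \ref{thm:Lq-dim-continuity}), rather than obtaining (ii) by improving (i).
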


\begin{rems}
\begin{enumerate}
\item The main class of examples to keep in mind, which motivated the theorem, are central Cantor sets and Hausdorff measures on them. Recall that the central Cantor set $C_\lam$ is constructed by starting with the interval $[0,1]$, replacing it by the union $[0,\lam]\cup [1-\lam,1]$, and continuing inductively with the same pattern. These sets are self-similar, and  the self-similar measure $\nu_\lam$ with weights $(1/2,1/2)$ coincides with Hausdorff measure of the appropriate dimension on $C_\lam$. Hence, in particular, the theorem says that when $\hdim C_{\lam_1}+\hdim C_{\lam_2}>1$, the convolution $\nu_{\lam_1} * T_u \nu_{\lam_2}$ is absolutely continuous (and hence also $\mathcal{L}_1(C_{\lam_1}+u C_{\lam_2})>0$) outside of a zero-dimensional set of possible exceptions, with an $L^2$ density when $\lam_1$ and $\lam_2$ are rationally independent.
\item Similarly to \eqref{eq:Lq-larger-than-1} in Theorem \ref{mainthm:projection-measures}, assumption \eqref{eq:sum-of-corr-dims-g1} simply says that the $L^q$ dimension of the product measure $\mu(\lam_1,a_1,p_1)\times\mu(\lam_2,a_2,p_2)$ is strictly larger than $1$ which, up to``strictly'', is also a necessary condition for the convolutions to have an $L^q$ density. Notice also that this condition always holds for $q=2$ if $p_i$ are close enough to the uniform weights in $\PP_{m_i}$, provided \eqref{eq:sum-of-dims-larger-1} holds.
\item Recall that in  \cite[Theorem 4.1]{NPS12} it is shown that $\nu_{1/3} * T_u \nu_{1/4}$ is singular for a dense, $G_\delta$ set of parameters $u$.  This shows that Theorem \ref{mainthm:convolutions} is sharp in the sense that the exceptional set may be uncountable and generic from the topological point of view. To the best of our knowledge, this is the first natural instance (as opposed to an ad-hoc example) in which the set of exceptions to a projection theorem is shown to be uncountable yet of zero Hausdorff dimension. Since dense $G_\delta$ sets have full packing dimension, it also shows that our results are intrinsically about Hausdorff dimension and cannot, in general, be extended to packing dimension (this is in contrast to the dimension results in \cite{Hochman13}, where the exceptional set is shown to be of zero \emph{packing} as well as Hausdorff dimension, and perhaps suggests that the super-critical case is intrinsically harder, at least in some cases).
\end{enumerate}
\end{rems}

Similar to Theorem \ref{mainthm:projection-sets}, as a corollary we obtain a result on sums of arbitrary self similar sets on the line (with no assumptions on homogeneity or separation).
\begin{mainthm} \label{mainthm:convolutions-sets}
If $A_1,A_2$ are self-similar sets on $\R$ such that $\hdim A_1+\hdim A_2>1$, then
\[
\hdim\{u: \mathcal{L}_1(A_1+ u A_2)=0\} = 0.
\]
\end{mainthm}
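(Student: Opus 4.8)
The plan is to deduce Theorem~\ref{mainthm:convolutions-sets} from Theorem~\ref{mainthm:convolutions} by passing from the arbitrary self-similar sets $A_1,A_2$ to homogeneous self-similar \emph{subsets} that satisfy the strong separation condition and capture almost all of the dimension. The crux is the following reduction lemma: if $A\subset\R$ is a self-similar set and $\eps>0$, then there exist $\lam\in(-1,0)\cup(0,1)$, an integer $m\ge 2$ and $a\in\R^m$ such that the homogeneous IFS $(\lam x+a_j)_{j=1}^m$ satisfies the strong separation condition, its attractor is contained in $A$, and $\log m/|\log\lam|>\hdim A-\eps$. Granting this, I would first apply the lemma to $A_1$ and $A_2$ with $\eps$ small enough that the resulting similarity dimensions $s_i=\log m_i/|\log\lam_i|$ still satisfy $s_1+s_2>1$; this is possible since $\hdim A_1+\hdim A_2>1$ together with $\hdim A_i\le 1$ forces each $\hdim A_i>0$, so $m_i\ge 2$ is available. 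Under strong separation this is exactly hypothesis \eqref{eq:sum-of-dims-larger-1}, and taking uniform weights $p_i=(1/m_i,\dots,1/m_i)$ makes $\hdim\mu(\lam_i,a_i,p_i)=s_i$ by Proposition~\ref{prop:sim-dim}(ii), so that \eqref{eq:sum-of-hausd-dims-g1} holds as well.

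To prove the lemma I would proceed in two steps. \emph{Separation.} Writing $\cI=(f_1,\dots,f_n)$ for an IFS generating $A$, I would first produce a strongly separated sub-self-similar set of dimension $>\hdim A-\eps/2$, using the fact that every self-similar set on the line contains strongly separated self-similar subsets of dimension arbitrarily close to its own. Concretely, one selects inside a high iterate of $\cI$ a sub-collection of cylinder maps $g_1,\dots,g_k$ (compositions $f_{w}$, $w\in\{1,\dots,n\}^N$) whose images $g_j(A)$ are pairwise disjoint while the total still certifies dimension $>\hdim A-\eps/2$; under strong separation the attractor of $(g_1,\dots,g_k)$ then has the required dimension by Proposition~\ref{prop:sim-dim}. \emph{Homogenization.} Given such a strongly separated system with ratios $\rho_j$, I would fix a rational probability vector $q=(q_1,\dots,q_k)$ and a large $N$ with $Nq_j\in\Z$, and consider all concatenations $g_{j_1}\circ\cdots\circ g_{j_N}$ in which each symbol $j$ occurs exactly $Nq_j$ times. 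All of these share the \emph{same} contraction ratio $\prod_j\rho_j^{Nq_j}$, so the resulting IFS is homogeneous; it inherits strong separation since distinct words give disjoint cylinders; and its similarity dimension equals
\[
\frac{\log\binom{N}{Nq_1,\dots,Nq_k}}{N\sum_j q_j|\log\rho_j|}\ \longrightarrow\ \frac{h(q)}{\sum_j q_j|\log\rho_j|}\qquad(N\to\infty)
\]
by Stirling's formula. Choosing $q_j$ close to $|\rho_j|^{t}$, where $t$ is the dimension of the separated system, drives the right-hand side to $t$, so for suitable $q,N$ the similarity dimension exceeds $t-\eps/2>\hdim A-\eps$, proving the lemma.

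With the homogeneous strongly separated subsystems $(\lam_i x+a_{ij})_{j=1}^{m_i}$ in hand and uniform weights, Theorem~\ref{mainthm:convolutions}(i) supplies a set $E\subset\R$ with $\hdim E=0$ such that $\mu(\lam_1,a_1,p_1)*T_u\mu(\lam_2,a_2,p_2)\ll\mathcal{L}_1$ for every $u\in\R\setminus E$. Since this convolution is a probability measure supported on $B_1+uB_2$, where $B_i\subset A_i$ is the attractor of the $i$-th subsystem, its absolute continuity forces $\mathcal{L}_1(B_1+uB_2)>0$, and hence $\mathcal{L}_1(A_1+uA_2)>0$, for all $u\in\R\setminus E$. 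Therefore $\{u:\mathcal{L}_1(A_1+uA_2)=0\}\subset E\cup\{0\}$, which has Hausdorff dimension zero, as claimed.

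The main obstacle is the separation step of the lemma: extracting strongly separated self-similar subsets of nearly full dimension from a self-similar set that may carry arbitrary overlaps. The homogenization step and the final assembly are routine once separation is available, so I would either invoke the appropriate known result or carry out the cylinder-selection argument in detail, taking care that overlaps do not prevent the existence of a large well-separated family of cylinders at small scales.
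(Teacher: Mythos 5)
Your proposal is correct and follows essentially the same route as the paper: approximate each $A_i$ from inside by a homogeneous, strongly separated self-similar subset of nearly full dimension, then apply Theorem~\ref{mainthm:convolutions} to the natural measures on these subsets. The only difference is that the paper outsources your reduction lemma to known results (\cite[Lemma 3.4]{Orponen12} for the separation step and \cite[Proposition 6]{PeresShmerkin09} for homogenization), whereas you sketch the arguments directly; your sketches match the standard proofs of those cited results.
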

The above can also be seen in the light of Marstrand's projection Theorem (Proposition \ref{prop:marstrand}), since up to a smooth reparametrization and an affine change of coordinates, $(A_1+ u A_2)_{u\in\R\setminus\{0\}}$ are the orthogonal projections of the product set $A_1\times A_2$ in non-principal directions.

\subsection{Strategy of proofs}

The general strategy of the proofs follows the scheme of \cite{Shmerkin13}, although there are some new ingredients as well. All of the measures we consider can be expressed as an infinite convolution of atomic measures (this is the reason why we need to assume the iterated function systems are homogeneous). This allows us to decompose our measure of interest $\mu_u, u\in U$, as a convolution $\nu_u*\eta_u$, in such a way that:
\begin{enumerate}
\item For $u$ outside of a ``small'' exceptional set $E'$, the measure $\nu_u$ has full dimension for an appropriate notion of dimension (Hausdorff or $L^q$ dimension for a suitable value of $q$). For this we apply, depending on the context, any of a number of recent results on the dimension of measures of dynamical origin \cite{NPS12, HochmanShmerkin12, Hochman13}.
\item In the case where the previous step applies to Hausdorff dimension, we employ a result on the continuity of the dimension of a self-similar measure (which is developed in Section \ref{sec:dimensions-of-ssm} and may be of independent interest), to show that off the same exceptional set $E'$, the measure $\nu_u$ has \emph{almost} full $L^q$-dimension for some $q>1$. This step is a new ingredient, and is what ultimately allows us to gain some information about the densities.
\item For $u$ outside of another zero-dimensional set $E''$, the measure $\eta_u$ has power Fourier decay: $|\eta_u(\xi)| \le C\, |\xi|^{-\sigma}$, for some $C,\sigma>0$ which may depend on $u$. This is achieved via a number of modifications of what has become known as the ``Erd\H{o}s-Kahane argument'', which deals with the special case in which $\eta_u$ is the family of Bernoulli convolutions. The realization that the Erd\H{o}s-Kahane technique is very flexible and can be adapted to such a wide array of situations is perhaps one of the main innovations of this paper.
\item To conclude the proofs, we set $E=E'\cup E''$ and use the following result valid for arbitrary Borel measures: if $\eta$ has power Fourier decay with exponent $\sigma$, and the $L^q$ dimension of $\nu$ is sufficiently close to $1$ (in terms of $\sigma$), then $\nu*\eta$ is absolutely continuous with an $L^q$ density. This is a sharpening of \cite[Lemma 2.1]{Shmerkin13}, and is stated more precisely in Section \ref{sec:abs-cont-of-convolution}.
\end{enumerate}

%%%%%%%%%%%%%%%%%%%%%%%%%%%%%%%%%%%%%%%%%%%%%%%%%%%%%

\section{Notation and preliminaries}
\label{sec:preliminaries}
\subsection{Notation}

We use Landau's $O$ notation: $Y=O(X)$ means $Y\le CX$ for some constant $C>0$. By $Y=\Omega(X)$ we mean $X=O(Y)$ and we write $Y=\Theta(X)$ to denote that both $Y=O(X)$ and $X=O(Y)$ hold. When the implicit constant $C$ depends on some other parameters, this will be denoted by a subscript in the $O$ notation, for example $Y=O_k(X)$ means that $Y< C X$ for a constant $C$ that is allowed to depend on $k$.

The integer interval $\{1,2,\ldots,b\}$ is denoted by $[b]$.

For simplicity of notation, logarithms are always to base $2$.

\subsection{Fourier transforms of self-similar measures} \label{subsec:ssm}

The Fourier transform of $\mu\in\mathcal{P}_d$ is
\[
\wh{\mu}(\xi) = \int e^{i\pi \la x,\xi\ra} d\mu(x),
\]
where $\la \cdot,\cdot \ra$ is the standard inner product on $\R^d$. (We choose this slightly unusual normalization for
technical reasons, which will become apparent in the next section.)
Denote
\[
\cD_d = \left\{ \mu\in\mathcal{P}_d: |\wh{\mu}(\xi)| = O_\mu(|\xi|^{-\sigma}) \text{ for some $\sigma>0$}  \right\}.
\]

Let $\mu=\mu(T,a,p)$ be a self-similar measure for a homogeneous IFS on $\R^d$.  Then $\mu$ can be realized as the distribution of a random sum:
\begin{equation} \label{eq:convolution-random-sum}
\mu \sim \sum_{n=1}^\infty T^{n-1} A_n,
\end{equation}
where the $A_n$ are i.i.d.\ Bernoulli random variables with $P(A_n=a_j)=p_j$. This shows that $\mu$ is an infinite convolution of Bernoulli random variables, and we obtain the following infinite product formula for its Fourier transform (which can also be deduced by inductively applying the definition of self-similarity):

\begin{equation} \label{eq:FT-ssm}
\widehat{\mu}(\xi) = \prod_{n=0}^\infty \Phi_n(\xi),
\end{equation}
where
\begin{equation*}
\Phi_n(\xi) = \sum_{j\in [m]} p_j\, \exp(i \pi  \la T^n a_j , \xi\ra ).
\end{equation*}

\subsection{Self-similar measures as convolutions}
\label{subsec:decomposition-large-and-small-part}

Let $\mu=\mu(T,a,p)$. Since, by \eqref{eq:convolution-random-sum}, $\mu$ is the distribution of an absolutely convergent random sum, we can ``split and rearrange'' the series to express $\mu$ as a suitable convolution of two measures. For us, the relevant decomposition is to ``skip every $k$-th term'' and ``keep every $k$-th term'', for a sufficiently large integer $k$. More precisely, using the notation of \eqref{eq:convolution-random-sum}, we define
\begin{align*}
\nu_k &\sim   \sum_{k\nmid n} T^{n-1} A_n,\\
\eta_k &\sim  \sum_{k\mid n} T^{n-1} A_n.
\end{align*}
Both $\nu_k$ and $\eta_k$ are still self-similar measures. Indeed, $\eta_k=\mu(T^k,a,p)$. The explicit expression for $\nu_k$ is more cumbersome to write down, but note that $\nu_k\sim\sum_{\ell=0}^\infty T^{k\ell} B_\ell$, where $B_\ell = \sum_{j=1}^{k-1} T^{j-1} A_{j+k\ell}$. As the $B_\ell$ are  i.i.d.\ Bernoulli random variables, it is now clear that $\nu_k$ is indeed self-similar, and one can easily read off the translations and probabilities. In particular, we have

\begin{lemma} \label{lem:sim-dim-large-part}
Let $\mu,\nu_k$ be as above, and write $s,s_k$ for their respective similarity dimensions. Then $s_k=(1-1/k)s$.
\end{lemma}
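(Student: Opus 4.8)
The plan is to read off the self-similar structure of $\nu_k$ from the regrouping already indicated in the text, and then apply the definition $s(T',m',p')=h(p')/\log(1/r')$ directly.

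First I would make the IFS for $\nu_k$ explicit. Writing $n=j+k\ell$ with $j\in\{1,\dots,k-1\}$ and $\ell\ge 0$ enumerates exactly the indices $n$ with $k\nmid n$, and it groups the surviving terms of the random sum into the blocks $B_\ell=\sum_{j=1}^{k-1}T^{j-1}A_{j+k\ell}$, giving $\nu_k\sim\sum_{\ell=0}^\infty (T^k)^\ell B_\ell$ with the $B_\ell$ i.i.d. Hence $\nu_k$ is the invariant measure of the homogeneous IFS whose common linear part is $T^k$ and whose maps are indexed by tuples $I=(i_1,\dots,i_{k-1})\in[m]^{k-1}$, with translations $b_I=\sum_{j=1}^{k-1}T^{j-1}a_{i_j}$ and product weights $p'_I=\prod_{j=1}^{k-1}p_{i_j}$. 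In other words $\nu_k=\mu(T^k,(b_I)_I,p')$, and $s_k$ is to be computed from this presentation on $m^{k-1}$ formal symbols; as is standard for the similarity dimension, each tuple $I$ is kept as a separate symbol regardless of whether distinct tuples happen to produce coincident translations $b_I$.

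Next I would compute the two ingredients of $s_k=h(p')/\log(1/r_k)$. Since $T$ contracts by $r$, the map $T^k$ contracts by $r_k=r^k$, so $\log(1/r_k)=k\log(1/r)$. The weight $p'$ is precisely the law of $k-1$ independent $p$-distributed symbols, so additivity of Shannon entropy under independence gives $h(p')=(k-1)h(p)$; concretely this follows by expanding $-\sum_I p'_I\log p'_I=-\sum_I(\prod_j p_{i_j})\sum_j\log p_{i_j}$ and using $\sum_i p_i=1$. Combining the two,
\[
s_k=\frac{h(p')}{\log(1/r_k)}=\frac{(k-1)\,h(p)}{k\log(1/r)}=\Bigl(1-\tfrac1k\Bigr)\frac{h(p)}{\log(1/r)}=\Bigl(1-\tfrac1k\Bigr)s,
\]
which is the claim. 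There is no real obstacle here: the only point deserving care is verifying the regrouping of indices into the blocks $B_\ell$ (so that $\nu_k$ is genuinely self-similar with contraction $T^k$), and keeping the full product weight on $m^{k-1}$ symbols rather than merging coincident maps; once these are in place the entropy computation is routine.
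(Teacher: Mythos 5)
Your proof is correct and follows essentially the same route as the paper: identify $\nu_k$ as the homogeneous self-similar measure $\mu(T^k,(b_I)_I,p')$ on $m^{k-1}$ symbols with product weights, compute $h(p')=(k-1)h(p)$ and contraction ratio $r^k$, and divide. The extra care you take about keeping coincident translations as separate symbols is a sensible clarification but not a departure from the paper's argument.
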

\begin{proof}
Let $\mu=\mu(T,a,p)$, and note that $\nu_k=\mu(T^k,a^{(k)},p^{(k)})$, where $p^{(k)}\in \PP_{m^{k-1}}$ and
\[
p^{(k)}=  (p_{i_1}\cdots p_{i_{k-1}})_{(i_1\ldots i_{k-1})\in [m]^{k-1}}.
\]
Hence $h(p^{(k)})= (k-1)h(p)$, and, writing $r$ for the contraction ratio of $T$,
\[
s_k = \frac{(k-1)h(p)}{k\log(1/r)} = \left(1-\frac{1}{k}\right) s.
\]
\end{proof}

\subsection{Projection maps}
\label{subsec:projection-IFS}

Self-similar measures can also be seen as projections of Bernoulli measures on code spaces. More precisely, if $a\in\R^{md}$, then the self-similar measure $\mu(T,a,p)$ is the push-down of the Bernoulli measure $p^\N$ on $[m]^\N$, via the map
\[
\Pi(\iii) = \Pi_{T,a}(\iii) = \sum_{n=1}^\infty T^{n-1}a_{i_n}.
\]
The map $\Pi$ is known as the \emph{projection} or \emph{coding} map for the IFS. Indeed, this is just another way of expressing \eqref{eq:convolution-random-sum}. Projection maps will be repeatedly used in the sequel without further reference.

%%%%%%%%%%%%%%%%%%%%%%%%%%%%%%%%%%%%%%%%%%%%%%%%%%%%%%%%%%%%%%%%%%

\section{Variants of the Erd\H{o}s-Kahane argument}
\label{sec:EK}
\subsection{Introduction}

Recall that Erd\H{o}s \cite{Erdos39} proved that Bernoulli convolutions are in $\mathcal{D}_1$ outside of a zero-measure set of exceptions, and Kahane \cite{Kahane71} pointed out that the argument in fact shows that the exceptional set has zero Hausdorff dimension. In this section we prove a number of variants of this, suited to our main results. In each case, the aim is to show that, in a given parametrized family of measures $\{ \mu_u\}_{u\in U}$ on the real line, the set of parameters $u$ such that $\mu_u$ has no power Fourier decay ($\mu_u\notin\mathcal{D}_1)$ has zero Hausdorff dimension. The arguments have many points in common with the original proof of Erd\H{o}s-Kahane as presented in \cite{PSS00}, but each of them has its own peculiarities.

\subsection{Families of self-similar measures with varying translations}

The next result is a key step in the proof of Theorem \ref{mainthm:parametrized}; here the parameter comes in the translations, rather than the contraction ratio. A special case of this was obtained in \cite[Theorem 1.4]{DFW07}; although the proof is very similar, we provide full details for completeness, and because it provides a blueprint for the slightly more involved variants that we will encounter later.

\begin{prop} \label{prop:EK-translations}
Suppose $m\ge 3$. There exists a set $E\subset (0,1)\times \R$ of zero Hausdorff dimension such that $\mu(\lambda,a,p)\in\cD_1$ whenever
 \[
  \left(\lambda,\frac{a_k-a_i}{a_j-a_i}\right) \notin E \quad\text{ for some } a_i<a_j<a_k.
 \]
\end{prop}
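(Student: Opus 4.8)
The plan is to combine the infinite product formula \eqref{eq:FT-ssm} for $\widehat{\mu}$ with a two‑parameter version of the Erd\H{o}s--Kahane counting argument, treating $\lambda$ and the ratio $t:=(a_k-a_i)/(a_j-a_i)>1$ as the two free parameters. First I would reduce to three translations. Membership in $\cD_1$ is invariant under affine changes of the ambient coordinate (an affine map multiplies $\widehat{\mu}$ by a unimodular factor and rescales the frequency), so nothing is lost there. Moreover, writing $\Phi_n(\xi)=\sum_{l\in[m]}p_l\exp(i\pi\lambda^n a_l\xi)$, a direct computation gives
\[
1-|\Phi_n(\xi)|^2 \;=\; \sum_{l<l'} 2\,p_l p_{l'}\bigl(1-\cos(\pi\lambda^n\xi(a_{l'}-a_l))\bigr),
\]
a sum of \emph{nonnegative} terms, so I may discard all pairs except those involving the values $a_i<a_j<a_k$. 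Setting $w_n:=\lambda^n\xi\,(a_j-a_i)$, so that $\lambda^n\xi(a_k-a_i)=t\,w_n$ and $w_{n+1}=\lambda w_n$, and letting $d(x)$ denote the distance from $x$ to the nearest even integer, the elementary bound $1-\cos(\pi x)\ge 2\,d(x)^2$ yields
\[
1-|\Phi_n(\xi)|^2 \;\ge\; c\bigl(d(w_n)^2 + d(t\,w_n)^2\bigr),\qquad c:=4\,p_i\min(p_j,p_k)>0.
\]
Using $\prod_n|\Phi_n|^2\le\exp\bigl(-\sum_n(1-|\Phi_n|^2)\bigr)$ and truncating at $N\approx|\log\xi|/|\log\lambda|$, power Fourier decay of $\mu(\lambda,a,p)$ follows once I establish that, for $(\lambda,t)$ outside a zero‑dimensional set, the orbit sums $\sum_{n=0}^{N}\bigl(d(w_n)^2+d(t\,w_n)^2\bigr)$ grow linearly in $N$, uniformly in the starting value $w_0$ (equivalently in $\xi$).

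Next I would translate the failure of this growth into a symbolic coding. If $(\lambda,t)$ fails to give power decay, there are frequencies $\xi_k\to\infty$ and scales $N_k\to\infty$ along which the orbit sum is $o(N_k)$; by Chebyshev this forces $d(w_n)\le\eta$ \emph{and} $d(t\,w_n)\le\eta$ for all $n\le N_k$ outside a set of $n$ of vanishing density, with $\eta\to0$ as the putative decay exponent tends to $0$. At the good times I record $a_n$, the nearest even integer to $w_n$, and $b_n$, the nearest even integer to $t\,w_n$. The recursions $w_{n+1}=\lambda w_n$ and $t\,w_{n+1}=\lambda t\,w_n$ then show that two consecutive values of $(a_n)$ pin down $\lambda$, while the pair $(a_n,b_n)$ pins down $t=(b_n+O(\eta))/(a_n+O(\eta))$, each to precision $O(\eta/|a_n|)$ which is sharpest at the large (early) terms of the orbit.

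The heart of the argument is the Erd\H{o}s--Kahane estimate on the number of admissible codings. Given the approximate values of $\lambda$ and $t$ inherited from the past, the next pair $(a_{n+1},b_{n+1})$ is confined near $(\lambda a_n,\lambda b_n)$ up to an error $O(\eta)$, so there are boundedly many choices; the crucial quantitative point is that the number of times $n\le N$ at which there is genuinely more than one choice is at most $\gamma(\eta)N$ with $\gamma(\eta)\to0$ as $\eta\to0$. Consequently the number of admissible sequence pairs of length $N$ is at most $M(\eta)^N$ with $M(\eta)\to1$, while each such pair confines $(\lambda,t)$ to a box of side $O(\theta^N)$ for a fixed $\theta\in(0,1)$. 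Covering the exceptional set by these boxes bounds its upper box (hence Hausdorff) dimension by $\log M(\eta)/|\log\theta|\to0$; since the failure of \emph{any} power decay forces the above with $\eta$ arbitrarily small, one concludes $\hdim E=0$. The hypothesis $m\ge3$ enters precisely here: a third translation is what produces the genuine second parameter $t$, and tracking the second sequence $(t\,w_n)$ is what confines $(\lambda,t)$ to a box rather than to a strip. With only the single sequence $(w_n)$ one would constrain $\lambda$ alone and obtain a set of the form (bad $\lambda)\times\R$, of dimension $\ge1$.

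I expect the main obstacle to be this counting lemma: proving that the branching factor $M(\eta)\to1$ while correctly handling the coupled two‑dimensional recursion for $(a_n,b_n)$, the positive‑density set of "bad" times at which the orbit is permitted to stray far from the even integers, and—most delicately—the uniformity in the starting point $\xi$, so that the estimate delivers a genuine bound $|\widehat{\mu}(\xi)|\le C|\xi|^{-\sigma}$ for all large $\xi$ rather than mere decay along a subsequence. The affine reduction, the pairwise lower bound, and the passage from orbit sums to a box covering are comparatively routine; the combinatorial control of admissible codings is where the real work lies.
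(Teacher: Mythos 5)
Your proposal follows essentially the same route as the paper: normalize to three translation points, use the infinite product formula to reduce power Fourier decay to showing that the two orbits $t\lambda^{-n}$ and $tu\lambda^{-n}$ cannot both stay near integers for almost all $n$, and then run a two-parameter Erd\H{o}s--Kahane counting argument in which the second-order recursion for the nearest-integer sequences pins down all but $O(\delta N)$ of the terms, yielding $\exp(O(\delta N))$ boxes of exponentially small side covering the exceptional $(\lambda,u)$ set. Your identification of where $m\ge 3$ enters, of the uniformity in the frequency $\xi$ as the delicate point, and of the coupled coding of the two sequences all match the paper's Lemma \ref{lem:EK-translations} and its deduction of Proposition \ref{prop:EK-translations}.
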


The proof of the theorem depends on a combinatorial lemma, which we state and prove first. Here and below, $\|t\|$ denotes the distance from a real number $t$ to the nearest integer.

\begin{lemma} \label{lem:EK-translations}
Fix a compact set $H=[\lam_0,\lam_1]\times [U_0,U_1]\subset (0,1)\times (0,\infty)$. There exists a constant $c_H>0$ such that, for any $N\in\N$ and $\delta\in (0,1/2)$, the set
\begin{equation} \label{eq:covered-set}
\left\{ (\lam,u)\in H : \max_{t\in [1,\lam^{-1}]}\frac1N\left|\left\{ n\in [N] : \max(\| t \lam^{-n}\|,\|tu\lam^{-n}\|) \le c_H \right\}\right| > 1- \delta\right\}
\end{equation}
can be covered by $\exp(O_H(\delta N))$ balls of radius $\lam_1^N$.
\end{lemma}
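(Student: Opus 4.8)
The plan is to carry out a counting argument in the spirit of Erd\H{o}s--Kahane, controlling how the fractional parts $\|t\lam^{-n}\|$ and $\|tu\lam^{-n}\|$ can remain small along a positive-density set of scales $n\in[N]$. The key observation is that for fixed $t$ the sequence $x_n = t\lam^{-n}$ satisfies the recursion $x_{n+1} = \lam^{-1}x_n$, so once $\lam$ is (approximately) known, the requirement $\|x_n\|\le c_H$ constrains the integer part of $x_{n+1}$ to a bounded number of possibilities. Concretely, I would first show that if $\|x_n\|\le c_H$ then $x_{n+1} = \lam^{-1}(\lfloor x_n\rceil + \theta_n)$ with $|\theta_n|\le c_H$, so that the nearest integer to $x_{n+1}$ is determined by $\lfloor x_n\rceil$ and $\lam$ up to an additive error of size $O_H(c_H)$; choosing $c_H$ small enough relative to the compact range $[\lam_0,\lam_1]$, this error is $<1/2$, so $\lfloor x_{n+1}\rceil$ takes only $O_H(1)$ values given $\lfloor x_n\rceil$.

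Next I would encode a point $(\lam,u)$ in the set \eqref{eq:covered-set} by its \emph{symbolic data}: the sequence of nearest integers $(\lfloor t\lam^{-n}\rceil)_{n\le N}$ and $(\lfloor tu\lam^{-n}\rceil)_{n\le N}$ for the maximizing $t\in[1,\lam^{-1}]$, together with a bounded amount of initial data. At each step $n$, if $n$ belongs to the ``good'' set where both fractional parts are $\le c_H$ (there are at least $(1-\delta)N$ such $n$ by hypothesis), the next pair of integer parts is determined up to $O_H(1)$ choices; at the $O(\delta N)$ ``bad'' steps we simply allow all possible integer parts, but since $x_n$ and $u x_n$ grow at the controlled geometric rate governed by $[\lam_0,\lam_1]$ and $[U_0,U_1]$, the number of admissible integer values at any single step is bounded by $\lam_1^{-1}\cdot O_H(1) = O_H(1)$ as well. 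Thus the total number of possible symbolic sequences is $O_H(1)^{(1-\delta)N}\cdot O_H(1)^{\delta N} = \exp(O_H(N))$ — but I must be more careful: the exponential rate along good steps must be made to contribute a factor that is \emph{subexponential} in $N$, i.e.\ $\exp(O_H(\delta N))$ rather than $\exp(O_H(N))$.

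This sharpening is the crux, and it is where the argument becomes delicate. The resolution is that along a good step the \emph{number of branches is bounded by an absolute constant independent of the fine scale}, but the real gain comes from the fact that the symbolic data determines $(\lam,u)$ up to precision $\lam_1^N$: indeed, knowing $\lfloor t\lam^{-n}\rceil$ for all $n$ pins down $\lam^{-n}$, hence $\lam$, to within $O(\lam_1^N)$, and similarly the second sequence pins down $u$. So the covering count equals the number of admissible symbolic sequences, and I must argue that good steps contribute only a bounded number of \emph{distinct} continuations in a way that, after accounting for the $(1-\delta)N$ forced transitions, the free entropy is concentrated in the $\delta N$ bad steps. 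The clean way to see this is that at a good step the transition is \emph{essentially deterministic} (the $O_H(1)$ constant can be taken to be, say, a fixed integer $L_H$, but crucially the branching factor that survives after collapsing the redundancy from knowing $\lam$ is just $1$ on most good steps); only the $\delta N$ bad steps and a bounded number of exceptional good steps where the integer part genuinely bifurcates contribute to the entropy. Making this precise — isolating exactly which good steps can branch and showing their total free choice is $\exp(O_H(\delta N))$ — is the main obstacle, and I expect it to hinge on a careful bookkeeping of how the recursion $x_{n+1}=\lam^{-1}x_n$ together with the constraint $\|x_n\|\le c_H$ leaves at most one admissible value of $\lfloor x_{n+1}\rceil$ except when $\lam^{-1}\lfloor x_n\rceil$ lies within $O_H(c_H)$ of a half-integer.

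Finally, I would record the radius bound: since the symbolic data determines $(\lam,u)$ to within $O(\lam_1^N)$, each symbolic sequence corresponds to a set of diameter $O(\lam_1^N)$, which can be covered by a single ball of radius $\lam_1^N$ (absorbing constants by shrinking $\lam_1$ slightly or enlarging the count by a bounded factor). Combining with the entropy estimate $\exp(O_H(\delta N))$ on the number of admissible sequences yields the stated covering by $\exp(O_H(\delta N))$ balls of radius $\lam_1^N$, completing the proof.
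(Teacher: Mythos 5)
Your overall strategy (encode each parameter pair by the sequences of nearest integers to $t\lam^{-n}$ and $tu\lam^{-n}$, show that good steps are essentially forced, and recover $(\lam,u)$ to precision $\lam_1^N$ from the terminal integers) is the correct Erd\H{o}s--Kahane scheme, and it is the one the paper follows. However, there is a genuine gap at exactly the point you flag as ``the crux'': your transition rule is first order, i.e.\ you try to determine $K_{n+1}=\lfloor t\lam^{-(n+1)}\rceil$ from $K_n$ ``once $\lam$ is (approximately) known''. This is circular, because $\lam$ is the parameter being covered and a priori is only known to lie in $[\lam_0,\lam_1]$. Given only $K_n$, the quantity $\lam^{-1}(K_n+\e_n)$ ranges over an interval of length $\Theta(K_n)=\Theta(\lam^{-n})$ as $\lam$ varies over $[\lam_0,\lam_1]$, so the number of admissible values of $K_{n+1}$ is exponentially large in $n$, not $O_H(1)$; your claim that even bad steps contribute only $\lam_1^{-1}\cdot O_H(1)$ choices fails for the same reason. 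You correctly sense that consistency of the whole sequence with a single $\lam$ must rescue the count, but you explicitly leave that step unresolved, and it is the whole content of the lemma.

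The missing device is the second-order recursion: two consecutive integers $K_n,K_{n+1}$ already determine $\theta=\lam^{-1}$ to precision $O(1/K_n)$ via $\theta\approx K_{n+1}/K_n$, and feeding this back into $K_{n+2}+\e_{n+2}=\theta(K_{n+1}+\e_{n+1})$ gives
\[
\bigl| K_{n+2}-K_{n+1}^2/K_n \bigr| = O\bigl(\max(|\e_n|,|\e_{n+1}|,|\e_{n+2}|)\bigr),
\]
and likewise for $L_n=\lfloor tu\lam^{-n}\rceil$ (see \cite[Lemma 6.3]{PSS00}). Consequently the pair $(K_n,L_n),(K_{n+1},L_{n+1})$ determines $(K_{n+2},L_{n+2})$ up to $O_H(1)$ choices unconditionally (since $|\e_i|,|\delta_i|\le 1/2$ always), and uniquely once $c_H$ is small and $n,n+1,n+2$ are all good indices. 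With this in hand the count is immediate: fix the good set $A$ (there are $\exp(O(\delta N))$ choices of $A$ by Stirling, a step you also omit), observe that branching occurs only at the $O(\delta N)$ indices lacking two good predecessors, and conclude that there are $\exp(O_H(\delta N))$ admissible sequences, hence that many balls of radius $\lam_1^N$ via $u\approx L_N/K_N$ and $\theta\approx L_N/L_{N-1}$. Without the second-order relation, the ``careful bookkeeping'' you defer to cannot be carried out, so as written the proof is incomplete.
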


The lemma says that the set of  parameters $(\lam,u)$ such that, for some $t\in [1,\lam^{-1}]$, both numbers $t\lam^{-n}$ and $tu\lam^{-n}$ are very close to an integer for ``almost all'' values of $n\in [N]$, is very small. It will turn out that the ``bad'' set for the purposes of Proposition \ref{prop:EK-translations} can be controlled by sets of the kind appearing in the lemma. Roughly speaking, each time one of $t\lam^{-n}$ or $tu\lam^{-n}$ is far from an integer, the Fourier transform in question at the frequency $\xi=t \lam^{-N}$ will drop by a constant factor.

\begin{proof}[Proof of Lemma \ref{lem:EK-translations}]
We follow the scheme of \cite[Proposition 6.1]{PSS00}, with suitable variants. All the constants implicit in the $O(\cdot)$ notation are allowed to depend on $H$. During the proof it is always understood that $(\lam,u)\in H$ and $t\in [1,\lam^{-1}]$.

Set $\theta=\lambda^{-1}$. For $n\in [N]$, write
\begin{align*}
t\theta^n &= K_n + \e_n,\\
t u\theta^n &= L_n + \delta_n,
\end{align*}
where $K_n,L_n$ are integers and $\e_n,\delta_n$ are in $[-1/2,1/2)$. All these numbers depend on $t, u$ and $\theta$. Note that
\begin{align*}
u&\in B(L_N/K_N, O(1/K_N)) \subset B(L_N/K_N, O(\lam_1^N)),\\
\theta&\in B(L_{N}/L_{N-1}, O(1/L_N)) \subset B(L_N/L_{N-1}, O(\lam_1^N)).
\end{align*}
Hence we need to estimate the number of possible sequences $(K_i,L_i)_{i\in [N]}$. A calculation, for which the reader is referred to \cite[Lemma 6.3]{PSS00}, shows that
\begin{align*}
| K_{n+2} - K_{n+1}^2/K_n | &= O(\max(|\e_n|,|\e_{n+1}|,|\e_{n+2}|)),\\
| L_{n+2} - L_{n+1}^2/L_n | &= O(\max(|\delta_n|,|\delta_{n+1}|,|\delta_{n+2}|)).
\end{align*}
\begin{sloppypar}
\noindent This shows that:
\begin{enumerate}[(i)]
\item Given $(K_n,L_n), (K_{n+1},L_{n+1})$, there are at most $O(1)$ possible values for $(K_{n+2},L_{n+2})$, uniformly in $t,u,\theta$. There are also $O(1)$ possible values for $(K_1,L_1), (K_2,L_2)$.
\item There is a constant $c_H>0$, such that if $\max(|\e_i|,|\delta_i|) < c_H$ for $i=n,n+1,n+2$, then $(K_n,L_n), (K_{n+1},L_{n+1})$ uniquely determine $(K_{n+2},L_{n+2})$, again independently of $t,u,\theta$.
\end{enumerate}
\end{sloppypar}
\noindent We claim that, for each fixed set $A\subset [N]$ with $|A|\ge (1-\delta)N$, the set
\[
\left\{ (K_n,L_n)_{n\in [N]}:\max(\| t \theta^{n}\|,\|tu\theta^{n}\|) \le c_H \text{ for some } \lam,u,t \text{ and all }n\in A \right\}
\]
has  cardinality $\exp(O(\delta N))$. Indeed, fix such an $A$ and let $\widetilde{A}=\{ i\in [3,N]: i-2, i-1, i\in A\}$; then $|\widetilde{A}|\ge (1-3\delta)N-3$. If we set
\[
\Lambda_j = (K_i,L_i)_{i\in [j]},
\]
then (i), (ii) above show that $|\Lambda_{j+1}|=|\Lambda_j|$ if $j\in \widetilde{A}$ and $|\Lambda_{j+1}|=O(|\Lambda_j|)$ otherwise. Hence $|\Lambda_N| \le O(1)^{3\delta N}$, as claimed.

Since the number of subsets $A$ of $[N]$ of size $j\ge (1-\delta)N$ is also bounded by $\exp(O(\delta N))$ (using e.g. Stirling's formula), we conclude that there are $\exp(O(\delta N))$ pairs $(K_N,L_N)$, and also $\exp(O(\delta N))$ pairs $(L_N,L_{N-1})$, such that $\max(|\e_n|,|\delta_n|) < 1/c_H$ for at least $(1-\delta)N$ values of $n\in [N]$. Hence the set \eqref{eq:covered-set} can be covered by $\exp(O(\delta N))$ balls of radius $\lam_1^N$, and this finishes the proof.
\end{proof}

We can now conclude the proof of the proposition.

\begin{proof}[Proof of Proposition \ref{prop:EK-translations}]
Let $H_M = [1/M,1-1/M]\times [1/M,M]$. Let $c_M=c_{H_M}$ be the constant given by Lemma \ref{lem:EK-translations} for this set, and denote the set given in \eqref{eq:covered-set} with $\delta=1/\ell$ by $E_{M,\ell,N}$. Define
\[
 E = \bigcup_M \bigcap_\ell \limsup_N E_{M,\ell,N}.
\]
Let us first show that $\hdim E=0$. For this it is enough to show that, for any fixed $M$,
\begin{equation} \label{eq:dim_0_in_limit}
\lim_{\ell\to\infty} \hdim \left(\limsup_N E_{M,\ell,N}\right) =0.
\end{equation}
By Lemma \ref{lem:EK-translations}, $E_{M,\ell,N}$ can be covered by $\exp(O(N/\ell))$ balls of radius $(1-1/M)^N$. Hence
\[
\cH^s\left(\bigcup_{N=N_0}^\infty  E_{M,\ell,N} \right) \le \sum_{N=N_0}^\infty \exp(O(N/\ell))(1-1/M)^{sN} \le \exp(-\Omega(N_0)),
\]
provided $s>O_M(1/\ell)$. (Recall that $Y=\Omega(N_0)$ means that $Y\ge c\,N_0$ for some constant $c>0$.) Thus $\hdim \left(\limsup_N E_{M,\ell,N}\right)=O_M(1/\ell)$, showing that \eqref{eq:dim_0_in_limit} holds, and so that $\hdim(E)=0$.

Now suppose $(\lambda,a)$ is such that $(\lambda,u)\notin E$ where $u=(a_k-a_i)/(a_j-a_i)$ and $a_i<a_j<a_k$. Since the $H_M$ exhaust $(0,1)\times (0,\infty)$, we can fix $M$ such that $u\in H_M$. The definition of $E$ then implies that there are $\ell, N_0\in\N$ such that, for any $N\ge N_0$,
\begin{equation} \label{eq:EK-translations-comb}
  \max_{t\in [1,\theta]}\frac1N\left|\left\{ n\in [N] : \max(\| t \lam^{-n}\|,\|tu\lam^{-n}\|) \le c_M \right\}\right| < 1- 1/\ell.
\end{equation}

Since $\mu(\lambda,a,p)$ is homothetic to $\mu(\lambda,b,p)$ where $b_j=(a_j-a_0)/(a_1-a_0)$, we may assume, after relabeling and without loss of generality, that $a_0=0, a_1=1$ and $a_2=u$.

Let $\mu=\mu(\lambda,b,p)$. Write $\xi=\lambda^{-N}t$ with $1\le t\le \lambda^{-1}$. Then, using the expression \eqref{eq:FT-ssm} for the Fourier transform of a self-similar measure,
\[
 \left| \wh{\mu}(\xi) \right| \le \prod_{n=0}^{N-1} \left| \sum_{j\in [m]} p_j\, \exp(i\pi \lambda^{n-N} t b_j )\right|.
\]
Since $tb_0=0, t b_1=t$ and $t b_2=tu$, we conclude from \eqref{eq:EK-translations-comb} that there is $\rho=\rho(c_M,p)>0$ such that
\[
  \left| \wh{\mu}(\xi) \right| \le \exp(-\rho N/\ell) = O(|\xi|^{-\rho|\log\lam|/\ell}).
\]
\end{proof}

\subsection{Projections of self-similar measures}

We next establish another result of Erd\H{o}s-Kahane type, which will be needed to establish Theorem \ref{mainthm:projection-measures}.

\begin{prop} \label{prop:EK-projections}
Fix $\lambda\in \cS_2\setminus\R$, $m\ge 2$ and $a\in\R^{2m}$ which is not of the form $a=(v,\ldots,v)$ for $v\in\R^2$. There exists a set $E\subset [0,\pi)$ of Hausdorff dimension $0$, such that for all $\beta\in [0,\pi)\setminus E$ and all $p\in\PP_m,$
\[
P_\beta\mu(\lambda,a,p)\in \cD_1,
\]
where $P_\beta$ is the projection onto a line making angle $\beta$ with the $x$-axis.
\end{prop}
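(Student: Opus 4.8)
The plan is to mirror exactly the structure of Lemma~\ref{lem:EK-translations} and Proposition~\ref{prop:EK-translations}: I will reduce the statement to a covering estimate for a set of ``bad'' angles, prove this estimate by a variant of the Erd\H{o}s--Kahane counting, and then deduce $\hdim E=0$ through the familiar $\bigcap_\ell\limsup_N$ construction. Identify $\R^2$ with $\C$, so that $Tx=\lam x$ with $\lam=r e^{2\pi i\alpha}$, and write $e_\beta$ for the unit vector at angle $\beta$. Since projecting and then taking a one-dimensional Fourier transform is the same as restricting $\wh\mu$ to the line through the origin in direction $e_\beta$, we have $\widehat{P_\beta\mu}(t)=\wh\mu(t e_\beta)=\prod_{n\ge 0}\Phi_n(t e_\beta)$ by \eqref{eq:FT-ssm}. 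Fix one pair $a_{j_0}\ne a_{j_1}$ (possible since $a$ is not constant) and put $w=a_{j_1}-a_{j_0}$. Keeping only this pair in the elementary bound $|\sum_j p_j e^{i\theta_j}|^2\le 1-2p_{j_0}p_{j_1}(1-\cos(\theta_{j_1}-\theta_{j_0}))$ gives $|\Phi_n(t e_\beta)|\le 1-\kappa(p)$ whenever $\|X_n\|\ge c$, where
\[
X_n:=\tfrac12\la \lam^n w,\,t e_\beta\ra=\operatorname{Re}(Z_n),\qquad Z_n:=\tfrac12\lam^n w\,t e^{-i\beta}=\lam^n Z_0,
\]
and $\kappa(p)=\Omega(p_{j_0}p_{j_1}c^2)$. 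Crucially $\|X_n\|$ does not involve $p$, so the exceptional set will be independent of $p$, as the statement requires.

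The key observation is that $Z_{n+1}=\lam Z_n$, so $X_n=\operatorname{Re}(Z_n)=\tfrac12(Z_n+\ov Z_n)$ satisfies the fixed second-order linear recurrence $X_{n+2}=\tau X_{n+1}-r^2 X_n$, where $\tau=\lam+\ov\lam=2r\cos 2\pi\alpha$ and $r^2=\lam\ov\lam$ are the elementary symmetric functions of the roots $\lam,\ov\lam$. Given $N$ and $\delta$, consider the set of $\beta$ for which, for some $t$ with $|t|\asymp r^{-N}$ (write $t=s r^{-N}$, $s\in[1,1/r)$), one has $\|X_n\|<c$ for at least $(1-\delta)N$ indices $n\le N$; call such indices \emph{aligned}. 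For every other $\beta$ at least $\delta N$ factors are $\le 1-\kappa(p)$, whence $|\widehat{P_\beta\mu}(t)|\le(1-\kappa)^{\delta N}=O(|t|^{-\sigma})$, and letting $t\to\infty$ gives $P_\beta\mu\in\cD_1$ for all $p$.

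To cover the bad set, set $K_n=\operatorname{round}(X_n)$ and $\e_n=X_n-K_n$; the recurrence gives $K_{n+2}=\tau K_{n+1}-r^2 K_n+O(\max_i|\e_i|)$, so if three consecutive indices are aligned (i.e.\ $|\e_i|<c$ with $c$ small) then $(K_n,K_{n+1})$ forces $K_{n+2}$ uniquely, because the true $X_{n+2}$ lies within $c$ of both $K_{n+2}$ and $\tau K_{n+1}-r^2 K_n$; symmetrically one may propagate backward since $r^2\ne 0$. The terms of smallest modulus are $(K_{N-1},K_N)$, which are bounded (as $|Z_{N-1}|,|Z_N|=\Theta(s)=O(1)$), giving $O(1)$ seeds; propagating backward, all but $O(\delta N)$ steps are forced, so allowing a factor $O(1)$ at the remaining steps and $\binom{N}{\delta N}=\exp(O(\delta N))$ choices of the aligned set yields only $\exp(O(\delta N))$ admissible sequences $(K_n)_{n\le N}$. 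Finally I turn a sequence into a short arc of angles: since $|Z_0|=\tfrac12|w|\,t\asymp r^{-N}$ is exponentially large while $\operatorname{Re}(Z_0)=K_0+O(1)$ and $\operatorname{Re}(\lam Z_0)=K_1+O(1)$, solving this $2\times2$ real system---invertible precisely because $\sin 2\pi\alpha\ne 0$, i.e.\ because $\lam\notin\R$---recovers $Z_0$ up to additive error $O(1)$, hence pins $\arg Z_0=\arg w-\beta$, and so $\beta$, to an interval of length $O(r^N)$ determined by $(K_0,K_1)$ alone.

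Thus the bad set is covered by $\exp(O(\delta N))$ intervals of length $O(r^N)$. Taking $\delta=1/\ell$ and forming $E=\bigcap_\ell\limsup_N E_{\ell,N}$, the same $\cH^s$-series estimate as in Proposition~\ref{prop:EK-translations} gives $\hdim(\limsup_N E_{\ell,N})=O(1/\ell)$ and hence $\hdim E=0$, while every $\beta\notin E$ enjoys the decay estimate at all large scales. I expect the main technical point to be securing \emph{uniqueness} (not merely $O(1)$ multiplicity) of $K_{n+2}$ at aligned triples---without it the count degrades from $\exp(O(\delta N))$ to $\exp(O(N))$ and the argument collapses---together with checking that the $O(1)$-precision reconstruction of $Z_0$ localizes $\beta$ to scale $r^N$ uniformly over the auxiliary parameter $s$; the nondegeneracy hypothesis $\lam\notin\R$ enters exactly at the inversion step and is what makes the reconstruction possible.
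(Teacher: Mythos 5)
Your proposal is correct and follows essentially the same route as the paper's proof: an Erd\H{o}s--Kahane count of the integer parts of $t\,\mathrm{Re}(\lam^n w e^{-i\beta})$ via a second-order linear recurrence with coefficients determined by $\lam$, yielding $\exp(O(\delta N))$ admissible sequences each of which pins $\beta$ to an arc of length $O(r^N)$, followed by the standard $\bigcap_\ell\limsup_N$ construction. The only differences are presentational --- you index so the quantities shrink and propagate the recurrence backward from the $O(1)$ seeds $(K_{N-1},K_N)$, and you recover $\beta$ by inverting a $2\times 2$ system (nondegenerate exactly because $\sin 2\pi\alpha\neq 0$) rather than via the paper's arctangent of $K_{n+1}/K_n$; both versions use $\lam\notin\R$ at the same point and both give the required uniqueness of the forced step at aligned triples.
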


As in Proposition \ref{prop:EK-translations}, we first establish a combinatorial lemma.
\begin{lemma}  \label{lem:EK-projections}
Let $\theta>1, \alpha\in (0,2\pi)$, $\alpha\ne \pi$, be fixed.  There is a constant $c_{\theta,\alpha}>0$ such that the set
\begin{equation} \label{eq:EK-projections-discrete-bad-set}
 \left\{ \beta\in [0,2\pi) : \max_{|t|\in [1,\theta]}\frac1N\left|\left\{ n\in [N] : \| t \theta^n \cos(\beta+n\alpha) \| \le c_{\theta,\alpha} \right\}\right| > 1- \delta\right\}
\end{equation}
can be covered by $\exp(O(\delta N))$ balls of radius $\theta^{-N}$.
\end{lemma}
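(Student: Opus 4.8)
The plan is to mirror the structure of Lemma~\ref{lem:EK-translations}, tracking the integer parts of the relevant sequence and showing that ``bad'' parameters $\beta$ force these integer parts to satisfy an approximate second-order recursion, so that each violation of the near-integer condition costs only a bounded branching factor. First I would fix $\theta>1$ and $\alpha\in(0,2\pi)\setminus\{\pi\}$, and for $\beta$ and $t$ with $|t|\in[1,\theta]$ introduce, for $n\in[N]$,
\begin{equation*}
t\theta^n\cos(\beta+n\alpha) = K_n + \eps_n, \qquad K_n\in\Z,\ \eps_n\in[-1/2,1/2).
\end{equation*}
The key algebraic input is the trigonometric identity
\begin{equation*}
\cos(\beta+(n+2)\alpha) + \cos(\beta+n\alpha) = 2\cos\alpha\,\cos(\beta+(n+1)\alpha),
\end{equation*}
which is the analogue of the quadratic relation $K_{n+2}\approx K_{n+1}^2/K_n$ used in Lemma~\ref{lem:EK-translations}. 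Multiplying through by $t\theta^{n+2}$ and substituting the definitions of $K_n,K_{n+1},K_{n+2}$ yields, after dividing by powers of $\theta$, a relation of the shape
\begin{equation*}
K_{n+2} + \theta^2 K_n - 2\theta\cos\alpha\, K_{n+1} = O\bigl(\theta^2\max(|\eps_n|,|\eps_{n+1}|,|\eps_{n+2}|)\bigr),
\end{equation*}
where the implicit constant depends only on $\theta$ and $\alpha$.

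\textbf{Branching estimate.} From this recursion I would extract the two combinatorial facts that drive the covering count, exactly as in the proof of Lemma~\ref{lem:EK-translations}. First, given $K_n$ and $K_{n+1}$ there are only $O(1)$ admissible values of $K_{n+2}$ (since the right-hand side is bounded uniformly), and there are $O(1)$ choices for the seed $(K_1,K_2)$. Second, there is a constant $c_{\theta,\alpha}>0$ such that if $|\eps_i|<c_{\theta,\alpha}$ for $i=n,n+1,n+2$, then the right-hand side is smaller than $1/2$ in absolute value, so $K_n,K_{n+1}$ \emph{uniquely} determine the integer $K_{n+2}$. Consequently, if $A\subset[N]$ is the set of indices where $\|t\theta^n\cos(\beta+n\alpha)\|\le c_{\theta,\alpha}$ and $|A|\ge(1-\delta)N$, then writing $\Lambda_j=(K_i)_{i\in[j]}$ and $\wt A=\{i\in[3,N]:i-2,i-1,i\in A\}$, one has $|\Lambda_{j+1}|=|\Lambda_j|$ for $j\in\wt A$ and $|\Lambda_{j+1}|=O(|\Lambda_j|)$ otherwise. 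Since $|\wt A|\ge(1-3\delta)N-3$, this gives $|\Lambda_N|\le O(1)^{3\delta N}=\exp(O(\delta N))$; summing over the $\exp(O(\delta N))$ choices of $A$ (via Stirling) keeps the total count at $\exp(O(\delta N))$.

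\textbf{From integer sequences to a covering.} Finally I would convert a knowledge of $K_N$ (equivalently $K_{N-1},K_N$) back into a small ball containing $\beta$. Because $t\theta^N\cos(\beta+N\alpha)=K_N+\eps_N$ with $|t|\in[1,\theta]$, fixing the integer $K_N$ (and enough neighboring data to pin down the phase $\beta+N\alpha$ modulo the sign ambiguity of $\cos$) localizes $\cos(\beta+N\alpha)$, and hence $\beta$, to an interval of length $O(\theta^{-N})$; this is the same mechanism by which $L_N/L_{N-1}$ localized $\theta$ in Lemma~\ref{lem:EK-translations}. Thus each admissible sequence of integer parts corresponds to a ball of radius $\theta^{-N}$, and the set in \eqref{eq:EK-projections-discrete-bad-set} is covered by $\exp(O(\delta N))$ such balls, as claimed.

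\textbf{Main obstacle.} The step I expect to be most delicate is the last one, recovering $\beta$ from the integer data, because $\cos$ is not injective and its derivative degenerates at the extrema; one must use that the phase $\beta+N\alpha$ advances deterministically (the rotation by $\alpha$ is known and fixed) to disentangle $\beta$ from the two-to-one nature of $\cos$ and to ensure the localization is genuinely of scale $\theta^{-N}$ rather than being lost at turning points. The hypothesis $\alpha\ne\pi$ is what guarantees $\cos\alpha\ne\pm1$ so that the recursion is genuinely second-order and non-degenerate, and keeping the constant $c_{\theta,\alpha}$ uniform over the compact range $|t|\in[1,\theta]$ while controlling the turning-point issue is the technical heart of the argument.
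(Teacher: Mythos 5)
Your proposal is correct and follows essentially the same route as the paper: the recursion you derive from $\cos(\beta+(n+2)\alpha)+\cos(\beta+n\alpha)=2\cos\alpha\,\cos(\beta+(n+1)\alpha)$ is exactly the paper's $K_{n+2}=C_1\theta^2K_n+C_2\theta K_{n+1}+O(\max(|\eps_n|,|\eps_{n+1}|,|\eps_{n+2}|))$ (obtained there from writing $\omega_{-2\alpha}=C_1\omega_0+C_2\omega_{-\alpha}$), and the branching count is identical to Lemma \ref{lem:EK-translations}. For the localization step you flag as the main obstacle, the paper observes that the ratio $(K_{n+1}+\eps_{n+1})/(K_n+\eps_n)=\theta\bigl(\cos\alpha-\sin\alpha\tan(\beta+n\alpha)\bigr)$ is independent of $t$, so that $\beta+n\alpha=f\bigl((K_{n+1}+\eps_{n+1})/(K_n+\eps_n)\bigr)$ with $f$ an explicit $\arctan$ expression of bounded derivative, and the turning-point/degeneracy issue is resolved by noting that $\alpha\ne 0,\pi$ forces $\max(K_{N-1},K_N)=\Omega(\theta^N)$, so at least one of the indices $n=N-1,N$ yields a ball of radius $O(\theta^{-N})$ around $\beta$.
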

\begin{proof}
The constants implicit in the $O$-notation are allowed to depend on $\theta$ and $\alpha$.
All the calculations involving angles below are understood to be modulo $2\pi$. We write
\[
t \theta^n \cos(\beta+n\alpha) = K_n+\e_n,  \quad K_n\in\Z,\e_n\in [-1/2,1/2).
\]
For any $\gamma\in\R$, write $\omega_\gamma=(\cos\gamma,\sin\gamma)$, and notice that
\begin{equation} \label{eq:EK-projections-key-relation}
t\theta^{n+j} \la\omega_{\beta+n\alpha},\omega_{-j\alpha}\ra = K_{n+j}+\e_{n+j}.
\end{equation}
From this it follows that
\[
\cos(\alpha)-\sin(\alpha)\tan(\beta+n\alpha) = \frac{\la\omega_{\beta+n\alpha},\omega_{-\alpha}\ra}{\la\omega_{\beta+n\alpha},\omega_{0}\ra}= \frac{K_{n+1}+\e_{n+1}}{\theta (K_n+\e_n)}\,,
\]
whence
\[
\beta+n\alpha = \arctan\left(\cot(\alpha) - \frac{K_{n+1}+\e_{n+1}}{\theta (K_n+\e_n)\sin(\alpha)}\right)=: f\left(\frac{K_{n+1}+\e_{n+1}}{K_{n}+\e_{n}}\right).
\]
Here the function $f$ is $C^1$ with bounded derivative.

Since $\alpha\neq 0,\pi$, it follows from \eqref{eq:EK-projections-key-relation} that $\max(K_{N-1},K_N)=\Omega(\theta^N)$. Thus it follows from the above that
\begin{equation} \label{eq:EK-projections-covering-balls}
\beta \in B\left(-(N-j)\alpha+f(K_{N-j+1}/K_{N-j}),O(\theta^{-N})\right)\quad\text{for some }j\in\{0,1\}.
\end{equation}

Let $C_1,C_2$ be real constants (depending on $\alpha$) such that $\omega_{-2\alpha}=C_1 \omega_0 + C_2\omega_{-\alpha}$. Then,  using \eqref{eq:EK-projections-key-relation} again,
\[
K_{n+2}+\e_{n+2} = C_1 \theta^2 (K_n+\e_n) + C_2 \theta (K_{n+1}+\e_{n+1}),
\]
whence
\[
K_{n+2} = C_1 \theta^2 K_n + C_2\theta K_{n+1} + O(\max|\e_n|,|\e_{n+1}|,|\e_{n+2}|).
\]
Following the scheme of the proof of Lemma \ref{lem:EK-translations}, we obtain that there exists $c_{\theta,\alpha}>0$ such that there are $\exp(O(\delta N))$ possible sequences $(K_n)_{n\in [N]}$ satisfying $|\e_n|<c_{\theta,\alpha}$ for at least $(1-\delta)N$ values of $n\in [N]$. Together with \eqref{eq:EK-projections-covering-balls} this yields the claim.
\end{proof}

\begin{proof}[Proof of Proposition \ref{prop:EK-projections}]
Fix $\lambda, a, p$ as in the statement of the proposition, and write $\mu=\mu(\lambda,a,p)$. Let $\lambda=\theta^{-1}\omega_\alpha$, where $\theta>1$ and $\omega_\alpha=(\cos\alpha,\sin\alpha)$. After applying a similarity to $\mu$, which does not affect the statement (it does rotate the exceptional set of $\beta$), we may and do assume that $a_1=(0,0)$ and $a_2=(1,0)$. Let
\[
E = \bigcap_{\ell\in \N}\limsup_{N\to\infty} E_{\ell,N},
\]
where
\[
E_{\ell,N}= \left\{ \beta: \max_{|t|\in [1,\theta]}\frac1N\left|\left\{ n\in [N] : \| t \theta^n \cos(\beta-(N-n)\alpha) \| \le c_{\theta,\alpha} \right\}\right| > 1- \frac{1}{\ell}\right\},
\]
and $c_{\theta,\alpha}>0$ is the constant from Lemma \ref{lem:EK-projections}. As in the proof of Proposition \ref{prop:EK-translations} above, it follows from Lemma \ref{lem:EK-projections} that $\hdim E=0$. Thus the task is to show that if $\beta\in [0,2\pi)\setminus E$, then $P_\beta\mu\in\cD_1$. Assume then that $\beta\notin E$; this means that there are $\ell,N_0\in\N$ such that $\beta\notin E_{\ell,N}$ for all $N\ge N_0$.

Fix a frequency $\xi= t\theta^N$ where $|t|\in [1,\theta]$. Denote by $R_\alpha$ the rotation through the angle $\alpha$, so that
$\lam a_j = R_\alpha \theta^{-1} a_j$. Then, since the Fourier transform of a projection is the restriction of the Fourier transform to the line with the corresponding slope, (\ref{eq:FT-ssm}) implies:
\begin{align*}
\wh{P_\beta\mu}(\xi)&=\wh{\mu}(\xi\omega_\beta) \\
&\le \prod_{n=0}^{N-1} \left| \sum_{j\in [m]} p_j\,\exp(i \pi  \la \theta^{-n} R_{n\alpha} a_j , t\theta^N \omega_\beta\ra ) \right|\\
&= \prod_{n=0}^{N-1} \left| \sum_{j\in [m]} p_j\,\exp(i \pi t \theta^{N-n} \la a_j  ,  \omega_{\beta-n\alpha}\ra )\right|.
\end{align*}
Since $\la a_1,\omega_\gamma\ra =0$ and $\la a_2,\omega_\gamma\ra=\cos\gamma$, there is $\rho=\rho(\lam,a,p)>0$ such that
\[
 \left| \sum_{j\in [m]} p_j\,\exp\left(i \pi t \theta^{N-n} \la a_j  ,  \omega_{\beta-n\alpha}\ra \right)\right| < 1-\rho
\]
whenever $\|t\theta^{N-n}\cos(\beta-n\alpha)\|>c_{\theta,\alpha}$. The desired conclusion then follows from the definition of $E_{\ell,N}$.
\end{proof}

\subsection{Convolutions of self-similar measures}

We conclude with yet another variant of Erd\H{o}s-Kahane, this time suited to study convolutions of self-similar measures; it will be a key part of the proof of Theorem \ref{mainthm:convolutions}.

\begin{prop} \label{prop:EK-convolutions}
Given $\lambda_1,\lambda_2\in (0,1)$, there exists a set $E\subset\R$ of zero Hausdorff dimension, such that the following holds: if $u\in\R\setminus E$, then for any $m_1, m_2\ge 2$, any $a_i\in\R^{m_i}$, and any $p_i\in\PP_{m_i}$, $i=1,2$,
\[
\mu(\lambda_1,a_1,p_1) * T_u \mu(\lambda_2,a_2,p_2)\in \cD_1,
\]
where $T_u(x)=ux$.
\end{prop}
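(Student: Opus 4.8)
The plan is to work on the Fourier side and run a two-base variant of the Erd\H{o}s--Kahane argument, in the spirit of Propositions~\ref{prop:EK-translations} and~\ref{prop:EK-projections}. Write $\mu_i=\mu(\lambda_i,a_i,p_i)$ and $\theta_i=\lambda_i^{-1}>1$. Since $\widehat{T_u\mu_2}(\xi)=\widehat{\mu_2}(u\xi)$, the convolution has Fourier transform $\widehat{\mu_1}(\xi)\,\widehat{\mu_2}(u\xi)$, and by \eqref{eq:FT-ssm} each factor is an infinite product whose $n$-th term is $\sum_{j}p_{ij}\exp(i\pi\lambda_i^n a_{ij}\zeta)$, with $\zeta=\xi$ for $i=1$ and $\zeta=u\xi$ for $i=2$. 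As $m_i\ge 2$, I fix indices with $d_i:=a_{ij_i}-a_{ik_i}\ne 0$; comparing just these two terms produces a constant $\rho_i>0$ (depending on $p_i$, which is harmless) such that the $n$-th factor of $\widehat{\mu_i}$ has modulus at most $1-\rho_i$ unless $\|\lambda_i^n\zeta d_i/2\|$ is small — call this event \emph{resonance of factor $i$ at scale $n$}. It therefore suffices to exhibit a zero-dimensional set of $u$ off which, for every large $\xi$, at least one of the two factors is non-resonant at a positive proportion of scales: a definite proportion of its terms then drops by a fixed amount, so $|\widehat{\mu_1}(\xi)\widehat{\mu_2}(u\xi)|\le \exp(-\Omega(\log|\xi|))=O(|\xi|^{-\sigma})$ for some $\sigma>0$, giving $\mu_1*T_u\mu_2\in\cD_1$.

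The combinatorial core is a lemma asserting that, for $u$ restricted to a fixed compact subset of $\R\setminus\{0\}$, the set of $u$ for which \emph{some} large frequency is resonant for both factors at a proportion $>1-\delta$ of scales can be covered, at scale $N$, by $\exp(O(\delta N))$ balls of radius $O(\theta_1^{-N})$. Following Lemma~\ref{lem:EK-translations}, I expand the two resonance quantities as $\lambda_1^{\,N_1-n}\xi d_1/2=K_n+\e_n$ and $\lambda_2^{\,N_2-n}u\xi d_2/2=L_n+\delta_n$, with $K_n,L_n\in\Z$, $\e_n,\delta_n\in[-1/2,1/2)$, where $N_1\approx\log_{\theta_1}|\xi|$, $N_2\approx\log_{\theta_2}|u\xi|$, and crucially $N_2=\Theta(N_1)$ because $u$ lies in a compact range. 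Each sequence obeys the Erd\H{o}s--Kahane recurrence $K_{n+2}\approx K_{n+1}^2/K_n$ (and similarly for $L$), so on resonant positions $(K_n,K_{n+1})$ determines $K_{n+2}$; allowing $\delta N$ non-resonant positions in each leaves $\exp(O(\delta N))$ admissible sequences of each type. Since the top terms satisfy $K_{N_1}\approx\xi d_1/2$ and $L_{N_2}\approx u\xi d_2/2$, one recovers $u=(u\xi)/\xi\approx (d_1/d_2)\,L_{N_2}/K_{N_1}$ to precision $O(1/K_{N_1})=O(\theta_1^{-N_1})$, and multiplying the two sequence counts gives the claimed covering.

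With the lemma established I would conclude exactly as in the proof of Proposition~\ref{prop:EK-translations}. Exhausting $\R\setminus\{0\}$ by the compact annuli $\{1/M\le|u|\le M\}$, set $E=\{0\}\cup\bigcup_M\bigcap_\ell\limsup_N E_{M,\ell,N}$, where $E_{M,\ell,N}$ is the bad set of the lemma at scale $N$ with $\delta=1/\ell$. Estimating $\cH^s$ over the covering balls gives $\cH^s(\bigcup_{N\ge N_0}E_{M,\ell,N})\le\sum_{N\ge N_0}\exp(O(N/\ell))\,\theta_1^{-sN}\le\exp(-\Omega(N_0))$ once $s>O_M(1/\ell)$, so $\hdim(\limsup_N E_{M,\ell,N})=O_M(1/\ell)$ and hence $\hdim E=0$. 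If $u\notin E$, then there are $\ell,N_0$ such that for all $N\ge N_0$ and all frequencies of modulus $\approx\theta_1^N$ the two factors are \emph{not} both resonant at more than $(1-1/\ell)N$ scales; thus one factor is non-resonant at $\ge N/\ell$ scales, yielding the power decay of the first paragraph and $\mu_1*T_u\mu_2\in\cD_1$.

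The step I expect to be most delicate is handling two \emph{incommensurate} contraction ratios $\theta_1\ne\theta_2$. In Lemma~\ref{lem:EK-translations} a single base drives both integer sequences, so their ratio is automatically forced to be essentially constant; here the two Erd\H{o}s--Kahane recurrences run at unrelated rates and are linked only through the shared frequency $\xi$. Making the two-step pinning quantitatively tight — first determining $\xi$ from the base-$\theta_1$ sequence, then $u$ from the base-$\theta_2$ sequence evaluated at that $\xi$ — and verifying that $N_2=\Theta(N_1)$ so that the two exceptional-position counts combine to $\exp(O(\delta N))$ rather than anything larger, is the crux of the argument.
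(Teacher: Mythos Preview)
Your overall strategy is the same as the paper's, and the proposal would go through, but your implementation is needlessly heavy in two places and the paper's choices dissolve precisely the difficulty you flag at the end.

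First, since here $\theta_1,\theta_2$ are \emph{fixed} (only $u$ is unknown), you do not need the two--step ``eliminate the base'' recurrence $K_{n+2}\approx K_{n+1}^2/K_n$ borrowed from Lemma~\ref{lem:EK-translations}; the one--step linear relation $K_{n+1}-\theta_1 K_n=O(\max(|\e_n|,|\e_{n+1}|))$ (and likewise for $L$) already gives $O(1)$ successors in general and a unique successor on resonant steps. The paper uses this simpler recurrence.

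Second, instead of running two independent sequences of lengths $N_1$ and $N_2\approx (\log\theta_1/\log\theta_2)N_1$ and then ``pinning twice'', the paper synchronizes the bases by a time change: with $k(n)$ the integer satisfying $\theta_2^{k(n)}\asymp\theta_1^n$, one sets
\[
t\theta_1^n=K_n+\e_n,\qquad t u\,\theta_2^{k(n)}=L_n+\delta_n,
\]
so that both sequences are indexed by the \emph{same} $n\in[N]$; the ratio $r_n=\theta_1^n/\theta_2^{k(n)}$ stays bounded, and $u\in B(r_N L_N/K_N,O(\theta_1^{-N}))$ directly. The $L$--sequence only changes when $k(n+1)=k(n)+1$, so the counting is again $\exp(O(\delta N))$. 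This device is exactly what handles the ``incommensurate $\theta_1\ne\theta_2$'' issue you single out as delicate: there is no need to match two unrelated recurrences post hoc, because the synchronization is built in.

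One further point worth tightening: as you set it up, the combinatorial ``bad set'' and hence your $E_{M,\ell,N}$ depend on $d_1,d_2$, so the resulting $E$ would a priori depend on the translations $a_i$, whereas the proposition asks for a single $E=E(\lambda_1,\lambda_2)$. The paper avoids this by writing the lemma purely in terms of $\|t\theta_1^n\|$ and $\|t u\theta_2^{k(n)}\|$, so that $E$ is defined without reference to $a_i$; the dependence on the specific translations is then absorbed in the final step by the affine invariance of membership in $\cD_1$.
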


As before, this depends on the following combinatorial statement.
\begin{lemma} \label{lem:EK-convolutions}
Fix $\theta_2>\theta_1>1$, and write $k(n)=\min\{ k: \theta_2^k <\theta_1^n\}$. Also let $H=[-M,-1/M]\cup [1/M,M]$. Then there exists a constant $c_H>0$ such that
\begin{equation} \label{eq:EK-convolutions-discrete-bad-set}
 \left\{ u\in H : \max_{|t|\in [1,\theta_1]}\frac1N\left|\left\{ n\in [N] : \max(\| t \theta_1^n\|,\|tu\theta_2^{k(n)}\|) \le c_H \right\}\right| > 1- \delta\right\}
\end{equation}
can be covered by $\exp(O_H(\delta N))$ intervals of length $\theta_1^{-N}$.
\end{lemma}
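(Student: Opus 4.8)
The plan is to follow the scheme of Lemma \ref{lem:EK-translations}, the new feature being that two \emph{different} bases $\theta_1,\theta_2$ are in play, linked through $k(n)$. Throughout, constants implicit in $O(\cdot)$ are allowed to depend on $H$, and it is understood that $u\in H$ and $|t|\in[1,\theta_1]$. For $n\in[N]$ write
\[
t\theta_1^n = K_n+\e_n,\qquad tu\,\theta_2^{k(n)} = L_n+\zeta_n,
\]
with $K_n,L_n\in\Z$ and $\e_n,\zeta_n\in[-1/2,1/2)$. The first point to notice is that $tu\theta_2^{k(n)}$ depends on $n$ only through $k(n)$; since $\log\theta_1/\log\theta_2\in(0,1)$, the function $k$ is non-decreasing and increases by $0$ or $1$ at each step, so it is natural to reindex the second sequence by $\kappa$. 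Writing $tu\theta_2^\kappa = L^{(\kappa)}+\zeta^{(\kappa)}$ for $\kappa$ in the range $\{k(1),\dots,k(N)\}$ (a set of cardinality $N'\le N$, in fact $N'=\Theta(N)$), we have $L_n=L^{(k(n))}$ and $\zeta_n=\zeta^{(k(n))}$, and each value of $\kappa$ is attained on a nonempty ``plateau'' of bounded length.

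Both reindexed sequences obey the same quadratic recursion as in Lemma \ref{lem:EK-translations}. Indeed, from $t\theta_1^{n+2}=(t\theta_1^{n+1})^2/(t\theta_1^n)$ one gets $K_{n+2}=K_{n+1}^2/K_n+O(\max(|\e_n|,|\e_{n+1}|,|\e_{n+2}|))$, and from $tu\theta_2^{\kappa+2}=(tu\theta_2^{\kappa+1})^2/(tu\theta_2^\kappa)$ one gets $L^{(\kappa+2)}=(L^{(\kappa+1)})^2/L^{(\kappa)}+O(\max(|\zeta^{(\kappa)}|,|\zeta^{(\kappa+1)}|,|\zeta^{(\kappa+2)}|))$. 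Hence there is a constant $c_H>0$ such that two consecutive terms of $(K_n)$ together with three consecutive errors all $<c_H$ determine the next term, and likewise for $(L^{(\kappa)})$; when some error exceeds $c_H$ there are only $O(1)$ choices. There are also $O(1)$ choices for the two initial terms of each sequence, since $|t|\le\theta_1$, $|u|\le M$ and $\theta_2^{k(1)+1}=O(1)$.

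Next I would count bad steps. If $u$ lies in the set \eqref{eq:EK-convolutions-discrete-bad-set}, fix a witnessing $t$; then the good set $G=\{n:\max(\|t\theta_1^n\|,\|tu\theta_2^{k(n)}\|)\le c_H\}$ has $|G|\ge(1-\delta)N$, so at most $\delta N$ indices $n$ have $|\e_n|>c_H$ and at most $\delta N$ have $|\zeta^{(k(n))}|>c_H$. A value $\kappa$ is \emph{bad} (that is, $|\zeta^{(\kappa)}|>c_H$) only if every $n$ in its plateau is bad for the second condition; as the plateaus are nonempty and disjoint, the number of bad $\kappa$ is at most $\delta N$, a fraction $O(\delta)$ of the $N'=\Theta(N)$ values of $\kappa$. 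Exactly as in Lemma \ref{lem:EK-translations} (summing over the possible location of the bad indices via Stirling's formula, which contributes a further $\exp(O(\delta N))$), the recursions then yield $\exp(O(\delta N))$ admissible sequences $(K_n)_{n\in[N]}$ and, separately, $\exp(O(\delta N))$ admissible sequences $(L^{(\kappa)})_\kappa$.

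Finally, combining the two counts gives $\exp(O(\delta N))$ admissible pairs, and in particular $\exp(O(\delta N))$ possibilities for the terminal pair $(K_N,L_N)$. Since $|t|\ge1$ and $|u|\ge1/M$, we have $|K_N|,|L_N|=\Omega(\theta_1^N)$, and dividing the two defining relations yields
\[
u=\frac{\theta_1^N}{\theta_2^{k(N)}}\cdot\frac{L_N+\zeta_N}{K_N+\e_N},
\]
where $\theta_1^N/\theta_2^{k(N)}=\Theta(1)$ is a fixed quantity. As $\e_N,\zeta_N$ range over $[-1/2,1/2)$ this pins $u$ down to an interval of length $O(1/|K_N|)=O(\theta_1^{-N})$; the possible signs of $t$ and $u$ only affect the signs of $K_N,L_N$ and cause no trouble. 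Hence the set \eqref{eq:EK-convolutions-discrete-bad-set} is covered by $\exp(O(\delta N))$ intervals of length $O(\theta_1^{-N})$, which may be replaced by intervals of length $\theta_1^{-N}$ at the cost of a bounded factor, completing the proof. The one genuinely new difficulty compared with Lemma \ref{lem:EK-translations} is the two-base bookkeeping: one must check that the second condition is constant along the plateaus, so that reindexing by $\kappa$ restores a single-base geometric recursion while keeping the proportion of bad indices at $O(\delta)$.
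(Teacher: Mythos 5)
Your proof is correct and follows essentially the same route as the paper's: the same $(K_n,L_n)$ bookkeeping, the same observation that the second sequence only changes when $k(n)$ increments (so the bad proportion among the $\Theta(N)$ values of $\kappa$ stays $O(\delta)$), and the same final localization of $u$ via $\theta_1^N\theta_2^{-k(N)}L_N/K_N$ to an interval of length $O(\theta_1^{-N})$. The only cosmetic difference is that you carry over the three-term quadratic recursion $K_{n+2}\approx K_{n+1}^2/K_n$ from Lemma \ref{lem:EK-translations}, whereas the paper exploits that $\theta_1,\theta_2$ are fixed here (only $u$ is a parameter) to use the simpler two-term recursion $K_{n+1}\approx\theta_1 K_n$; both give the same $\exp(O_H(\delta N))$ count.
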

\begin{proof}
All constants implicit in the $O$ notation may depend on $H$. For $n\in [N]$, write
\begin{align*}
t\theta_1^n &= K_n + \e_n,\\
t u\theta_2^{k(n)} &= L_n + \delta_n,
\end{align*}
where $K_n,L_n$ are integers and $\e_n,\delta_n$ are in $[-1/2,1/2)$; these numbers depend on $t, u$. Write $r_n=\theta_1^n/\theta_2^{k(n)}$; note that $r_n=\Theta(1)$, and therefore
\begin{equation} \label{eq:EK-convolutions-covering-balls}
u \in B(r_n L_n/K_n, O(1/K_n)) \subset B\left(r_n L_n/K_n, O(\theta_1^{-n})\right).
\end{equation}
We estimate the number of possible sequences $(K_i,L_i)_{i\in [N]}$. We bound this crudely by the product of the number of sequences $(K_i)_{i\in [N]}$ and the number of sequences $(L_i)_{i\in N}$. For $K_i$, note that
\[
K_{n+1}-\theta_1 K_n = O(\max(|\e_n|,|\e_{n+1}|)),
\]
and therefore
\begin{enumerate}[(i)]
\item Given $K_n$, there are at most $O(1)$ possible values for $K_{n+1}$, uniformly in $t,u$. (There are also $O(1)$ possible values for $K_1$.)
\item There is a constant $c_H>0$, such that if $\max(|\e_n|,|\e_{n+1}|) < c_H$, then $K_n$ uniquely determine $K_{n+1}$, again independently of $t,u$.
\end{enumerate}
An argument analogous to that in the proof of Lemma \ref{lem:EK-translations} shows that there are $\exp(O(\delta N))$ possible values for $(K_n)_{n\in [N]}$.

Regarding the sequence $L_n$, note that $L_{n+1}+\delta_{n+1} \neq L_n+\delta_n$ only when $k(n+1)=k(n)+1$. Hence, the same analysis restricted to the set $\{ n\in N: k(n+1)=k(n)+1\}$ (which obviously has fewer than $N$ elements) yields also that  there are $\exp(O(\delta N))$ values for $L_N$ (here we use that $u$ lies in a compact set).  Thus there are $\exp(O(\delta N))$ values for $r_N L_N/K_N$, and in light of \eqref{eq:EK-convolutions-covering-balls} this completes the proof.

\end{proof}

\begin{proof}[Proof of Proposition \ref{prop:EK-convolutions}]
Let $H_M = [-M,-1/M]\cup [1/M,M]$. Let $c_M=c_{H_M}$ be the constant given by Lemma \ref{lem:EK-convolutions} for this set, and denote the set given in \eqref{eq:EK-convolutions-discrete-bad-set} with  $\theta_i=\lam_i^{-1}$ and $\delta=1/\ell$ by $F_{M,\ell,N}$. Further, let $E_{M,\ell,N}=\{ u/r_N:u\in F_{M,\ell,N}\}$, where $r_N=\lambda_1^{-N}\lambda_2^{k(N)}$ and $k(N)$ is the smallest integer $k$ such that  $\lambda_1^{-N}\lambda_2^k<1$. Define
\[
 E = \{0\} \cup \bigcup_{M>0} \bigcap_{\ell\in \N} \limsup_N E_{M,\ell,N}.
\]
An analysis nearly identical to that in the proof of Proposition \ref{prop:EK-translations} shows that $\hdim E=0$.

Suppose $u\in\R\setminus E$. Then $u\in H_M$ for some $M$, and there are $\ell,N_0\in\N$ such that, for all $N\ge N_0$,
\[
 \min_{|t|\in [1,\lam_1^{-1}]}\frac1N\left|\left\{ n\in [N] : \max(\| t \lam_1^{-n}\|,\|tu r_N\lam_2^{-k(n)}\|) > c_M \right\}\right| \ge \frac{1}{\ell}.
\]
Thus, one of the following two alternatives occur for each $t$ with $|t|\in [1,\lam_1^{-1}]$ and $N\ge N_0$:
\begin{align*}
\text{(i) } & \frac1N\left|\left\{ n\in [N] : \| t \lam_1^{-n}\| > c_M \right\}\right| \ge \frac{1}{2\ell},\\
\text{(ii) } & \frac1N\left|\left\{ n\in [N] : \| tu r_N\lam_2^{-k(n)}\| > c_M \right\}\right| \ge \frac{1}{2\ell}.
\end{align*}

Pick a frequency $\xi = t\lambda_1^{-N}$ where $|t|\in [1,\lambda_1^{-1}]$. Then also $\xi = t r_N \lambda_2^{-k(N)}$. Thus, denoting $\nu=\mu_1*T_u\mu_2$, where $\mu_1=\mu(\lambda_1,a_1,p_1)$ and $\mu_2= \mu(\lambda_2,a_2,p_2)$ we have, using the convolution formula,
\[
\wh{\nu}(\xi) = \wh{\mu}_1(t \lam_1^{-N}) \wh{\mu}_2(t u r_N  \lam_2^{-k(N)}).
\]
If the alternative (i) above holds, then a standard argument (similar to that of Proposition \ref{prop:EK-translations}) shows that
\[
|\wh{\mu}_1(t \lam_1^{-N})| = O(|\xi|^{-\gamma})
\]
for some $\gamma>0$ independent of $t$ and $N$ and hence of $\xi$. Otherwise, if alternative (ii) holds then, using the fact that $k(n)$ stays constant for at most $O(1)$ values of $n$ before increasing by $1$, we analogously get that
\[
\left|\wh{\mu}_2(t u r_N  \lam_2^{-k(N)})\right| = O(|\xi|^{-\gamma}),
\]
for some $\gamma>0$. Since the Fourier transform of a probability measure is bounded by $1$, in either case we obtain $|\wh{\nu}(\xi)|=O(|\xi|^{-\gamma})$, as desired.
\end{proof}

\section{A criterion for the absolute continuity of a convolution}
\label{sec:abs-cont-of-convolution}

In this section we establish a new criterion to guarantee that the convolution of two measures is absolutely continuous with an $L^q$ density. Before stating it, we review several concepts of dimension of a measure.

For $q>0, q\neq 1$, the (lower) $L^q$ dimension of a measure $\mu\in\cP_d$ of compact support is defined as
\[
D_q(\mu) = \liminf_{n\to\infty} \frac{\log S_{n,q}\mu}{(1-q)n},
\]
where
\[
S_{n,q}\mu = \sum_{I\in\mathcal{D}_n} \mu(I)^q,
\]
with $\mathcal{D}_n$ the partition into dyadic cubes of side length $2^{-n}$. Recall that logarithms are to base $2$. There are several equivalent formulations of this definition; we will require the following one.

\begin{lemma} \label{lem:equivalence-Lq-dim}
For any $\mu\in\cP_d$ and $q>1$,
\[
D_q(\mu) = \liminf_{r\downarrow 0} \frac{\log\left(r^{-d}\int \mu(B(x,r))^q \, dx\right)}{(q-1)\log r}.
\]
Moreover, the $\liminf$ may be taken along any sequence $r_n\downarrow 0$ with $\log r_{n+1}/\log r_n\to 1$.
\end{lemma}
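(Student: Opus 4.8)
The plan is to reduce the lemma to a single two-sided comparison, valid up to multiplicative constants depending only on $d$ and $q$, between the continuous quantity $F(r):=r^{-d}\int\mu(B(x,r))^q\,dx$ and the dyadic sums $S_{n,q}\mu$ at the matching scale. First I would note that, since $q>1$ and $\mu$ is a probability measure, $F(r)$ is finite for every $r$ (indeed $\int\mu(B(x,r))^q\,dx\le\int\mu(B(x,r))\,dx=c_d\,r^d$ by Fubini), so there are no convergence issues. Writing $n(r):=\lceil\log_2(1/r)\rceil$, so that $2^{-n(r)}\le r<2^{-n(r)+1}$, the whole lemma will follow once I show that $F(r)=\Theta_{d,q}(S_{n(r),q}\mu)$ with constants independent of $r$.

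Second, I would establish this comparison by sandwiching balls between cubes. For the upper bound, a ball $B(x,r)$ with $2^{-n}\le r<2^{-n+1}$ meets at most $O_d(1)$ cubes of $\mathcal{D}_n$, so the power-mean inequality (using $q>1$) gives $\mu(B(x,r))^q\le O_d(1)\sum_{I\cap B(x,r)\neq\emptyset}\mu(I)^q$; integrating in $x$ and noting that each $I\in\mathcal{D}_n$ is charged only for $x$ in its $r$-neighborhood, of Lebesgue measure $O_d(r^d)$, yields $F(r)\le C_{d,q}\,S_{n,q}\mu$. For the lower bound I would use that every cube $I\in\mathcal{D}_{n'}$ containing $x$ satisfies $I\subseteq B(x,\diam I)=B(x,2^{-n'}\sqrt d)$, so choosing $n'=n(r)+O_d(1)$ with $2^{-n'}\sqrt d\le r$ gives $\mu(I(x))\le\mu(B(x,r))$; integrating, $2^{-n'd}S_{n',q}\mu\le r^d F(r)$, i.e. $F(r)\ge C_{d,q}\,S_{n',q}\mu$. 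Both estimates involve $S_{n,q}\mu$ at scales within $O_d(1)$ of $n(r)$, so I need the near-scale stability $2^{-d(q-1)}S_{n,q}\mu\le S_{n+1,q}\mu\le S_{n,q}\mu$ (the right inequality because $\sum_{J\subset I}\mu(J)^q\le(\sum_{J\subset I}\mu(J))^q=\mu(I)^q$, the left from the power-mean inequality applied to the $2^d$ children of each cube) to replace $S_{n',q}\mu$ by $S_{n(r),q}\mu$ up to a constant. This gives $F(r)=\Theta_{d,q}(S_{n(r),q}\mu)$.

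Third, taking logarithms yields $\log F(r)=\log S_{n(r),q}\mu+O_{d,q}(1)$, while $\log r=-n(r)+O(1)$; dividing by $(q-1)\log r$ and letting $r\downarrow0$ (so $n(r)\to\infty$ and the additive $O(1)$ errors are swamped by the denominator) identifies $\liminf_{r\downarrow0}\tfrac{\log F(r)}{(q-1)\log r}$ with $\liminf_{n\to\infty}\tfrac{\log S_{n,q}\mu}{(1-q)n}=D_q(\mu)$, proving the first assertion. For the ``moreover'' part, set $a_n=\log S_{n,q}\mu$; the stability inequality gives $|a_{n+1}-a_n|\le d(q-1)$, and since $a_n=O_{d,q}(n)$ this forces the normalized sequence $b_n:=a_n/((1-q)n)$ to have increments $|b_{n+1}-b_n|=O_{d,q}(1/n)$. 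For any $r_k\downarrow0$ with $\log r_{k+1}/\log r_k\to1$, the scales $n_k:=n(r_k)$ satisfy $n_{k+1}/n_k\to1$, hence $\sum_{j=n_k}^{n_{k+1}-1}|b_{j+1}-b_j|=O(\log(n_{k+1}/n_k))\to0$; thus the values of $b_n$ over each block $[n_k,n_{k+1}]$ cluster around $b_{n_k}$, so the subsequence and the full sequence have the same liminf. Combined with $\tfrac{\log F(r_k)}{(q-1)\log r_k}=b_{n_k}+o(1)$ from the comparison in Step~2, this proves the claim.

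The main obstacle I expect is purely bookkeeping: controlling the $\sqrt d$-type mismatch between balls and cubes together with the resulting $O_d(1)$ shift in scale, and verifying that these shifts are absorbed by the near-scale stability of $S_{n,q}\mu$ so that every comparison constant is genuinely independent of $r$. The analytic content — the power-mean inequalities and the elementary fact that a sequence with $O(1/n)$ increments has the same liminf along subsequences with $n_{k+1}/n_k\to1$ — is routine.
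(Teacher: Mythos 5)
Your proposal is correct and follows essentially the same route as the paper's proof: both compare $\int \mu(B(x,r))^q\,dx$ with the dyadic sums $S_{n,q}\mu$ at a matching scale by sandwiching balls between $O_d(1)$ dyadic cubes, and both absorb the resulting $O_d(1)$ scale shift using the near-scale stability $S_{n+\ell,q}\mu=\Theta_{\ell}(S_{n,q}\mu)$ before interpolating via the logarithms. The only differences are cosmetic: you prove the stability inequality directly from the power-mean inequality where the paper cites \cite[Lemma 2.2]{PeresSolomyak00}, and you spell out the interpolation to general sequences $r_n$ with $\log r_{n+1}/\log r_n\to 1$, which the paper leaves implicit.
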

\begin{proof}
This is standard but we include the proof as we have not been able to trace it in the literature. Thanks to the logarithms, it is enough to prove it for the sequence $r_n=2^{-n}$ and interpolate. Let $\ell$ be such that $2^\ell > \sqrt{d}$. Writing $I_{n,x}$ for the dyadic cube of side length $2^{-n}$ that contains $x$, we have
\[
S_{n+\ell,q}\mu = 2^{(n+\ell)d}\int \mu(I_{n+\ell,x})^q \,dx \le O_d(1) 2^{nd}\int \mu(B(x,2^{-n}))^q \,dx.
\]
On the other hand, $\mu(B(x,2^{-n}))\le \sum_{I\in \mathcal{D}_{n,x}} \mu(I)$, where $\mathcal{D}_{n,x}$ are the cubes in $\mathcal{D}_n$ which intersect $B(x,2^{-n})$. Since there are $O_d(1)$ such cubes, and each cube in $\mathcal{D}_n$ is in $\mathcal{D}_{n,x}$ for a set of $x$ of Lebesgue measure $\Theta(2^{-nd}$), we have
\begin{align*}
2^{nd}\int \mu(B(x,2^{-n}))^q \,dx &\le 2^{nd} \int \left(\sum_{I\in \mathcal{D}_{n,x}} \mu(I)\right)^q \,dx \\
&\le O_{d,q}(1) 2^{nd} \int \sum_{I\in \mathcal{D}_{n,x}} \mu(I)^q \,dx \\
&\le O_{d,q}(1) \sum_{I\in\mathcal{D}_n} \mu(I)^q = O_{d,q}(1) S_{n,q}\mu.
\end{align*}
But $S_{n+\ell,q}\mu = \Theta_\ell(S_{n,q}\mu)$ (see e.g. \cite[Lemma 2.2]{PeresSolomyak00}), hence combining both inequalities yields the lemma.
\end{proof}
We will also need the concept of \emph{Fourier} dimension. This is usually defined for sets, but here we need it for measures.
\begin{defn}
For any $\mu\in\cP_d$, its \emph{Fourier dimension} is
\[
\fdim\mu = \sup\{ \sigma\ge 0: \wh{\mu}(\xi) = O_{\mu,\sigma}(|\xi|^{-\sigma/2})\}.
\]
\end{defn}

The $1/2$ factor arises because of the connection between Fourier decay and Hausdorff dimension: for any $\mu\in\cP_d$ it holds that $\fdim\mu\le \hdim\mu$, see e.g. \cite[Section 12.17]{Mattila95}. The inequality may be (and often is) strict for self-similar measures. For example, any self-similar measure (indeed, \emph{any measure}) on the middle-thirds Cantor set has Fourier dimension zero. We recall that measures with $\fdim\mu=\hdim\mu$ are called \emph{Salem} measures, but we will not require this concept here.

The $L^2$ dimension plays a distinguished r\^{o}le due to its connection with energy and $L^2$ norms, and is also known as \emph{correlation} dimension. The following was proved in \cite[Lemma 2.1]{Shmerkin13}.

\begin{lemma} \label{lem:convolution-AC}
Let $\mu,\nu\in\cP_d$.
\begin{enumerate}[(i)]
 \item If $\hdim\mu=d$ and $\fdim\nu>0$, then $\mu*\nu\ll\cL_d$.
 \item If $D_2\mu=d$ and $\fdim\nu>0$, then $\mu*\nu$ has a density in $L^2$, and it also has fractional derivatives of some order in $L^2$.
\end{enumerate}
\end{lemma}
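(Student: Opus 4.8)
The plan is to work on the Fourier side, where convolution becomes multiplication: $\wh{\mu*\nu}=\wh{\mu}\cdot\wh{\nu}$. The natural strategy is to show that $\wh{\mu*\nu}$ lies in an appropriate $L^p$ space of frequencies, and then invoke standard Fourier-analytic criteria (Plancherel for part (ii), and a Sobolev/Hausdorff-Young type argument for part (i)) to deduce absolute continuity and the stated regularity. The hypothesis $\fdim\nu>0$ gives us a polynomial decay bound $|\wh{\nu}(\xi)|=O(|\xi|^{-\sigma/2})$ for some $\sigma>0$, which is exactly the gain we need to turn a borderline integrability statement about $\wh{\mu}$ into a genuine one after multiplying by $\wh{\nu}$.

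For part (ii), I would start from the fact that $D_2\mu=d$ translates, via the $L^2$ characterization of correlation dimension, into a control on the frequency-side energy: roughly, $\int_{|\xi|\le R}|\wh{\mu}(\xi)|^2\,d\xi$ grows slower than any positive power of $R$. Concretely, $D_2\mu=d$ is equivalent (via Lemma \ref{lem:equivalence-Lq-dim} with $q=2$ and Plancherel) to the statement that the dyadic-block energies $\int_{|\xi|\sim 2^j}|\wh{\mu}(\xi)|^2\,d\xi$ are $O(2^{j(d-d+\e)})=O(2^{j\e})$ for every $\e>0$. Multiplying by $|\wh{\nu}(\xi)|^2=O(|\xi|^{-\sigma})$ produces blockwise bounds $O(2^{j(\e-\sigma)})$, whose sum over $j$ converges once $\e<\sigma$. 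Hence $\int|\wh{\mu*\nu}(\xi)|^2\,d\xi<\infty$, and Plancherel gives $\mu*\nu\in L^2$. To get fractional derivatives in $L^2$, I would insert a weight $|\xi|^{2s}$ into the same computation: the blocks become $O(2^{j(2s+\e-\sigma)})$, still summable provided $2s<\sigma$, which shows $(1+|\xi|^2)^{s/2}\wh{\mu*\nu}\in L^2$, i.e. $\mu*\nu$ has fractional derivatives of order $s$ in $L^2$ for any $s<\sigma/2$.

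For part (i), the hypothesis is weaker (Hausdorff dimension $d$ rather than $L^2$ dimension $d$), so I would not expect a clean $L^2$ bound. Instead, the plan is to use $\hdim\mu=d$ to obtain, for every $\e>0$, a set $A$ with $\mu(A)>0$ on which $\mu$ has a large Frostman exponent $d-\e$, i.e. $\mu(B(x,r))=O(r^{d-\e})$; this gives a polynomial bound on the energy integral $\int\!\!\int|x-y|^{-(d-\e)}\,d\mu\,d\mu<\infty$ for the restricted measure, which in frequency translates to $\int|\wh{\mu}(\xi)|^2|\xi|^{-\e}\,d\xi<\infty$ (up to the usual truncation to the restricted piece). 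Pairing this $\e$-loss against the $\sigma$-gain from $\fdim\nu>0$ via Cauchy–Schwarz then forces $\wh{\mu*\nu}\in L^1$ (choosing $\e$ small relative to $\sigma$), whence $\mu*\nu$ has a continuous density by Fourier inversion, in particular is absolutely continuous.

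The main obstacle I anticipate is the passage from the \emph{dimension} hypotheses to quantitative \emph{frequency-side} estimates with the right uniformity. For part (ii) this is essentially a careful application of Lemma \ref{lem:equivalence-Lq-dim} and Plancherel and should be routine, but for part (i) the difficulty is that $\hdim\mu=d$ is only an infimum statement and does not by itself yield a global Frostman bound; one typically must restrict to a subset $A$ of positive measure, argue there, and then check that absolute continuity of $(\mu|_A)*\nu$ suffices to conclude absolute continuity of $\mu*\nu$. Handling this restriction cleanly — and confirming that the exceptional loss $\e$ can always be taken below the available decay exponent $\sigma$ — is where the real care is required; since this is precisely the content of \cite[Lemma 2.1]{Shmerkin13}, I would cite that argument and reproduce only the quantitative energy estimates needed to read off the fractional-derivative conclusion.
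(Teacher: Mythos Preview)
The paper does not prove this lemma; it simply cites \cite[Lemma 2.1]{Shmerkin13} and moves on to the sharper Theorem~\ref{thm:abs-cont-convolution}. Your sketch for part~(ii) is the standard Plancherel/energy argument and is correct, including the fractional-derivative upgrade via the weight $|\xi|^{2s}$.

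Your plan for part~(i), however, contains a concrete gap. You propose to obtain $\wh{(\mu|_A)*\nu}\in L^1$ by pairing the energy bound $\int |\wh{\mu|_A}(\xi)|^2|\xi|^{-\e}\,d\xi<\infty$ against the decay of $\wh{\nu}$ via Cauchy--Schwarz. But any such split that places $|\wh{\nu}|$ in an $L^2$ factor leaves you integrating something like $|\xi|^{\e-\sigma}$ over $\R^d$, which diverges unless $\sigma>d+\e$; the hypothesis $\fdim\nu>0$ only supplies an arbitrarily small $\sigma>0$, so this route cannot close. The correct (and simpler) argument, which is what \cite{Shmerkin13} actually does, aims for $L^2$ rather than $L^1$ of the Fourier transform: from $\int|\wh{\mu|_A}|^2|\xi|^{-\e}\,d\xi<\infty$ and $|\wh{\nu}(\xi)|^2\le C|\xi|^{-\sigma}$ one gets directly
\[
\int|\wh{(\mu|_A)*\nu}(\xi)|^2\,d\xi \le C\int|\wh{\mu|_A}(\xi)|^2|\xi|^{-\sigma}\,d\xi<\infty
\]
as soon as $\e\le\sigma$, hence $(\mu|_A)*\nu\in L^2$. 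Exhausting $\mu$ by countably many such restrictions then yields $\mu*\nu\ll\cL_d$. Your final paragraph does default to citing \cite{Shmerkin13}, which matches the paper exactly, but the heuristic you offer before that citation would not go through as written.
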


In light of this result, it seems natural to ask if a similar result holds if $D_p\mu=d$ for $p\neq 2$. Although the proof of \cite[Lemma 2.1]{Shmerkin13} does not seem to extend to  other values of $p$, we are able to show that the answer is affirmative:

\begin{thm} \label{thm:abs-cont-convolution}
Let $1<p<\infty$. Let $\mu,\nu\in\cP_d$, with $\mu$ of compact support. If
\begin{align*}
d - D_p\mu< \fdim(\nu) & \quad \text{if } p\in (1,2],\\
(p-1)(d-D_p\mu)<\fdim(\nu) & \quad \text{if } p\in (2,+\infty),
\end{align*}
then $\mu*\nu$ is absolutely continuous with an $L^p$ density. In particular, this is the case if $D_p\mu=d$ and $\fdim\nu>0$.
\end{thm}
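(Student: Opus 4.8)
The plan is to split $\mu*\nu$ into Littlewood--Paley pieces adapted to the Fourier decay of $\nu$, estimate each piece in $L^p$ by playing the physical-space information carried by $D_p\mu$ against the smoothing carried by $\fdim\nu$, and sum. Fix a smooth partition of unity $\sum_{j\ge 0}\chi_j\equiv 1$ on $\R^d$ with $\chi_0$ supported in $\{|\xi|\le 2\}$ and $\chi_j$ supported in the shell $\{2^{j-1}\le|\xi|\le 2^{j+1}\}$ for $j\ge 1$, let $\phi_j$ be the inverse Fourier transform of $\chi_j$, and set $h_j=(\mu*\nu)*\phi_j$. Since $\wh{h_j}=\chi_j\,\wh\mu\,\wh\nu$, I can factor $h_j=g_j*\nu$, where $g_j=\mu*\phi_j$ has spectrum in $\{|\xi|\le 2^{j+1}\}$ (in the shell $\{2^{j-1}\le|\xi|\le 2^{j+1}\}$ for $j\ge 1$). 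If I can show $\|h_j\|_p=O(2^{-\delta j})$ for some $\delta>0$, then $\sum_j h_j$ converges in $L^p$; since its partial sums $(\mu*\nu)*\sum_{j\le J}\phi_j$ also converge to $\mu*\nu$ as distributions, this identifies $\mu*\nu$ with an $L^p$ density and finishes the proof.

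First I would bound $g_j$ in $L^p$ using only the $L^p$-dimension of $\mu$. Fix $\sigma<\fdim(\nu)$ for which the relevant strict inequality of the hypothesis still holds, together with a small $\eps>0$. By Lemma~\ref{lem:equivalence-Lq-dim} (with $q=p$), for all large $j$ one has $\int\mu(B(x,2^{-j}))^p\,dx\le 2^{-j(d+(p-1)(D_p\mu-\eps))}$. Dominating $|\phi_j|$ by a positive radially decreasing Schwartz majorant of the form $2^{jd}\Psi(2^j\cdot)$ and decomposing $\Psi$ dyadically reduces $\|g_j\|_p$ to a rapidly convergent combination of the norms $\|\mu(B(\cdot,2^{-j+k}))\|_p$, and yields
\[
\|g_j\|_p = O_\eps\!\left(2^{j(p-1)(d-D_p\mu+\eps)/p}\right).
\]

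The crux is the second, smoothing, estimate: convolution with $\nu$ gains a power of $2^{-j}$ on functions whose spectrum lies in the shell of radius $\sim 2^j$. On $L^2$ this is immediate from Plancherel and Fourier decay: if $\wh f$ is supported where $|\xi|\sim 2^j$, then $\|f*\nu\|_2\le\big(\sup_{|\xi|\sim 2^j}|\wh\nu(\xi)|\big)\|f\|_2=O(2^{-\sigma j/2})\|f\|_2$, whereas on $L^1$ and on $L^\infty$ Young's inequality gives only the trivial bound $\|f*\nu\|_1\le\|f\|_1$ and $\|f*\nu\|_\infty\le\|f\|_\infty$ (no gain), because $\nu$ has unit mass. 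Interpolating the operator $T_j\colon f\mapsto (P_jf)*\nu$, where $P_j$ is a smooth frequency projection onto a neighbourhood of $\{|\xi|\sim 2^j\}$ (bounded on every $L^q$ uniformly in $j$), by Riesz--Thorin between these endpoints gives, for the band-limited $g_j$ (so that $P_jg_j=g_j$),
\[
\|h_j\|_p = O\!\left(2^{-\sigma j/p'}\|g_j\|_p\right)\ \ (1<p\le 2),\qquad
\|h_j\|_p = O\!\left(2^{-\sigma j/p}\|g_j\|_p\right)\ \ (2<p<\infty),
\]
the two regimes arising precisely because for $p\le 2$ one interpolates $L^2$ against $L^1$, while for $p\ge 2$ one interpolates $L^2$ against $L^\infty$.

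Combining the two bounds and using $1/p'=(p-1)/p$, the exponent of $2^j$ in $\|h_j\|_p$ equals $\tfrac{p-1}{p}(d-D_p\mu+\eps-\sigma)$ when $p\le 2$ and $\tfrac1p\big((p-1)(d-D_p\mu+\eps)-\sigma\big)$ when $p>2$; in either case the hypothesis permits a choice of $\sigma$ and $\eps$ making this exponent negative, which gives the decay $\|h_j\|_p=O(2^{-\delta j})$ and hence $\mu*\nu\in L^p$. The ``in particular'' statement is the case $D_p\mu=d$, where any $\sigma\in(0,\fdim(\nu))$ works. I expect the main obstacle to be the third step: recognizing that Fourier decay should be exploited as an $L^2\to L^2$ smoothing gain on each dyadic shell (rather than attempting to control $\int|\wh\mu|^{p'}$ directly, which is lossy for $p>2$), and then verifying that Riesz--Thorin interpolation against the no-gain endpoints reproduces exactly the two constants $(d-D_p\mu)$ and $(p-1)(d-D_p\mu)$ in the statement. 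The remaining technical points --- the uniform $L^1$ and $L^\infty$ bounds for the shell projections and the positive-majorant reduction in the first step --- are routine.
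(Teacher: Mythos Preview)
Your proposal is correct and follows essentially the same route as the paper: a Littlewood--Paley decomposition, an $L^p$ bound on the mollified $\mu$ coming from Lemma~\ref{lem:equivalence-Lq-dim}, and Riesz--Thorin interpolation of the convolution operator between the $L^2$ gain from Fourier decay of $\nu$ and the trivial $L^1$, $L^\infty$ endpoints. The only cosmetic difference is that the paper places the frequency localization on the $\nu$ side (working with $f\mapsto f*\Delta_k\nu$), whereas you place it on the input via $P_j$; the resulting exponents and the final summation are identical.
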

\begin{proof}
We use a Littlewood-Paley decomposition: let $\psi\in C_0^\infty(\R^d)$ be a function supported on the annulus $\{ 1/2 \le |x| \le 2\}$ with $0\le\psi\le 1$ such that $\sum_{k=0}^\infty \psi_k(x)=1$, with $\psi_k(x)=\psi(2^{-k}x)$ for $k\ge 1$, and $\psi_0\in C_0^\infty(\R^d)$ is supported on $\{ |x|\le 2\}$. See e.g. \cite[Lemma 4.1]{PeresSchlag00} for its construction (we remark that often one defines $\psi_k = \psi(2^{-k}\cdot)$ for all $k\in\Z$; our $\psi_0$ corresponds to adding $\psi(2^{-k}\cdot)$ over all non-positive $k$ and setting $\psi_0(0)=1$).

Let $\Delta_k$ be the multiplier operator with symbol $\psi_k$: $\wh{\Delta_k \eta}=\psi_k \wh{\eta}$; hence $\Delta_k \eta$ is the localization of $\eta$ to frequencies of size $\sim 2^k$. It is enough to prove that there is $\delta>0$ such that
\begin{equation} \label{eq:Lp-LittlewoodPaley}
\|\Delta_k(\mu*\nu)\|_p < O(1) 2^{-\delta k} \quad\text{for all }k\in\N.
\end{equation}
Indeed, $(\mu*\nu)_k =\sum_{i=0}^k \Delta_i(\mu*\nu)$ converges to $\mu*\nu$ weakly, and (\ref{eq:Lp-LittlewoodPaley}) shows that it is
an $L^p$-Cauchy sequence, hence it must converge to $\mu*\nu$ in $L^p$.

Any constants implicit in the $O$ notation below may depend on $p$, but are independent of $k,\sigma$ and $D_p\mu$.

Consider the operator $T_k f= f*\Delta_k\nu$ (in other words, the multiplier with symbol $\psi_k\wh{\nu}$). On the one hand, we have
\begin{align*}
\| T_k f\|_1 &\le \| \Delta_k\nu \|_1 \| f\|_1 \le \| \wc{\psi}_k\|_1 \nu(\R^d) \| f\|_1 = \|\wc{\psi}\|_1\|f\|_1,\\
\| T_k f\|_\infty &\le \| \Delta_k\nu \|_1 \| f\|_\infty \le \| \wc{\psi}_k\|_1 \nu(\R^d) \| f\|_\infty = \|\wc{\psi}\|_1\|f\|_\infty.
\end{align*}
On the other hand, by Plancherel, and using the hypothesis, there exists $\sigma>d-D_p\mu$ (if $1<p\le 2$) or $\sigma>(p-1)(d-D_p\mu)$ (if $p>2$) such that
\[
\| T_k f\|_2 = \| \psi_k \wh{\nu} \wh{f}\|_2 \le O(1)\, 2^{-\sigma k/2} \|f\|_2.
\]
Hence, by Riesz-Thorin,
\begin{align}
\| f*\Delta_k\nu\|_p &\le O(1) 2^{-\sigma k/q}\|f\|_p &\text{if } p\in (1,2];  \label{eq:bound-nu} \\
\| f*\Delta_k\nu\|_p &\le O(1) 2^{-\sigma k/p}\|f\|_p &\text{if } p\in [2,+\infty), \label{eq-bound-nu-ge2}
\end{align}
where $q$ is the conjugate exponent of $p$.

Let $\varphi\in C_0^\infty(\R^d)$ be a function supported on $\{ 1/4\le |x|\le 4\}$ such that $\varphi=1$ on $\supp(\psi)$; set $\varphi_k(x)=\varphi(2^{-k}x)$. Then $\varphi_k\psi_k=\psi_k$, and therefore,
\begin{equation} \label{eq:convolution}
\Delta_k(\mu*\nu) = (\wc\varphi_k*\mu)*\Delta_k\nu.
\end{equation}
Fix $\e>0$ and a sufficiently large $N=N(\e)$; taking $N=(\eps p)^{-1}$ works. Since $\wc\varphi$ is a Schwartz function, and $\wc\varphi_k(x) = 2^{dk}\wc\varphi(2^k x)$, we can estimate
\begin{align}
\| \wc\varphi_k*\mu\|_p^p &\le  2^{pdk} \int \left( \int|\wc\varphi(2^k(x-y))|\, d\mu(y)  \right)^p dx \nonumber\\
&\le 2^{pdk} \int  \left( \int O_\eps(1) \left(1+2^k(x-y)\right)^{-N}\, d\mu(y)  \right)^p dx \nonumber\\
&\le O_\eps(1) 2^{pdk}  \int \mu(B(x,2^{-k(1-\e)}))^p \,dx +  O_\eps(2^{-k}), \label{eq:Lp-estimate-1}
\end{align}
where the last line is obtained by estimating the inner integral in the second line over the domains $B(x,2^{-k(1-\e)})$ and its complement separately.

On the other hand, by Lemma \ref{lem:equivalence-Lq-dim} applied with $r_k=2^{-k(1-\e)}$,
\be
2^{(1-\e)d k}   \int \mu\left(B(x,2^{-k(1-\e)})\right)^p \,dx \le O(1) 2^{-k(1-\e)(p-1)(D_p\mu-\e)}.\label{eq:Lp-estimate-2}
\ee
From \eqref{eq:Lp-estimate-1} and \eqref{eq:Lp-estimate-2}, we deduce
\[
\| \wc\varphi_k*\mu\|_p \le O_\eps(1) 2^{k O(\e)} 2^{k(d-D_p\mu)/q}.
\]
Recalling \eqref{eq:bound-nu}, \eqref{eq-bound-nu-ge2}, \eqref{eq:convolution} and the choice of $\sigma$, we conclude that
\begin{align*}
\|\Delta_k(\mu*\nu)\|_p &\le O_\eps(1) 2^{k(O(\e))} 2^{-\delta k},
\end{align*}
for some $\delta>0$ and arbitrary $\e>0$. This shows that \eqref{eq:Lp-LittlewoodPaley} holds, as desired.
\end{proof}

%\begin{rem}
%A small modification of the proof reveals that in fact the density has a fractional derivative in $L^q$ of some order $s>0$.
%\end{rem}

%%%%%%%%%%%%%%%%%%%%%%%%%%%%%%%%%%%%%%%%%%%%%%%%%%%%%%%%%%%%%%%%

\section{Continuity of $L^q$ dimensions of self-similar measures}
\label{sec:dimensions-of-ssm}

The $L^q$ dimension is not a priori defined for $q=1$. However,  for fixed $n$,
\begin{equation} \label{eq:discrete-Lq-L1-continuity}
\lim_{q\to 1} \frac{\log S_{n,q}\mu}{(1-q)n} =  \frac{ H_n\mu}{n},
\end{equation}
where
\[
H_n\mu = \sum_{I\in\cD_n} \mu(I)\log(1/\mu(I)).
\]
This suggests defining
\[
D_1\mu = \liminf_{n\to\infty} \frac{ H_n\mu}{n}.
\]
The quantity $D_1\mu$ is known as the \emph{entropy dimension} or \emph{information dimension} of $\mu$. In light of \eqref{eq:discrete-Lq-L1-continuity}, it seems reasonable to ask whether $q\mapsto D_q\mu$ is  continuous at $q=1$. It is not hard to construct counterexamples for general measures. However, for self-similar measures, we can establish continuity from the right.

\begin{thm} \label{thm:Lq-dim-continuity}
Let $\mu$ be a homogeneous self-similar measure on $\R^d$ (recall that this means that $\mu=\mu(T,a,p)$ for some $T\in \cS_d$, $a\in\R^{md}$, $p\in\PP_m$). Then
\[
D_1\mu = \lim_{q\to 1^+} D_q\mu.
\]
\end{thm}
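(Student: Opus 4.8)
The plan is to prove the two inequalities $\lim_{q\to1^+}D_q\mu\le D_1\mu$ and $\lim_{q\to1^+}D_q\mu\ge D_1\mu$ separately. The first is soft and holds for any compactly supported measure, while the second is where homogeneity and self-similarity must enter. Throughout set $\phi_n(q)=\frac{\log S_{n,q}\mu}{(1-q)n}$, so that $D_q\mu=\liminf_n\phi_n(q)$ and $\phi_n(1):=\frac{H_n\mu}{n}$.

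For the easy inequality I would use that, for fixed $n$, the function $q\mapsto\log S_{n,q}\mu=\log\sum_{I\in\cD_n}\mu(I)^q$ is convex and vanishes at $q=1$, with derivative $-H_n\mu$ there; hence $\log S_{n,q}\mu\ge-(q-1)H_n\mu$ for $q>1$, which after dividing by $(1-q)n<0$ rearranges to $\phi_n(q)\le\phi_n(1)$. Taking $\liminf_n$ gives $D_q\mu\le D_1\mu$, and since each $\phi_n$ (and hence $D_q\mu$) is non-increasing in $q$, the limit as $q\downarrow1$ exists and satisfies $\lim_{q\to1^+}D_q\mu\le D_1\mu$. This direction uses nothing beyond compact support.

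For the reverse inequality I would record the exact Jensen defect
\[
\mathrm{gap}(n,q):=\phi_n(1)-\phi_n(q)=\frac{1}{(q-1)n}\log\Big(2^{(q-1)H_n\mu}\,S_{n,q}\mu\Big)\ge0,
\]
the nonnegativity being the convexity above. Since $D_q\mu=\liminf_n\phi_n(q)\ge\liminf_n\phi_n(1)-\limsup_n\mathrm{gap}(n,q)=D_1\mu-\limsup_n\mathrm{gap}(n,q)$, it suffices to show $\limsup_n\mathrm{gap}(n,q)\to0$ as $q\downarrow1$. Viewing $\mu(I)$ as a probability weight on $\cD_n$ and writing $Z_{n,I}=\log\mu(I)+H_n\mu$ for the centered information function (whose $\mu$-weighted mean is $0$), one computes $\mathrm{gap}(n,q)=\frac{1}{(q-1)n}\log\sum_{I\in\cD_n}\mu(I)\,2^{(q-1)Z_{n,I}}$. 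Thus the theorem reduces to a single uniform concentration estimate: there is $C>0$ so that for all small $t>0$ and all $n$,
\[
\sum_{I\in\cD_n}\mu(I)\,2^{tZ_{n,I}}\le 2^{Cnt^2},
\]
i.e.\ the information function is sub-Gaussian with variance proxy linear in $n$, \emph{uniformly} in $n$. Granting this, $\mathrm{gap}(n,q)\le C(q-1)\to0$, which completes the reverse inequality.

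The main obstacle is precisely this concentration estimate, and it is where self-similarity is indispensable (note no separation is assumed, so overlaps are genuine). My plan is to use the representation of $\mu=\mu(T,a,p)$ as the distribution of $\sum_{k\ge1}T^{k-1}A_k$ with the $A_k$ i.i.d.\ (Section~\ref{subsec:ssm}). After the routine reduction from dyadic cubes to the natural scales $r^n$ (via Lemma~\ref{lem:equivalence-Lq-dim} and the comparability of $L^q$-sums across comparable grids, \cite[Lemma 2.2]{PeresSolomyak00}), the information $-\log\mu(I_{n,x})$ splits as the symbolic information $-\log p_{i_1\cdots i_n}$ plus an overlap correction. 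The symbolic part is an exact sum of $n$ i.i.d.\ bounded terms, hence sub-Gaussian with variance proxy $O(n)$ by a standard Hoeffding bound. The delicate point is to show that the overlap correction—governed by how many generation-$n$ cylinders land within $O(r^n)$ of a given one, a quantity regenerated by the same homogeneous dynamics—also has sub-Gaussian fluctuations with variance proxy $O(n)$, uniformly in $n$. I expect to control it through the self-similar (stationary) structure of this correction, establishing an almost-additivity of the cumulant generating function $t\mapsto\log\sum_I\mu(I)\,2^{tZ_{n,I}}$ in the spirit of the sub-additivity arguments of \cite{PeresSolomyak00}; bounding the correction uniformly in $n$ against arbitrary overlaps is the step I expect to be hardest.
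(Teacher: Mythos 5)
Your first inequality and the algebraic reduction of the second one are both correct, but the proof has a genuine gap exactly where you flag it: the uniform sub-Gaussian estimate $\sum_{I\in\cD_n}\mu(I)2^{tZ_{n,I}}\le 2^{Cnt^2}$ is asserted, not proved, and it is the entire content of the theorem (everything before it is soft convexity). Worse, what you are asking for is strictly stronger than the theorem: your estimate is equivalent to $\phi_n(1)-\phi_n(q)\le C(q-1)$ \emph{uniformly in $n$}, hence to $D_1\mu-D_q\mu\le C(q-1)$, i.e.\ one-sided Lipschitz continuity of the $L^q$ spectrum at $q=1$ with derivative equal to the entropy dimension. This is a second-order (variance) bound on the information function of a self-similar measure with arbitrary overlaps, and your sketch for the ``overlap correction'' does not indicate how to obtain it; the Peres--Solomyak subadditivity you invoke produces a multiplicative constant $C_q$ bounded away from $1$, which after division by $(q-1)n$ contributes an error of order $\frac{1}{(q-1)n}$ and gives no information as $q\to 1^+$ at any fixed rate in $n$. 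I do not know whether your concentration inequality is even true in this generality; in any case it does not follow from anything you have written.

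The paper's proof shows that no uniform-in-$n$ rate is needed, which is why it succeeds where your reduction stalls. The key technical input is Proposition \ref{prop:Lq-dim-submultiplicative}: $S_{n+m,q}\mu\le M^{q-1}S_{n,q}\mu\,S_{m,q}\mu$ with $M$ \emph{independent of $q$} (this sharpening of Peres--Solomyak, obtained by working with genuine partitions of code space rather than bounded coverings, is the real work). Fekete's lemma applied to the subadditive sequence $\log S_{n,q}\mu+(q-1)\log M$ then gives, for \emph{every single} $n$,
\[
D_q\mu\;\ge\;\frac{\log S_{n,q}\mu}{(1-q)n}-\frac{\log M}{n},
\]
so one may first fix one large $n$ with $H_n\mu/n$ close to $D_1\mu$, then let $q\downarrow 1$ using only the fixed-$n$ convergence \eqref{eq:discrete-Lq-L1-continuity}, and only at the end let $n\to\infty$ to kill the $\frac{\log M}{n}$ error. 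In other words, the crucial point is the $q$-dependence $M^{q-1}$ of the almost-multiplicativity constant, not any uniform control of the Jensen defect. If you want to salvage your route, you should either prove your concentration estimate (which would be a stronger and independently interesting result) or replace it by this weaker almost-sub-multiplicativity statement, which is what actually closes the argument.
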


\begin{rem}
The proof can be adapted to show that the result holds for all (not necessarily homogeneous) self-similar measures, and even for
self-conformal measures on $\R^d$ under standard bounded distortion assumptions. To see this, one needs to modify the proof of \cite{PeresSolomyak00} with the sharper estimates we develop in the proof of Proposition \ref{prop:Lq-dim-submultiplicative} below.
\end{rem}

Together with Theorem \ref{thm:abs-cont-convolution}, this result is the key to the proof that the densities are in some $L^q$ space, $q>1$, in the settings of Theorems \ref{mainthm:parametrized}, \ref{mainthm:projection-measures} and \ref{mainthm:convolutions}, but we believe that it may also be of independent interest. We prove Theorem \ref{thm:Lq-dim-continuity} at the end of this section, after establishing two preliminary results.

\begin{lemma} \label{lem:Holder-partitions}
Let $\mu$ be a probability measure on a set $K$. Suppose $\{ A_i\}_{i\in I}$ and $\{B_j\}_{j\in J}$ are two measurable partitions of $K$ such that each $A_i$ is covered by at most $M$ of the sets $B_j$. Then, for any $q\ge 1$,
\[
\sum_{i\in I} \mu(A_i)^q \le M^{q-1} \sum_{j\in J}\mu(B_j)^q.
\]
\end{lemma}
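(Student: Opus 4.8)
The plan is to prove the inequality
\[
\sum_{i\in I} \mu(A_i)^q \le M^{q-1} \sum_{j\in J}\mu(B_j)^q
\]
by fixing a single set $A_i$, expressing its measure as a sum of measures over the (at most $M$) sets $B_j$ that cover it, and applying a convexity bound. First I would note that since $\{B_j\}_{j\in J}$ is a partition of $K$, we have for each fixed $i$ that $\mu(A_i)=\sum_{j:B_j\cap A_i\neq\emptyset}\mu(A_i\cap B_j)\le\sum_{j\in J(i)}\mu(B_j)$, where $J(i)=\{j:B_j\cap A_i\neq\emptyset\}$ has cardinality at most $M$.

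The key step is then the power-mean (or H\"older) inequality applied to this sum of at most $M$ nonnegative terms. For $q\ge 1$ and nonnegative reals $x_1,\dots,x_M$, convexity of $t\mapsto t^q$ gives $\left(\sum_{j=1}^M x_j\right)^q\le M^{q-1}\sum_{j=1}^M x_j^q$; this follows from Jensen's inequality applied to the uniform average $\tfrac1M\sum x_j$. Applying this with $x_j=\mu(A_i\cap B_j)$ (padding with zeros if $|J(i)|<M$) yields
\[
\mu(A_i)^q \le M^{q-1}\sum_{j\in J(i)}\mu(A_i\cap B_j)^q \le M^{q-1}\sum_{j\in J(i)}\mu(B_j)^q,
\]
where the last inequality uses $\mu(A_i\cap B_j)\le\mu(B_j)$.

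Finally I would sum this bound over all $i\in I$. Since $\{A_i\}_{i\in I}$ is a partition of $K$, each $B_j$ meets the sets $A_i$ in a way that, reversing the covering relation, each index $j$ appears in $J(i)$ for those $i$ with $A_i\cap B_j\neq\emptyset$. Summing the per-$i$ bound and interchanging the order of summation gives $\sum_{i\in I}\mu(A_i)^q\le M^{q-1}\sum_{j\in J}\bigl|\{i:A_i\cap B_j\neq\emptyset\}\bigr|\,\mu(B_j)^q$, which is slightly too weak as stated. The cleaner route, and the one I would actually use, is to keep the sharper intermediate step $\mu(A_i)^q\le M^{q-1}\sum_{j\in J(i)}\mu(A_i\cap B_j)^q$ and sum \emph{that}: since $\{A_i\}$ partitions $K$, for each fixed $j$ we have $\sum_{i}\mu(A_i\cap B_j)^q\le\bigl(\sum_i\mu(A_i\cap B_j)\bigr)^q=\mu(B_j)^q$ (using $q\ge1$ and subadditivity of $t\mapsto t^q$ on nonnegative reals, i.e.\ $\sum x_i^q\le(\sum x_i)^q$). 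Interchanging the order of summation then gives $\sum_{i\in I}\mu(A_i)^q\le M^{q-1}\sum_{j\in J}\sum_{i\in I}\mu(A_i\cap B_j)^q\le M^{q-1}\sum_{j\in J}\mu(B_j)^q$, as desired. The main thing to be careful about is choosing the intermediate inequality so that the double sum telescopes correctly; the naive bound $\mu(A_i\cap B_j)\le\mu(B_j)$ overcounts and must be avoided in favor of the subadditivity bound over the partition $\{A_i\}$.
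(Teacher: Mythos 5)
Your final argument is correct and is essentially the paper's own proof: write $\mu(A_i)=\sum_j\mu(A_i\cap B_j)$, apply the power-mean inequality over the at most $M$ nonzero terms, interchange the sums, and finish with $\sum_i\mu(A_i\cap B_j)^q\le\bigl(\sum_i\mu(A_i\cap B_j)\bigr)^q=\mu(B_j)^q$. You correctly identified and avoided the pitfall of invoking $\mu(A_i\cap B_j)\le\mu(B_j)$ before summing over $i$, which is exactly the point of the paper's arrangement of the estimate.
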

\begin{proof}
Using H\"{o}lder's inequality and the inequality $\sum_i p_i^q\le (\sum_i p_i)^q$ for $q\ge 1$, we estimate
\begin{align*}
\sum_{i\in I} \mu(A_i)^q &= \sum_{i\in I} \left( \sum_{j\in J} \mu(A_i\cap B_j) \right)^q\\
&\le \sum_{i\in I} M^{q-1} \sum_{j\in J} \mu(A_i\cap B_j)^q\\
&= M^{q-1} \sum_{j\in J}\sum_{i\in I} \mu(A_i\cap B_j)^q\\
&\le M^{q-1} \sum_{j\in J} \mu(B_j)^q.
\end{align*}
\end{proof}

The above lemma will be used repeatedly in the proof of the following sub-multiplicativity result for the $L^q$ dimension.
\begin{prop} \label{prop:Lq-dim-submultiplicative}
Let $\mu=\mu(T,a,p)$ for some $T\in \cS_d$, $a\in\R^{md}$, $p\in\PP_m$; i.e. $\mu$ is a homogeneous self-similar measure on $\R^d$.
 Then there exists $M>1$ such that for all $q>1$,
\[
S_{n+m,q}\mu \le M^{q-1} \, S_{n,q}\mu \, S_{m,q}\mu.
\]
\end{prop}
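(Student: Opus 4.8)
The plan is to exploit the self-similar (homogeneous) structure of $\mu=\mu(T,a,p)$ to relate the partition sum $S_{n+m,q}\mu$ at scale $2^{-(n+m)}$ to a ``self-similar'' refinement. The key observation is that $\mu=\sum_{j\in[m]}p_j\,f_j\mu$, where $f_j(x)=Tx+a_j$, so that iterating yields $\mu=\sum_{\iii\in[m]^N}p_\iii\,f_\iii\mu$ with $p_\iii=p_{i_1}\cdots p_{i_N}$ and $f_\iii=f_{i_1}\circ\cdots\circ f_{i_N}$, a composition that is a similarity with contraction ratio $r^N$ (where $r$ is the contraction ratio of $T$). Because $f_\iii$ scales by $r^N$, the pieces $f_\iii\mu$ ``live'' at a scale that can be matched, up to a bounded factor, with a dyadic scale of the form $2^{-\ell}$; the whole point is to choose the coding length $N$ so that the cylinder pieces at that level correspond to a dyadic scale compatible with $2^{-n}$, and then to decompose each cylinder piece further to reach scale $2^{-(n+m)}$.

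The main mechanism I would use repeatedly is Lemma~\ref{lem:Holder-partitions}: whenever one has two partitions where each cell of the coarser one meets at most $M$ cells of the finer one, the $q$-sums satisfy $\sum\mu(A_i)^q\le M^{q-1}\sum\mu(B_j)^q$. First I would fix an integer $N$ with $2^{-N}\le r<2^{-(N-1)}$ (roughly $N\approx\log(1/r)$), so that applying one level of the IFS corresponds, up to a bounded geometric factor, to refining the dyadic scale by $N$ steps. The crucial self-similarity identity is that for a cylinder $f_\iii$, the measure $f_\iii\mu$ of a dyadic cube $I$ at scale $2^{-(n+m)}$ equals $p_\iii$ times $\mu$ of the preimage $f_\iii^{-1}(I)$, which is (up to bounded distortion in how it sits relative to the dyadic grid) a cube of side $\sim 2^{-(n+m-N)}$. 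The bounded-overlap constant $M$ absorbs both the geometric mismatch between $r^N$ and $2^{-N}$ and the fact that the affine image $f_\iii^{-1}(\mathcal{D}_{n+m})$ is not exactly the dyadic grid but is comparable to it.

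Concretely, I would argue: expand $\mu$ one IFS-level deep, so $S_{n+m,q}\mu=\sum_{I\in\mathcal{D}_{n+m}}\big(\sum_j p_j (f_j\mu)(I)\big)^q$. Using $(\sum_j x_j)^q$ together with an application of Lemma~\ref{lem:Holder-partitions} to compare the grid $\mathcal{D}_{n+m}$ with the push-forward grids $f_j(\mathcal{D}_?)$, one gets $S_{n+m,q}\mu\le M^{q-1}\sum_j p_j^q\, S_{n+m-N,q}\mu=M^{q-1}S_{N,q}\mu\cdot S_{n+m-N,q}\mu$ after recognizing $\sum_j p_j^q$ as (comparable to) $S_{N,q}\mu$ at the scale corresponding to the first IFS generation. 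Iterating this one-step inequality, or more directly splitting the code of length corresponding to $n+m$ into a block of length matching $n$ and a block matching $m$, yields the desired $S_{n+m,q}\mu\le M^{q-1}S_{n,q}\mu\,S_{m,q}\mu$; the factor $M^{q-1}$ arising exactly once is the reflection of applying the H\"older/overlap lemma a single time at the splitting scale. Here I would use that $S_{n,q}\mu=\Theta(S_{n',q}\mu)$ whenever $|n-n'|=O(1)$ (as in \cite[Lemma 2.2]{PeresSolomyak00}), which lets me freely interchange the dyadic scale $2^{-n}$ with the IFS-induced scale $r^{N}$ at the cost of enlarging $M$.

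The step I expect to be the main obstacle is controlling the mismatch between the IFS scaling ratio $r$ (or $r^N$) and the dyadic scaling $2^{-n}$, and ensuring the overlap constant $M$ can be taken \emph{uniform in $q$} and independent of $n,m$. The subtlety is that the image $f_\iii(\mathcal{D}_k)$ of a dyadic partition is a partition by cubes of side $r^N 2^{-k}$ that is translated and scaled relative to the standard grid $\mathcal{D}_{n+m}$; I must verify that each cube of one partition meets only $O_d(1)$ cubes of the other, with a constant $M$ depending only on $d$ and $r$ (hence on the fixed data $T$), so that the $M^{q-1}$ factor is legitimately the same $M$ for every $q>1$. Once this geometric bounded-overlap fact is nailed down, the inequality follows by a clean application of Lemma~\ref{lem:Holder-partitions} combined with the self-similarity identity, and submultiplicativity then gives (via Fekete's lemma) the existence of the limit defining $D_q\mu$, which is what Theorem~\ref{thm:Lq-dim-continuity} will ultimately require.
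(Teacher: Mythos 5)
Your high-level scheme is the same as the paper's (which in turn follows Peres--Solomyak): match the IFS scale with a dyadic scale, use the self-similarity identity $f_\iii\mu(R)=\mu(f_\iii^{-1}R)$, and run everything through Lemma~\ref{lem:Holder-partitions} so that each constant enters as $O(1)^{q-1}$. You also correctly isolate the one point that matters, namely that the constant must have the form $M^{q-1}$ with $M$ uniform in $q,n,m$. But the concrete route you sketch has a genuine gap. The ``iterate the one-step inequality'' version cannot work: iterating $S_{j+N,q}\mu\le M^{q-1}S_{N,q}\mu\,S_{j,q}\mu$ roughly $n/N$ times produces a factor $M^{(n/N)(q-1)}$, and converting the resulting power $(S_{N,q}\mu)^{n/N}$ back into $S_{n,q}\mu$ would require a \emph{super}multiplicative lower bound that is not available. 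So you are forced into the direct splitting of the code at depth $h$ with $\lam^{h+1}\le 2^{-n}<\lam^h$, which you mention only in passing and do not carry out.

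The real missing idea is how to compare the symbolic structure at depth $h$ with the dyadic grid $\cD_n$ when $h$ grows with $n$. Without any separation assumption, a single cube of $\cD_n$ can meet exponentially many (in $n$) cylinders of length $h$, so the bounded-overlap hypothesis of Lemma~\ref{lem:Holder-partitions} fails in that direction, and your identification of $\sum_{\iii}p_\iii^q$ with $S_{n,q}\mu$ is false in general (take all translations equal: the left side is small while $\mu$ is a point mass and $S_{n,q}\mu=1$). The paper's device is to build a genuine symbolic \emph{partition} $\{Q'\}_{Q\in\cD_n}$ of $[m]^\N$ by assigning each length-$h$ cylinder to the lexicographically first cube of $\cD_n$ that its image meets; this partition has overlap $O(1)$ with $\{\Pi^{-1}Q\}$ (constant depending only on $d$ and the contraction ratio), giving $\sum_Q\eta(Q')^q\le O(1)^{q-1}S_{n,q}\mu$. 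One then applies H\"older across this bounded-overlap partition to localize, and Minkowski's inequality (constant exactly $1$) \emph{within} each $Q'$ to reduce to $p_\iii^q\,S_{m,q}\mu$ via self-similarity. Without this two-step localization, the sum over cylinders costs either $(\#\text{cylinders})^{q-1}$ (naive H\"older, ruining uniformity in $n$) or the entire factor $S_{n,q}\mu$ (Minkowski applied globally), so the argument as outlined does not close.
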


We remark that in the course of the proof of \cite[Theorem 1.1]{PeresSolomyak00}, it is shown that (for a more general class of measures) $S_{n+m,q}\mu \le C_q\, S_{n,q}\mu\, S_{m,q}\mu$, for some constant $C_q>1$. Although $C_q$ is not made explicit, one can check that there is $C>1$ such that $C_q\ge C$ for all $q$ and  (as will become apparent later) this is insufficient for the application to Theorem \ref{thm:Lq-dim-continuity}. Hence, although we use the general scheme from \cite{PeresSolomyak00}, we also need to introduce new, more efficient bounds. Roughly speaking, we need to work with partitions at all times, rather than just efficient coverings.

\begin{proof}[Proof of Proposition \ref{prop:Lq-dim-submultiplicative}]
Let $\lam\in (0,1)$ be the contraction ratio of $T$.
After iterating the IFS if needed we can assume $\lam<1/2$. Fix $n,m\in\N$, and let $h$ be the integer such that
\[
\lam^{h+1} \le 2^{-n} < \lam^h.
\]
It will be more convenient to work in the code space. We will denote the Bernoulli measure $p^\N$ on $[m]^\N$ by $\eta$, and the coding map by $\Pi$. Hence, $\mu=\Pi\eta$, where $\Pi(\iii)=\sum_{n=1}^\infty T^{n-1} a_{i_n}$.
We want to replace Euclidean cubes $Q\in \cD_n$ by symbolic analogues $Q'$ which are unions of cylinders of comparable size, and which
partition the code space. So we need to assign some cylinders to each dyadic cube $Q$ of size $2^{-n}$ so that every cylinder is assigned to one and only one cube $Q$. To this end, we label all cubes in $\cD_n$ by their coordinates and consider the induced lexicographic order $\prec$.

For each $Q\in\cD_n$, we consider $Q'\subset [m]^\N$ defined as follows:
\[
Q' = \bigcup\{ [\iii]: \iii\in\Sigma_Q\},
\]
where
\[
\Sigma_Q = \{ \iii\in [m]^h, \Pi[\iii]\cap Q \neq\varnothing \text{ and } \Pi[\iii]\cap R =\varnothing \text{ if  } R\in \cD_n,\ R \prec Q \}.
\]
Recall that for $\iii\in [m]^h$, the symbolic cylinder $[\iii]$ consists of all infinite words in $[m]^\N$ that start with $\iii$. Notice that:
\begin{enumerate}
\item \label{it:Q'-is-partition} $\{ Q':Q\in\cD_n \}$ is a partition of $[m]^\N$;
\item \label{it:Q'-intersects-bounded-cubes} each $\Pi Q'$ intersects at most $O(1)$ cubes in $\cD_n$, and vice versa;
\item \label{it:Q'-contains-full-cylinders} For each $\iii\in [m]^h$ and each $Q\in\cD_n$, either $[\iii]\subset Q'$ or $[\iii] \cap Q'=\varnothing$.
\end{enumerate}
Properties (1) and (3) are clear by construction. Property (2) holds, since the diameter of each $\Pi[\iii]$, with $\iii\in [m]^h$, is $O(1)2^{-n}$, and
every $\Pi[\iii]$, for $[\iii]\in Q'$, intersects $Q$.
By \eqref{it:Q'-is-partition}, \eqref{it:Q'-intersects-bounded-cubes} and Lemma \ref{lem:Holder-partitions},
\be \label{eq:self-similarity-Lq-1}
\sum_{Q\in\cD_n} \eta(Q')^q \le O(1)^{q-1} \sum_{Q\in\cD_n} \eta(\Pi^{-1}Q)^q = O(1)^{q-1} \, S_{n,q}\mu.
\ee
We also claim that, for a given $\iii\in [m]^h$,
\begin{equation} \label{eq:self-similarity-Lq}
\sum_{R\in\cD_{m+n}}  \eta(\Pi^{-1}R\cap [\iii])^q \le O(1)^{q-1}\, p_\iii^q \, S_{m,q}\mu.
\end{equation}
Indeed, since $\eta$ is Bernoulli, $\eta(A\cap [\iii]) = p_\iii \eta(\sigma^h(A\cap [\iii]))$ for any set $A\subset [m]^\N$. The sets
\[
\{ \sigma^h(\Pi^{-1}R\cap  [\iii]) :R\in\cD_{m+n}\}
\]
form a partition of $[m]^\N$, and each $\sigma^h(\Pi^{-1}R\cap [\iii])$ can be covered by $O(1)$ of the sets $\Pi^{-1}Q$, $Q\in\cD_m$.
To see why the last statement holds, observe that $\Pi(\sigma^h(\Pi^{-1}R\cap [\iii]))\cap Q = f_{\iii}^{-1}R\cap Q$. Since $f_{\iii}$ has
contraction ratio $\lam^h\in (2^{-n},2^{-n+1}]$, the set $f_{\iii}^{-1}R$ has diameter $O(1)2^{-n}$ for $R\in \cD_{m+n}$.
Hence \eqref{eq:self-similarity-Lq} follows from Lemma \ref{lem:Holder-partitions}.

We conclude
\begin{align*}
S_{n+m,q}\mu &= \sum_{R\in\cD_{m+n}} \eta(\Pi^{-1}R)^q \\
&= \sum_{R\in\cD_{m+n}} \left(  \sum_{Q\in\cD_n} \eta(\Pi^{-1}R \cap Q') \right)^q   & \text{By (1)}\\
&\le O(1)^{q-1} \sum_{R\in\cD_{m+n}} \sum_{Q\in\cD_n} \eta(\Pi^{-1}R\cap Q')^q  & \text{By H\"{o}lder and (2)}\\
&=  O(1)^{q-1} \sum_{Q\in\cD_n} \sum_{R\in\cD_{m+n}}  \left(\sum_{\iii\in\Sigma_Q} \eta(\Pi^{-1}R\cap [\iii])\right)^q & \text{By }\eqref{it:Q'-contains-full-cylinders}\\
&\le O(1)^{q-1} \sum_{Q\in\cD_n} \left(\sum_{\iii\in\Sigma_Q}  \left(\sum_{R\in\cD_{m+n}}  \eta(\Pi^{-1}R\cap [\iii])^q\right)^{1/q}\right)^q & \text{Minkowski's ineq.}\\
&\le O(1)^{q-1} \sum_{Q\in\cD_n} \left(\sum_{\iii\in\Sigma_Q} O(1)^{(q-1)/q} p_\iii (S_{m,q}\mu)^{1/q}\right)^q &\text{By } \eqref{eq:self-similarity-Lq}\\
&= O(1)^{q-1} \, S_{m,q}\mu \sum_{Q\in\cD_n} \eta(Q')^q \\
&\le O(1)^{q-1} \, S_{m,q}\mu \,S_{n,q}\mu. & \text{By }\eqref{eq:self-similarity-Lq-1}
\end{align*}
\end{proof}

We can now easily deduce Theorem \ref{thm:Lq-dim-continuity}.

\begin{proof}[Proof of Theorem \ref{thm:Lq-dim-continuity}]
It is known that $D_q\mu \le D_1\mu$ for all $q>1$, see \cite[Theorem 1.4]{FLR02}. Thus we need to establish that $\liminf_{q\to 1^+} D_q\mu \ge D_1\mu$.

Fix $\e>0$ and choose $n\in\N$ such that $1/n<\e$ and $H_n\mu > (1-\e) n D_1\mu$. In view of \eqref{eq:discrete-Lq-L1-continuity}, there is $\delta>0$ such that if  $|q-1|<\delta$, then
\[
\frac{\log S_{n,q}\mu}{(1-q)n} > (1-2\e) D_1\mu.
\]
 On the other hand, by Proposition \ref{prop:Lq-dim-submultiplicative} there is $M>1$ such that, if $q\ge 1$, then the sequence
\[
( \log S_{n,q}\mu+ (q-1)\log M )_{n\in\N}
\]
is sub-additive, whence, for $q\in (1,1+\delta)$,
\begin{align*}
D_q\mu &= \lim_{m\to\infty}\frac{\log S_{m,q}\mu}{(1-q)m} \\
&= \sup_{m\in \N} \frac{\log S_{m,q}\mu+(q-1)\log M}{(1-q)m} \\
&\ge \frac{\log S_{n,q}\mu}{(1-q)n} - \frac{\log M}{n}\\
&\ge (1-2\e) D_1\mu - O(\e).
\end{align*}
Since $\e$ was arbitrary, this completes the proof.
\end{proof}

We record the following consequence of Theorems \ref{thm:abs-cont-convolution} and \ref{thm:Lq-dim-continuity}; this is the statement that will be applied in the proofs of our main results.
\begin{cor} \label{cor:convolution-with-density-in-Lq}
Let $\nu$ be a self-similar measure for a homogeneous IFS on $\R^d$ (that is, $\nu=\mu(T,a,p)$ for appropriate $T,a,p$) with $\hdim\nu=d$, and let $\eta\in\mathcal{D}_d$. Then $\nu*\eta$ is absolutely continuous with a density in $L^q$ for some $q>1$.
\end{cor}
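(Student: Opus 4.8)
The plan is to read this off directly from Theorems~\ref{thm:abs-cont-convolution} and~\ref{thm:Lq-dim-continuity}; the only point requiring care is the identification of the relevant dimensions. First I would unpack the two hypotheses. Since $\eta\in\cD_d$, we have $|\wh\eta(\xi)|=O_\eta(|\xi|^{-\sigma_0})$ for some $\sigma_0>0$, and by the definition of Fourier dimension this means $\sigma=2\sigma_0$ is admissible in the supremum defining $\fdim\eta$, so $\fdim\eta\ge 2\sigma_0>0$. On the other side, $\nu$ is a homogeneous self-similar measure, hence supported on the (compact) attractor $A(T,a)$, so it is a legitimate candidate for the compactly supported factor in Theorem~\ref{thm:abs-cont-convolution}.

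The heart of the argument is the claim that $D_1\nu=d$. The upper bound $D_1\nu\le d$ is the maximal-entropy estimate: any probability measure on $\R^d$ of bounded support satisfies $H_n\nu=\sum_{I\in\cD_n}\nu(I)\log(1/\nu(I))\le nd+O(1)$, whence $D_1\nu=\liminf_n H_n\nu/n\le d$. For the matching lower bound I would use $\hdim\nu\le D_1\nu$, which holds for every compactly supported measure: the lower Hausdorff dimension of $\nu$ equals the $\nu$-essential infimum of the lower local dimension $\liminf_{r\to0}\log\nu(B(x,r))/\log r$, and this essential infimum is dominated by the entropy average $\liminf_n H_n\nu/n=D_1\nu$. (For a self-similar $\nu$ one may alternatively invoke exact dimensionality, which gives $\hdim\nu=D_1\nu$ outright.) Combining the two bounds with the hypothesis $\hdim\nu=d$ forces $D_1\nu=d$.

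With this in hand the rest is a substitution. Because $\nu$ is a homogeneous self-similar measure, Theorem~\ref{thm:Lq-dim-continuity} applies and yields $\lim_{q\to1^+}D_q\nu=D_1\nu=d$, so that $d-D_q\nu\to0$ as $q\downarrow1$. Since $\fdim\eta>0$, I may therefore fix some $q\in(1,2]$ close enough to $1$ that
\[
d-D_q\nu<\fdim\eta.
\]
This is precisely the hypothesis of Theorem~\ref{thm:abs-cont-convolution} in the range $p=q\in(1,2]$, applied with $\nu$ playing the role of the compactly supported measure ``$\mu$'' and $\eta$ playing the role of ``$\nu$''. The theorem then gives that $\nu*\eta$ is absolutely continuous with a density in $L^q$, and $q>1$, which is exactly the assertion.

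The single delicate step is the identity $D_1\nu=d$, i.e.\ the inequality $\hdim\nu\le D_1\nu$ relating the Hausdorff dimension of the measure to its information dimension; everything else is a direct insertion into the two main results of this section. I expect this to be routine given the exact dimensionality of self-similar measures, so the corollary should follow with essentially no additional work.
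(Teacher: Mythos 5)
Your proposal is correct and follows the same route as the paper: both arguments reduce the corollary to Theorems~\ref{thm:abs-cont-convolution} and~\ref{thm:Lq-dim-continuity} after observing that $D_1\nu=\hdim\nu=d$ (the paper cites \cite{FengHu09} for the coincidence of Hausdorff and entropy dimension, which you also invoke as an alternative) and that $\eta\in\cD_d$ gives $\fdim\eta>0$. Your write-up merely makes explicit the bookkeeping the paper leaves implicit.
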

\begin{proof}
We start by recalling that Hausdorff and entropy dimensions coincide for self-similar measures, see \cite{FengHu09} (in fact, the proofs of all results we have stated for the Hausdorff dimension of self-similar measures actually apply to entropy dimension).

By assumption, there is $\sigma>0$ such that $\widehat{\eta}(\xi)=O(|\xi|^{-\sigma/2})$. By Theorem \ref{thm:Lq-dim-continuity}, there is $q\in (1,2]$ such that $D_q(\nu)>1-\sigma$. The conclusion then follows from Theorem \ref{thm:abs-cont-convolution}.
\end{proof}

\section{Proofs of main results}
\label{sec:proofs}

\subsection{Proof of Theorem \ref{mainthm:parametrized}}

Empowered with the tools developed in the previous sections, and the dimension results from \cite{NPS12,HochmanShmerkin12,Hochman13}, we can now give short proofs of our main results. We begin here with Theorem \ref{mainthm:parametrized}. We first state a result that follows from Hochman's work \cite{Hochman13,Hochman14}; notice that it is very close to the statement of Theorem \ref{mainthm:parametrized}, except that the conclusion is about Hausdorff dimension rather than absolute continuity.

\begin{thm} \label{thm:dimension-parametrized}
Let $\{ (\lam_u,a_u) \}_{u\in U}$ be a real-analytic map $\R^\ell\supset U\to \cS_1\times \R^m$ such that the following non-degeneracy condition holds: for any distinct infinite sequences $\iii,\jjj\in [m]^\N$, there is a parameter $u$ such that $\Pi_u(\iii)\neq \Pi_u(\jjj)$, where $\Pi_u$ is the projection map corresponding to $(\lam_u,a_u)$.

Then there exists a set $E\subset U$ of Hausdorff (and even packing) dimension at most $\ell-1$, such that for any $u\in U\setminus E$ and any $p\in\PP_m$,
\[
\hdim(\mu(\lam_u,a_u,p)) = \min(s(\lam_u,m,p),1).
\]
\end{thm}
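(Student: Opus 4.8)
The plan is to deduce the statement directly from Hochman's inverse theorem for self-similar measures on the line together with his analysis of parametric families \cite{Hochman13,Hochman14}; the hypotheses have been arranged precisely to match his. First I would record the geometric separation function: for a finite word $\iii=i_1\cdots i_n\in[m]^n$ write $b_\iii(u)=\sum_{k=1}^n \lam_u^{k-1}a_{u,i_k}$ for the translation part of $f_\iii$, and set
\[
\Delta_n(u)=\min\{\,|b_\iii(u)-b_\jjj(u)| : \iii,\jjj\in[m]^n,\ \iii\neq\jjj\,\}.
\]
Hochman's theorem gives $\hdim\mu(\lam_u,a_u,p)=\min(s(\lam_u,m,p),1)$ for \emph{every} $p\in\PP_m$ as soon as $\Delta_n(u)$ does not decay super-exponentially, i.e. as soon as $\tfrac1n\log\Delta_n(u)\not\to-\infty$. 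The crucial point is that $\Delta_n$ is a purely geometric quantity of the IFS, independent of the weight $p$ (which enters the dimension formula only through $h(p)$ in $s$). Hence the set
\[
E=\Big\{\,u\in U : \tfrac1n\log\Delta_n(u)\to-\infty\,\Big\}
\]
serves all $p$ simultaneously, exactly as the statement requires, and everything reduces to proving $\hdim E\le\ell-1$ (and the same for packing dimension).

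For the dimension bound I would use real-analyticity. Each difference $g_{\iii,\jjj}(u)=b_\iii(u)-b_\jjj(u)=\sum_{k=1}^n\lam_u^{k-1}(a_{u,i_k}-a_{u,j_k})$ is a finite sum of real-analytic functions, hence real-analytic on $U$. Moreover it is not identically zero: appending a common infinite tail $t$ to distinct $\iii,\jjj\in[m]^n$ produces distinct sequences $\iii t\neq\jjj t$ with $\Pi_u(\iii t)-\Pi_u(\jjj t)=g_{\iii,\jjj}(u)$, so the non-degeneracy hypothesis furnishes a $u$ with $g_{\iii,\jjj}(u)\neq0$. Consequently each exact-overlap locus $\{g_{\iii,\jjj}=0\}$ is a proper real-analytic subvariety, of dimension at most $\ell-1$. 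To pass from these uncountably many codimension-one loci to a bound on $E$, I would cover $E$ scale by scale. Fixing any super-exponentially small sequence $\delta_n$, one has $E\subset\bigcup_{N_0}\bigcap_{n\ge N_0}\{\Delta_n<\delta_n\}$, and
\[
\{\Delta_n<\delta_n\}\subset\bigcup_{\iii\neq\jjj\in[m]^n}\{\,|g_{\iii,\jjj}|<\delta_n\,\}.
\]
A {\L}ojasiewicz-type inequality places each analytic sublevel set $\{|g_{\iii,\jjj}|<\delta_n\}$ inside a $C\delta_n^{\gamma}$-neighbourhood of the corresponding $(\ell-1)$-dimensional zero set, so it is covered by $\lesssim (\delta_n^{\gamma})^{-(\ell-1)}$ balls of radius $\delta_n^{\gamma}$. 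Since there are only $m^{2n}=\exp(O(n))$ pairs while $\delta_n$ is super-exponentially small, the symbolic count is sub-polynomial in $1/\delta_n$, and for every $s>\ell-1$ the corresponding Hausdorff sum $m^{2n}\,C\,(\delta_n^{\gamma})^{-(\ell-1)}(\delta_n^{\gamma})^{s}$ tends to $0$ as $n\to\infty$. This yields $\hdim E\le\ell-1$; because the estimate is a genuine box-counting cover at arbitrarily fine scales, it also controls the packing dimension, giving the ``even packing'' clause.

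The main obstacle is the uniformity required in the second step: the {\L}ojasiewicz constant and exponent $\gamma$ controlling $\{|g_{\iii,\jjj}|<\delta\}$ must be chosen \emph{uniformly} over the entire family $\{g_{\iii,\jjj}\}$ (over all lengths $n$), since otherwise the per-scale covering bounds cannot be assembled into a single dimension estimate. This is not automatic from analyticity of the individual functions; it genuinely uses the structure of the family — essentially that $g_{\iii,\jjj}$ depends, up to an exponentially small tail, only on the initial segment on which $\iii$ and $\jjj$ first disagree, so that at resolution $\delta$ the uncountable family collapses to a controlled finite one, on which a compactness argument over compact subsets of $U$ extracts uniform constants. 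Establishing this reduction is exactly the technical heart supplied by Hochman's parametric analysis, and it is what I would cite precisely (or reproduce); the geometric criterion and the final counting are then routine.
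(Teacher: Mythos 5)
Your proposal takes essentially the same route as the paper: the paper gives no self-contained proof of this theorem, deferring entirely to Hochman \cite{Hochman13,Hochman14} (and to \cite{FraserShmerkin14} for the translations-only special case that is actually used later), and your sketch is an accurate reconstruction of the argument behind that citation, correctly isolating the uniform covering estimate for sublevel sets of the analytic family $\{g_{\iii,\jjj}\}$ as the technical heart to be quoted from Hochman, and correctly noting that the exceptional set is determined by the weight-free quantity $\Delta_n$ so that one set $E$ works for all $p$.

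One repairable slip in the covering step: for a \emph{fixed} super-exponentially decaying sequence $\delta_n$, the containment $E\subset\bigcup_{N_0}\bigcap_{n\ge N_0}\{\Delta_n<\delta_n\}$ is false, since $\tfrac1n\log\Delta_n(u)\to-\infty$ does not force $\Delta_n(u)$ to decay faster than a prescribed super-exponential rate (e.g.\ $\Delta_n=2^{-n\log n}$ versus $\delta_n=2^{-n^2}$). The standard fix is to take $\delta_n=2^{-Rn}$ for a fixed large $R$, for which the containment does hold; the count $m^{2n}\cdot C\,(\delta_n^{\gamma})^{-(\ell-1)}$ balls of radius $\delta_n^{\gamma}$ then gives $\hdim\le\ell-1+O_{\gamma}(\log m/R)$ for each $R$, and one lets $R\to\infty$ at the end (countable stability of Hausdorff and packing dimension handles the unions over $R$ and $N_0$).
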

This theorem follows from more general results in \cite{Hochman14}. For $\ell=1$ it is a special case of \cite[Theorem 1.8]{Hochman13}, and the general case follows along similar lines although there are some new technical issues.
We should point that the special case in which $\lam_u$ is constant and only the translations depend on the parameters follows from \cite[Theorem 1.8]{Hochman13} rather easily, see \cite[Prop. 4.3]{FraserShmerkin14} for the special case in which the translations \emph{are} the parameters. This is all that will be needed to derive our Theorems B and D.

We will also need the following simple lemma.
\begin{lemma} \label{lem:analytic-preimage}
Let $f:\R^\ell\supset U\to\R$ be a non-constant real-analytic function. Then $\hdim(f^{-1}E)\le \ell-1+\hdim(E)$ for any set $E\subset\R$.
\end{lemma}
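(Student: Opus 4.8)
The plan is to prove the statement $\hdim(f^{-1}E)\le \ell-1+\hdim(E)$ by a slicing argument, covering $U$ by countably many pieces on which $f$ behaves like a nondegenerate coordinate. The key structural fact about a non-constant real-analytic function $f$ on a connected open set is that its zero set (and more generally the set where the full gradient vanishes to all orders) is ``thin''. First I would reduce to a local statement: since $U$ can be written as a countable union of open balls and Hausdorff dimension is countably stable ($\hdim \bigcup_n A_n = \sup_n \hdim A_n$), it suffices to bound $\hdim(f^{-1}E \cap B)$ for $B$ a small ball. On such a ball, provided $f$ is non-constant, some partial derivative $\partial f/\partial x_i$ is not identically zero, and after possibly shrinking and relabeling coordinates I may assume $\partial f/\partial x_\ell \neq 0$ on an open dense subset.

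The heart of the argument is the following foliation picture. On the open set where $\nabla f \neq 0$, the level sets $\{f = c\}$ are $(\ell-1)$-dimensional real-analytic submanifolds, and I would straighten them via the implicit function theorem: locally there is a bi-Lipschitz (indeed real-analytic with nonvanishing Jacobian) change of coordinates $\Phi$ such that $f\circ \Phi^{-1}(y_1,\ldots,y_\ell) = y_\ell$. In these coordinates $f^{-1}E$ becomes $\R^{\ell-1}\times E$ (intersected with the chart), a product set. The standard product inequality for Hausdorff dimension, $\hdim(A\times B)\le \hdim A + \hdim_{\mathsf{P}} B$ — or, more elementarily here, the bound $\hdim(\R^{\ell-1}\times E)\le (\ell-1)+\hdim E$, which holds because each slice $\R^{\ell-1}\times\{t\}$ contributes dimension $\ell-1$ and $E$ contributes its own dimension — gives exactly $\hdim \le \ell-1+\hdim E$ on this chart. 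Since bi-Lipschitz maps (and more generally locally Lipschitz maps, which do not increase Hausdorff dimension) preserve the bound, pulling back through $\Phi$ yields the estimate on the region where $\nabla f\neq 0$.

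The remaining issue, which I expect to be the main obstacle, is the \emph{critical set} $C=\{x\in U: \nabla f(x)=0\}$, where the straightening fails. Here real-analyticity is essential: I would argue by induction on $\ell$ (or on the number of vanishing derivatives) that the critical set, being itself contained in the common zero set of the non-constant analytic functions $\partial f/\partial x_i$, has Hausdorff dimension at most $\ell-1$, so that $f^{-1}E\cap C\subset C$ already satisfies $\hdim\le \ell-1\le \ell-1+\hdim E$ trivially. Concretely, if \emph{all} first partials vanished identically then $f$ would be constant on the connected component, contradicting the hypothesis; hence at least one $\partial f/\partial x_i$ is a non-constant analytic function whose zero set has dimension $\le \ell-1$, and $C$ sits inside it. Decomposing $U$ into the open set where $\nabla f\neq 0$ (handled by the foliation argument) and the lower-dimensional critical set (handled trivially), and using countable stability to patch the countably many charts together, completes the proof. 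The delicate point to get right is ensuring the change of coordinates is genuinely locally Lipschitz with a locally Lipschitz inverse so that it neither inflates the dimension of $f^{-1}E$ nor of its preimage; the nonvanishing of $\partial f/\partial x_\ell$ guarantees this via the analytic implicit function theorem.
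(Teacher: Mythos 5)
Your proof is correct and follows essentially the same route as the paper: split off the critical set of $f$ (which, by analyticity, has dimension at most $\ell-1$), linearize $f$ on the complement via the implicit function theorem, and apply the product formula for Hausdorff dimension to $E\times\R^{\ell-1}$. The paper's version is just a condensed form of exactly this argument.
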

\begin{proof}
Since $f$ is non-constant and real-analytic, its gradient vanishes on a  (possibly empty) countable union $S$ of analytic hypersurfaces. By the implicit function theorem, we can decompose $U\setminus S$ as a countable union $\bigcup_i U_i$, such that for each $i$ there is diffeomorphism $g_i:\R^\ell \supset V_i\to U_i$ that linearizes $f$ in the sense that $f g_i(x_1,\ldots,x_\ell)=x_1$. Since $f^{-1}(E)\subset S\cup \bigcup_i g_i(E\times\R^{\ell-1})$, the lemma follows. (We have used that $\hdim(E\times F)=\hdim E+\hdim F$ if $F$ has equal Hausdorff and box-counting dimensions, see \cite[Product formula 7.3]{Falconer03}.)
\end{proof}

\begin{proof}[Proof of Theorem \ref{mainthm:parametrized}]
Recall from Section \ref{subsec:decomposition-large-and-small-part} that, given $k\in\N$, the self-similar measure $\mu_{u,p}=\mu(\lam_u,a_u,p)$ can be decomposed as
\[
\mu_{u,p} =  \mu(\lam_u^k,a_u^{(k)},p^{(k)}) * \mu(\lam_u^k,a_u,p) =: \nu_{u,p,k} * \eta_{u,p,k},
\]
where $\nu_{u,p,k}$ is obtained by skipping every $k$-th digit, and $\eta_{u,p,k}$ is obtained by keeping only every $k$-th digit. By construction, the projection map $\Pi'_{u,k}$ for $\nu_{u,p,k}$ is related to the projection map $\Pi_u$ for $\mu_{u,p}$ in the following way: if $\iii=(i_1 i_2\ldots)$, with $i_j\in [m]^{k-1}$, then
\[
\Pi'_{u,k}(\iii) = \Pi_u(i_{1,1}\ldots i_{1,k-1}0 i_{2,1}\ldots i_{2,k-1}0\ldots).
\]
Hence the non-degeneracy assumption for the family $\{\mu_{u,p}\}_{u\in U}$ transfers to the family $\{ \nu_{u,p,k}\}_{u\in U}$, and we can apply Theorem \ref{thm:dimension-parametrized} and Lemma \ref{lem:sim-dim-large-part}  to deduce that there exists a set $E'_k\subset U$ of  Hausdorff dimension at most $\ell-1$, such that
\begin{equation} \label{eq:parametrized-large-dim}
\hdim(\nu_{u,p,k}) = \min\left(\left(1-1/k\right)s(\lam_u,m,p),1\right) \quad\text{for all } u\in U\setminus E'_k,\ p\in\PP_m.
\end{equation}
To handle the measures $\eta_{u,p,k}$, the argument differs depending on whether $u\mapsto\lam_u$ is constant or not. In the former case, we must have $m\ge 3$. Indeed, suppose $m=2$. If $|\lam|<1/2$, then $s(\lam,2,p)<1$ for any $p$ and there is nothing to do, while if $|\lam|\ge 1/2$, there are two sequences $\iii,\jjj\in\{0,1\}^\N$ such that $\sum_{n=0}^\infty (i_n-j_n)\lam^{n-1}=0$, and this implies that the non-degeneracy assumption fails for this pair of sequences. Hence we assume that $m\ge 3$ from now on. In this case, the function
\[
f(u)=\frac{a_3(u)-a_1(u)}{a_2(u)-a_1(u)}
\]
is non-constant and real-analytic outside of a countably union of hyper-surfaces of dimension $\ell-1$ (where the denominator vanishes). Otherwise, if either numerator or denominator were constant, the non-degeneracy condition would fail. Now let $F\subset (0,1)\times \R$ be the zero-dimensional exceptional set given by Proposition \ref{prop:EK-translations}, and let $F_k\subset \R$ be the $\lam^k$-slice of $F$. Finally, let $E''_k=f^{-1}(F_k)$. Then $\eta_{u,p,k}\in\mathcal{D}_1$ for all $u\in U\setminus E''_k$ and all $p\in\PP_m$. Moreover, it follows from Lemma \ref{lem:analytic-preimage} that $E''_k$ has Hausdorff dimension at most $\ell-1$.

In the case $\lam_u$ is not constant, we rely on  \cite[Theorem 1.5]{Watanabe12} (see also \cite[Proposition 2.3]{Shmerkin13}) and an analogous argument to also conclude that there is a set $E''_k\subset U$ of Hausdorff dimension at most $\ell-1$, such that $\eta_{u,p,k}\in\mathcal{D}_1$ for all $u\in U\setminus E''_k$ and all $p\in\PP_m$.

Let $E=\bigcup_{k\in\N} E'_k\cup E''_k$; then $\hdim(E)\le \ell-1$. Fix $u\in U\setminus E$ and $p\in\PP_m$ such that $s(\lam_u,m,p)>1$. It follows from \eqref{eq:parametrized-large-dim} that $\hdim \nu_{u,p,k}=1$ if we take $k$ large enough. We now conclude from Corollary \ref{cor:convolution-with-density-in-Lq} that $\mu_{u,p}=\nu_{u,p,k}*\eta_{u,p,k}$ is absolutely continuous with an $L^q$ density for some $q>1$.
\end{proof}

\subsection{Proof of Theorems \ref{mainthm:projection-measures} and \ref{mainthm:projection-sets}}
\label{subsec:proof-projections}

In this section we establish Theorem \ref{mainthm:projection-measures} and deduce Theorem \ref{mainthm:projection-sets} as a corollary. As suggested by the statement, there are two different cases depending on whether the rotational part of the similitude $\lam$ is rational or not. In the irrational case we can say more, thanks to the following result.

\begin{thm} \label{thm:projection-dim-irrational}

Let $\lam = r \exp(2\pi i \alpha)$, where $\alpha/\pi\notin\mathbb{Q}$ and $r\in (0,1)$, and let $m\in\N$, $a\in\R^{2m}$ be such that the measure $\mu=\mu(\lam,a,p)$ satisfies the strong separation condition. Assume further that $r^{q-1}\ge\sum_{i=1}^m p_i^q$ for some $q\in (1,2]$.
Then $D_q (P_\beta\mu)=1$ for all $\beta\in [0,\pi)$.
\end{thm}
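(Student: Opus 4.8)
The plan is to establish the lower bound $D_q(P_\beta\mu)\ge 1$ for every $\beta$; the matching upper bound is automatic, since any compactly supported measure on $\R$ has $L^q$ dimension at most $1$. Write $\rho_\beta=P_\beta\mu$, let $\phi$ denote the rotation angle of $\lam$ (so that $\phi/\pi\notin\Q$) and $R$ the rotation by $\phi$, and set $\omega_\gamma=(\cos\gamma,\sin\gamma)$. For $\iii\in [m]^N$ let $C_{\iii,\beta}=\la\omega_\beta,f_\iii(0)\ra$ be the projected position of the cylinder $\iii$. Since $\rho_\beta$ is the convolution of the atomic measure $\sum_{\iii\in [m]^N}p_\iii\,\delta_{C_{\iii,\beta}}$ with a tail of diameter $O(r^N)$, Lemma \ref{lem:equivalence-Lq-dim} reduces the problem (taking $r_N=r^N$, for which $\log r_{N+1}/\log r_N\to 1$) to showing that the moment sum $\int \rho_\beta(B(x,r^N))^{q-1}\,d\rho_\beta(x)$ is at most $(r^N)^{q-1}e^{o(N)}$.

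The crucial reduction, available precisely because $q-1\in(0,1]$, is to a \emph{pair} sum. Using $(\sum_k a_k)^{q-1}\le \sum_k a_k^{q-1}$ for $a_k\ge 0$, one gets
\[
\int \rho_\beta(B(x,r^N))^{q-1}\,d\rho_\beta(x)\le O(1)\sum_{\iii,\jjj\in [m]^N}p_\iii\,p_\jjj^{\,q-1}\,\mathbf{1}\!\left[\,|C_{\iii,\beta}-C_{\jjj,\beta}|\le r^N\,\right].
\]
I would organise this sum by the length $t$ of the common prefix $\kkk$ of $\iii$ and $\jjj$. Writing $\iii=\kkk\iii'$, $\jjj=\kkk\jjj'$ with $i'_1\ne j'_1$, homogeneity gives $C_{\iii,\beta}-C_{\jjj,\beta}=r^t\la\omega_{\beta-t\phi},w\ra$, where $w=f_{\iii'}(0)-f_{\jjj'}(0)$ satisfies $\delta_0\le |w|\le \diam$ by the strong separation condition. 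Thus a pair with common prefix $t$ is counted only when $\beta$ lies in a window of width $O(r^{N-t})$ centred at $t\phi+\arg w+\tfrac\pi2$, i.e. when $\omega_\beta$ is nearly orthogonal to $R^t w$.

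The diagonal and near-diagonal contributions are harmless: the prefix $\kkk$ of length $t$ contributes the factor $\sum_{\kkk\in [m]^t}p_\kkk^{\,q}=(\sum_j p_j^q)^t\le (r^{q-1})^t$ by the hypothesis $\sum_j p_j^q\le r^{q-1}$; in particular the term $t=N$ equals $\sum_\iii p_\iii^{\,q}\le (r^N)^{q-1}$. The real work is to bound the off-diagonal mass \emph{uniformly in} $\beta$, and this is the main obstacle. A purely per-scale treatment is too lossy: bounding the overlap multiplicity one step at a time only yields $D_q(\rho_\beta)\ge 1-O(r)$, since a positive proportion (of measure $\sim r$) of the rotated directions $\beta-t\phi$ are resonant. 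Instead one must exploit transversality across all scales simultaneously. Because $\phi/\pi$ is irrational, the angles $\{t\phi\}$ equidistribute, so for any fixed $\beta$ the rotated direction $\beta-t\phi$ cannot stay close to the (weighted) concentration directions of $\arg w$ for more than a controlled fraction of scales $t$; quantifying this shows that, summed over $t$ and over tails, the weighted pair count exceeds the diagonal bound only by a factor $e^{o(N)}$. This multi-scale, all-directions estimate is exactly the transversality argument of Nazarov--Peres--Shmerkin \cite{NPS12} --- carried out there for the symmetric weights $p_\iii p_\jjj$ of the correlation dimension --- which I would adapt to the asymmetric weights $p_\iii p_\jjj^{\,q-1}$ above. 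Granting it, the pair sum is $(r^N)^{q-1}e^{o(N)}$, so $D_q(\rho_\beta)\ge 1$ for every $\beta$, completing the proof.
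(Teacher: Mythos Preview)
Your reduction to a weighted pair sum via $(\sum a_k)^{q-1}\le\sum a_k^{q-1}$ and the organisation by common prefix length are both fine. The gap is at the closing step, which you defer to \cite{NPS12} as a ``multi-scale, all-directions transversality estimate''. That is not what \cite{NPS12} does, and I do not see how to close your argument directly. Equidistribution of $\{t\phi\bmod\pi\}$ carries no uniform rate: for Liouville $\phi$ the discrepancy over $t\le N$ can be essentially as bad as $o(N)$, and the ``resonant directions'' $\arg w+\tfrac\pi2$ are not a fixed finite set but vary with the tails $\iii',\jjj'$ at every scale. A per-scale or windowed counting argument therefore cannot by itself produce the uniform-in-$\beta$ bound $e^{o(N)}$ you need.

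The paper (and \cite{NPS12}) proceeds differently. One proves the cocycle inequality
\[
S_{k+\ell,q}(P_\beta\mu)\ \le\ C_q\, S_{k,q}(P_\beta\mu)\, S_{\ell,q}(P_{\beta+k\phi}\mu),
\]
so that $f_k(\beta)=\log S_{k,q}(P_\beta\mu)+\log C_q$ is a subadditive cocycle over the rotation by $\phi$ on the circle. Two soft facts then finish the job: the Hunt--Kaloshin projection theorem gives $D_q(P_\beta\mu)=1$ for Lebesgue-a.e.\ $\beta$ (this is where $q\le 2$ is used), and Furman's theorem on subadditive cocycles over uniquely ergodic transformations upgrades this to \emph{all} $\beta$ (this is where irrationality of $\phi/\pi$ enters, through unique ergodicity of the rotation --- no quantitative equidistribution is needed). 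Your pair-sum setup is in fact very close to how one would \emph{derive} the cocycle inequality; what is missing is precisely the passage through Furman's theorem, which replaces the unavailable quantitative bound by an abstract ergodic one.
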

\begin{proof}
This can be proved via a similar method to the proof of \cite[Theorem 1.1]{NPS12} (the current setting is in fact somewhat simpler). We sketch the proof, leaving the details to the interested reader. The key is the  inequality
\begin{equation} \label{subm}
S_{k+\ell,q}(P_\beta\mu) \le C_q\, S_{k,q}(P_\beta \mu)\, S_{\ell,q}(P_{\beta+k\alpha}\mu),
\end{equation}
for some constant $C_q>0$,
which implies that
$$
f_k(\beta):= \log S_{k,q}(P_\beta\mu)+\log C_q
$$
is a subadditive cocycle over the $\alpha$ rotation on the circle.
 The proof of (\ref{subm}) is similar to the analogous derivation of \cite[(3.1)]{NPS12} (where $q=2$), but simpler since in our case the irrational rotation arises in a geometrically transparent way. It is also closely related to the proof of Proposition \ref{prop:Lq-dim-submultiplicative} above.

Since $\mu$ satisfies the strong separation condition, $D_q\mu=\frac{\log\sum_{i=1}^m p_i^q}{(q-1)\log r}$. Hence the assumption $r^{q-1}\ge\sum_{i=1}^m p_i^q$ ensures that $D_q\mu\ge 1$.

The conclusion of the proof now follows exactly the argument in \cite{NPS12}. By a version of Marstrand's projection Theorem due to Hunt and Kaloshin \cite{HuntKaloshin97}, $D_q (P_\beta\mu)=1$ for almost all $\beta$ (this is where the hypothesis $q\le 2$ gets used). A result of Furman \cite[Theorem 1]{Furman97} on subadditive cocycles over uniquely ergodic transformations, applied to the cocycle $f_k$, yields that $D_q(P_\beta\mu) \ge 1$ for \emph{all} $\beta$ (this is the step that uses the irrationality of $\alpha/\pi$). Since the upper bound is trivial, this concludes the proof.
\end{proof}

\begin{rem}
 A version of the above theorem for Hausdorff dimension (rather than $L^q$ dimension) was proved for more general self-similar measures in \cite[Theorem 1.6]{HochmanShmerkin12}. The above strengthening is required to obtain the additional information on the densities in Theorem \ref{mainthm:projection-measures}, but if one is only interested in absolute continuity (which is enough to deduce Theorem \ref{mainthm:projection-sets}), then \cite[Theorem 1.6]{HochmanShmerkin12} suffices.
\end{rem}

We can now finish the proof of Theorem \ref{mainthm:projection-measures}.

\begin{proof}[Proof of Theorem \ref{mainthm:projection-measures}]
 Consider first the rational rotation case, $\alpha/\pi\in\mathbb{Q}$. By iterating the IFS, which does not affect the assumptions, we may and do assume that there is no rotation, that is, $\lam\in (0,1)$. We fix the vector of translations $a=(a_1,\ldots,a_m)$.

 Write $\mu_p=\mu(\lam,a,p)$. Then $P_\beta\mu_p = \mu(\lam,P_\beta a,p)$, where $P_\beta a=(P_\beta a_1,\ldots, P_\beta a_m)$. In particular, $P_\beta\mu_p$ is a self-similar measure of similarity dimension  $\hdim\mu_p$. Moreover, the family $(P_\beta\mu_p)_{\beta\in [0,\pi)}$ satisfies the non-degeneracy assumption of Theorem \ref{thm:dimension-parametrized}, since $\Pi_\beta = P_\beta\circ \Pi$, where $\Pi,\Pi_\beta$ are the projection maps for $\mu_p,P_\beta\mu_p$, and $\Pi$ is injective since the IFS generating $\mu_p$ satisfies the strong separation condition. The first claim, in the rational rotation case, then follows from Theorem \ref{mainthm:parametrized}.

 We now turn to the irrational rotation case, $\alpha/\pi\notin\mathbb{Q}$. For each $k\in\N$, decompose $\mu_p=\mu(\lam,a,p)=\nu_{p,k}*\eta_{p,k}$, where $\nu_{p,k}$ corresponds to skipping every $k$-th digit, and $\eta_{p,k}$ to keeping every $k$-th digit. An argument similar to that in Lemma \ref{lem:sim-dim-large-part} yields (using \eqref{eq:Lq-dim-ssm}) that
 \[
  D_q\nu_{p,k} = (1-1/k) D_q\mu_p.
 \]
 Since the assumption $r^{q-1}>\sum_{i=1}^m p_i^q$ says precisely that $D_q\mu_p>1$, we can fix a sufficiently large $k$ so that $D_q\nu_{p,k}>1$. We can therefore apply Theorem \ref{thm:projection-dim-irrational} to $\nu_{p,k}$ and conclude that $D_q(P_\beta\nu_{p,k})=1$ for \emph{all} $\beta$.

 On the other hand, it follows from Proposition \ref{prop:EK-projections} that there is a set $E_k\subset [0,\pi)$ with $\hdim E_k=0$, such that $P_\beta\eta_{p,k}\in\mathcal{D}_1$ for all $\beta\in [0,\pi)\setminus E_k$. Set $E=\bigcup_{k\in \N} E_k$.

 By linearity of convolving and projecting, $P_\beta\mu_p = P_\beta\nu_{p,k}*P_\beta\eta_{p,k}$. Theorem \ref{thm:abs-cont-convolution} now yields part (ii) of the theorem, and letting $q\searrow 1$ also part (i) in the irrational rotation case.
\end{proof}

\begin{proof}[Proof of Theorem \ref{mainthm:projection-sets}]
Let $A$ be a self-similar set on $\R^2$. We consider first the case $\hdim A>1$. Then $A$ contains a self-similar set $A'$ which satisfies the strong separation condition and still satisfies $\hdim A'>1$; this is folklore, see e.g. \cite[Lemma 3.4]{Orponen12}. In turn, $A'$ contains a self-similar subset $A''$ which is also homogeneous, see \cite[Proposition 6]{PeresShmerkin09} (although it is not stated explicitly, it is clear from the proof that the IFS generating $A''$ also satisfies the strong separation condition). The claim now follows by applying Theorem \ref{mainthm:projection-measures}(i) to the natural self-similar measure on $A''$.

For completeness we comment on the (essentially known) case $\hdim A\le 1$. When the IFS contains a scaled irrational rotation, it was proved in \cite[Theorem 5]{PeresShmerkin09} that there are no exceptions at all, i.e. $\hdim P_\beta A=\hdim A$ for all $\beta$. Otherwise, we can approximate from inside as above to reduce the statement to the case of a homogeneous, no-rotations, self-similar set with strong separation, which follows from \cite[Theorem 1.8]{Hochman13}.
\end{proof}

\subsection{Proof of Theorems \ref{mainthm:convolutions} and \ref{mainthm:convolutions-sets}}
\label{subsec:proof-convolutions}

The proof of our last two main theorems is not unlike those of Theorems \ref{mainthm:projection-measures} and \ref{mainthm:projection-sets}, except that there is a twist in the decomposition into ``large'' and ``Fourier decay'' parts. Here we need to appeal to Proposition \ref{prop:EK-convolutions} and the following result from \cite{NPS12}.

\begin{thm} \label{thm:convolution-ssm-dim}
Let $\mu_i=\mu(\lam_i,a_i,p_i)$ be two homogeneous self-similar measures on the real line satisfying the strong separation condition. Assume $\log\lam_2/\log\lam_1\notin\mathbb{Q}$. Then for any $q\in (1,2]$ and any $u\neq 0$,
\[
D_q(\mu_1* T_u\mu_2) = \min\left(D_q(\mu_1)+D_q(\mu_2) ,1\right).
\]
\end{thm}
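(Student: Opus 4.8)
The plan is to prove the two inequalities separately; the upper bound is routine and the lower bound carries all the content. For the upper bound, observe that $\mu_1 * T_u\mu_2$ is the image of the product measure $\mu_1 \times T_u\mu_2$ under the Lipschitz addition map $(x,y)\mapsto x+y$. Since $L^q$ dimension does not increase under Lipschitz maps, and the product formula gives $D_q(\mu_1\times T_u\mu_2)=D_q\mu_1+D_q\mu_2$, we obtain $D_q(\mu_1 * T_u\mu_2)\le D_q\mu_1+D_q\mu_2$; as any probability measure on $\R$ has $L^q$ dimension at most $1$, the bound $\le\min(D_q\mu_1+D_q\mu_2,1)$ follows.

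For the lower bound I would follow the template of the proof of Theorem \ref{thm:projection-dim-irrational}, renormalising the convolution at matched scales. Applying the IFS of $\mu_1$ to depth $k_1$ and that of $T_u\mu_2$ to depth $k_2$ with $\lam_1^{k_1}\approx\lam_2^{k_2}$, each product cylinder is a scaled, translated copy of $\mu_1 * T_{u'}\mu_2$ with
\[
u'=u\,\frac{\lam_2^{k_2}}{\lam_1^{k_1}},
\]
the point being that the \emph{relative} scale of the two factors shifts by the bounded factor $r=\lam_2^{k_2}/\lam_1^{k_1}=\Theta(1)$. Using strong separation to bound overlaps of cylinder images at comparable scales, together with Lemma \ref{lem:Holder-partitions}, exactly as in Proposition \ref{prop:Lq-dim-submultiplicative} and in the derivation of \cite[(3.1)]{NPS12}, this yields a submultiplicativity inequality
\[
S_{k+\ell,q}(\mu_1 * T_u\mu_2)\le C_q\, S_{k,q}(\mu_1 * T_u\mu_2)\, S_{\ell,q}(\mu_1 * T_{u'}\mu_2).
\]
Passing to logarithms, $f_k(\log u):=\log S_{k,q}(\mu_1*T_u\mu_2)+\log C_q$ becomes a subadditive cocycle over the map $\log u\mapsto\log u+(k_2\log\lam_2-k_1\log\lam_1)$; taking $k_1=1$ with the matching $k_2$, this is, up to normalisation, rotation by $|\log\lam_1|$ on the circle $\R/|\log\lam_2|\Z$. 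This is where the hypothesis enters: the rotation is irrational, hence minimal and uniquely ergodic, precisely because $\log\lam_2/\log\lam_1\notin\mathbb{Q}$.

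It then remains to identify the cocycle limit and transfer it to every $u$, exactly as in Theorem \ref{thm:projection-dim-irrational}. By the Hunt--Kaloshin version of Marstrand's theorem for $L^q$ dimensions \cite{HuntKaloshin97} (this is where $q\le 2$ is used), applied to $\mu_1\times T_u\mu_2$, one has $D_q(\mu_1*T_u\mu_2)=\min(D_q\mu_1+D_q\mu_2,1)$ for Lebesgue-almost every $u$, where $D_q\mu_i$ is given by \eqref{eq:Lq-dim-ssm}. On the other hand, Furman's theorem on subadditive cocycles over uniquely ergodic transformations \cite{Furman97} guarantees that $\frac1k f_k(\log u)$ converges to a limit \emph{independent} of $u$; since $D_q(\mu_1*T_u\mu_2)$ is a fixed affine function of this limit, it must equal the almost-everywhere value $\min(D_q\mu_1+D_q\mu_2,1)$ for \emph{every} $u\ne 0$. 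This matches the upper bound and completes the proof.

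The main obstacle is the submultiplicativity inequality with the drifting parameter $u'$. Unlike the planar self-similar situation of Theorem \ref{thm:projection-dim-irrational}, where iterating the IFS produces exact rescalings and rotations, here the two factors contract at incommensurate rates, so one must iterate them to \emph{different} depths and carry along the bounded-but-nontrivial scale discrepancy $r=\Theta(1)$, the same phenomenon encountered in Lemma \ref{lem:EK-convolutions}. The careful bookkeeping needed to keep the two cylinder families at comparable dyadic scales, to invoke strong separation so that Lemma \ref{lem:Holder-partitions} applies with uniform multiplicity, and to verify that the resulting sequence is genuinely subadditive over the irrational rotation, is the technical heart of the argument; once it is in place, the ergodic-theoretic conclusion is identical to the projection case.
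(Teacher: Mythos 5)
Your proposal is correct and follows essentially the same route as the paper, which simply observes that the key cocycle estimate \cite[(3.1)]{NPS12} extends to this setting and then runs the argument of \cite[Theorem 1.1]{NPS12} --- i.e.\ exactly the submultiplicativity-with-drifting-parameter, Hunt--Kaloshin almost-everywhere statement, and Furman subadditive-cocycle transfer that you describe. The only nitpick is that Furman's theorem gives a one-sided bound $\limsup_k \frac1k f_k \le \lim_k \frac1k \int f_k$ at \emph{every} point rather than convergence to a constant, but since $(1-q)<0$ this inequality points in the direction needed for the lower bound on $D_q$, which is precisely how the paper uses it in the proof of Theorem \ref{thm:projection-dim-irrational}.
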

A particular case of the above is \cite[Theorem 5.1]{NPS12} (which corresponds to the case where $\mu_1, \mu_2$ are natural measures on central Cantor sets). However, applying the remarks on the biased case and on general self-similar measures in \cite[Section 5]{NPS12}, the proof easily extends to this generality. To be more precise, the key cocycle estimate \cite[(3.1)]{NPS12} continues to hold in this generality (the proof of this requires only notational changes) and the conclusion of the proof from here is identical to that of \cite[Theorem 1.1]{NPS12}.

\begin{proof}[Proof of Theorem \ref{mainthm:projection-measures}]
Fix $\lam_i,m_i,a_i$ as in the statement. We consider the cases $\log |\lam_2|/\log|\lam_1|\in\mathbb{Q}$ and $\log |\lam_2|/\log|\lam_1|\notin\mathbb{Q}$ separately. We start with the rational case. In this case, there are integers $h_1, h_2$ such that $\lam_1^{h_1}=\lam_2^{h_2}>0$, so that, after iterating each IFS a suitable number of times (which does not affect the hypotheses of the theorem), we can assume $\lam_1=\lam_2>0$. This implies that, for any choice of $p_i\in\PP_{m_i}$,
\[
\mu(\lam,a_1,p_1) \times \mu(\lam,a_2,p_2) = \mu(\lam,\widetilde{a},\widetilde{p}),
\]
where $\widetilde{a}\in\R^{m_1 m_2}$ is defined (after relabeling) by $\widetilde{a}_{ij}=(a_i,a_j)$, and $\widetilde{p}\in\PP_{m_1 m_2}$ is given by $\widetilde{p}_{ij}=p_i p_j$. Since the family of convolutions $\mu(\lam,a_1,p_1) * T_u \mu(\lam,a_2,p_2)$ is, after smooth reparametrization, the family of orthogonal projections of $\mu(\lam,\widetilde{a},\widetilde{p})$ in non-principal directions, the claim (i) in the rational case follows from Theorem \ref{mainthm:parametrized} as in the proof of Theorem \ref{mainthm:projection-measures}.

Assume now that $\log|\lam_2|/\log|\lam_1|\notin\mathbb{Q}$. After iterating, we may assume that $\lam_i>0$. For simplicity, write $\mu^i_{p_i}=\mu(\lam_i,a_i,p_i)$. For a given $k$, we will decompose \emph{both} $\mu^1_{p_1}$ and $\mu^2_{p_2}$ as
\[
\mu^i_{p_i} = \nu^i_{p_i,k} * \eta^i_{p_i,k},
\]
where as usual $\nu^i_{p_i,k}$ is obtained from $\mu^i_{p_i}$ by skipping every $k$-th digit, and $\eta^i_{p_i,k}$ by keeping every $k$-th digit. By linearity of convolution,
\[
\mu^1_{p_1} * T_u \mu^2_{p_2} = \left(\nu^1_{p_1,k} * T_u \nu^2_{p_2,k} \right) * \left(\eta^1_{p_1,k}* T_u \eta^2_{p_2,k}\right)
\]
It follows from \eqref{eq:Lq-dim-ssm} and the assumption \eqref{eq:sum-of-corr-dims-g1} that if $k$ is large enough, then
\[
D_q(\nu^1_{p_1,k})+ D_q(\nu^2_{p_2,k}) > 1.
\]
Thus Theorem \ref{thm:convolution-ssm-dim} yields that $D_q(\nu^1_{p_1,k} * T_u \nu^2_{p_2,k})=1$ for all $u\neq 0$ (here we use the assumption $q\le 2$).

To handle $\eta^1_{p_1,k}* T_u \eta^2_{p_2,k}$, we appeal to Proposition \ref{prop:EK-convolutions}, which guarantees that these measures are in $\mathcal{D}_1$ for all $u$ outside of a set $E_k$ of zero Hausdorff dimension (independent of $p_1,p_2$). Set $E=\bigcup_k E_k$. The proof of part (ii) is now concluded in view of Theorem \ref{thm:abs-cont-convolution}, and part (i) in the irrational case follows by letting $q\searrow 1$.
\end{proof}

We conclude the paper with the short derivation of Theorem \ref{mainthm:convolutions-sets}.
\begin{proof}[Proof of Theorem \ref{mainthm:convolutions-sets}]
Let $A_1, A_2$ be as in the statement. As in the proof of Theorem \ref{mainthm:projection-sets}, it follows e.g. from \cite[Lemma 3.4]{Orponen12} and \cite[Proposition 6]{PeresShmerkin09} that there are self-similar sets $A'_i\subset A_i$ ($i=1,2$), which are homogeneous and satisfy the strong separation condition, and of dimensions arbitrarily close to those of $A_i$; in particular, we may assume $\hdim A'_1+\hdim A'_2>1$. The theorem now follows at once from Theorem \ref{mainthm:convolutions} applied to the natural self-similar measures on $A'_1,A'_2$.
\end{proof}

%\bibliographystyle{plain}
%\bibliography{abscontinuity}

\begin{thebibliography}{10}

\bibitem{DFW07}
Xin-Rong Dai, De-Jun Feng, and Yang Wang.
\newblock Refinable functions with non-integer dilations.
\newblock {\em J. Funct. Anal.}, 250(1):1--20, 2007.

\bibitem{Edgar98}
Gerald~A. Edgar.
\newblock {\em Integral, probability, and fractal measures}.
\newblock Springer-Verlag, New York, 1998.

\bibitem{Erdos39}
Paul Erd\H{o}s.
\newblock On a family of symmetric {B}ernoulli convolutions.
\newblock {\em Amer. J. Math.}, 61:974--976, 1939.

\bibitem{Falconer03}
Kenneth Falconer.
\newblock {\em Fractal geometry. Mathematical foundations and applications}.
\newblock John Wiley \& Sons Inc., Hoboken, NJ, second edition, 2003.

\bibitem{FalconerJin14}
Kenneth Falconer and Xiong Jin.
\newblock Projections and dimension conservation for random self-similar
  measures and sets.
\newblock {\em Preprint}, 2014.
\newblock Available at \texttt{http://arxiv.org/abs/1212.1345v3}.

\bibitem{FLR02}
Ai-Hua Fan, Ka-Sing Lau, and Hui Rao.
\newblock Relationships between different dimensions of a measure.
\newblock {\em Monatsh. Math.}, 135(3):191--201, 2002.

\bibitem{Farkas14}
\'{A}bel Farkas.
\newblock Projections and other images of self-similar sets with no separation
  condition.
\newblock {\em Preprint}, 2014.
\newblock Available at \texttt{http://arxiv.org/abs/1307.2841v2}.

\bibitem{FengHu09}
De-Jun Feng and Huyi Hu.
\newblock Dimension theory of iterated function systems.
\newblock {\em Comm. Pure Appl. Math.}, 62(11):1435--1500, 2009.

\bibitem{FraserShmerkin14}
Jonathan Fraser and Pablo Shmerkin.
\newblock On the dimensions of a family of overlapping self-affine carpets.
\newblock {\em Prepint}, 2014.

\bibitem{Furman97}
Alex Furman.
\newblock On the multiplicative ergodic theorem for uniquely ergodic systems.
\newblock {\em Ann. Inst. H. Poincar\'e Probab. Statist.}, 33(6):797--815,
  1997.

\bibitem{Hochman13}
Michael Hochman.
\newblock On self-similar sets with overlaps and inverse theorems for entropy.
\newblock {\em Ann. of Math. (2)}, to appear, 2013.
\newblock Available at \texttt{http://arxiv.org/abs/1212.1873}.

\bibitem{Hochman14}
Michael Hochman.
\newblock On self-similar sets with overlaps and sumset phenomena for entropy
  in $\mathbb{R}^d$.
\newblock {\em In preparation}, 2014.

\bibitem{HochmanShmerkin12}
Michael Hochman and Pablo Shmerkin.
\newblock Local entropy averages and projections of fractal measures.
\newblock {\em Ann. of Math. (2)}, 175(3):1001--1059, 2012.

\bibitem{HuTaylor94}
Xiaoyu Hu and S.~James Taylor.
\newblock Fractal properties of products and projections of measures in {${\bf
  R}^d$}.
\newblock {\em Math. Proc. Cambridge Philos. Soc.}, 115(3):527--544, 1994.

\bibitem{HuntKaloshin97}
Brian~R. Hunt and Vadim~Yu. Kaloshin.
\newblock How projections affect the dimension spectrum of fractal measures.
\newblock {\em Nonlinearity}, 10(5):1031--1046, 1997.

\bibitem{Kahane71}
J.-P. Kahane.
\newblock Sur la distribution de certaines s\'eries al\'eatoires.
\newblock In {\em Colloque de {T}h\'eorie des {N}ombres ({U}niv. {B}ordeaux,
  {B}ordeaux, 1969)}, pages 119--122. Bull. Soc. Math. France, M\'em. No. 25,
  Soc. Math. France Paris, 1971.

\bibitem{Mattila95}
Pertti Mattila.
\newblock {\em Geometry of sets and measures in {E}uclidean spaces}, volume~44
  of {\em Cambridge Studies in Advanced Mathematics}.
\newblock Cambridge University Press, Cambridge, 1995.

\bibitem{NPS12}
Fedor Nazarov, Yuval Peres, and Pablo Shmerkin.
\newblock Convolutions of {C}antor measures without resonance.
\newblock {\em Israel J. Math.}, 187:93--116, 2012.

\bibitem{Orponen12}
Tuomas Orponen.
\newblock On the distance sets of self-similar sets.
\newblock {\em Nonlinearity}, 25(6):1919--1929, 2012.

\bibitem{PeresSchlag00}
Yuval Peres and Wilhelm Schlag.
\newblock Smoothness of projections, {B}ernoulli convolutions, and the
  dimension of exceptions.
\newblock {\em Duke Math. J.}, 102(2):193--251, 2000.

\bibitem{PSS00}
Yuval Peres, Wilhelm Schlag, and Boris Solomyak.
\newblock Sixty years of {B}ernoulli convolutions.
\newblock In {\em Fractal geometry and stochastics, {II}
  ({G}reifswald/{K}oserow, 1998)}, volume~46 of {\em Progr. Probab.}, pages
  39--65. Birkh\"auser, Basel, 2000.

\bibitem{PeresShmerkin09}
Yuval Peres and Pablo Shmerkin.
\newblock Resonance between {C}antor sets.
\newblock {\em Ergodic Theory Dynam. Systems}, 29(1):201--221, 2009.

\bibitem{PeresSolomyak00}
Yuval Peres and Boris Solomyak.
\newblock Existence of {$L\sp q$} dimensions and entropy dimension for
  self-conformal measures.
\newblock {\em Indiana Univ. Math. J.}, 49(4):1603--1621, 2000.

\bibitem{Riedi93}
Rolf Riedi.
\newblock An improved multifractal formalism and self-similar measures.
\newblock {\em J. Math. Anal. Appl.}, 189(2):462--490, 1995.

\bibitem{Shmerkin13}
Pablo Shmerkin.
\newblock On the exceptional set for absolute continuity of bernoulli
  convolutions.
\newblock {\em Geometric and Functional Analysis (GAFA)}, to appear, 2013.
\newblock Available at \texttt{http://arxiv.org/abs/1303.3992v2}.

\bibitem{Solomyak95}
Boris Solomyak.
\newblock On the random series {$\sum\pm\lambda^n$} (an {E}rd{\H o}s problem).
\newblock {\em Ann. of Math. (2)}, 142(3):611--625, 1995.

\bibitem{SolomyakXu03}
Boris Solomyak and Hui Xu.
\newblock On the `{M}andelbrot set' for a pair of linear maps and complex
  {B}ernoulli convolutions.
\newblock {\em Nonlinearity}, 16(5):1733--1749, 2003.

\bibitem{Watanabe12}
Toshiro Watanabe.
\newblock Asymptotic properties of {F}ourier transforms of {$b$}-decomposable
  distributions.
\newblock {\em J. Fourier Anal. Appl.}, 18(4):803--827, 2012.

\end{thebibliography}

\end{document}